\documentclass[10pt,a4paper,reqno]{amsart}
\usepackage[latin1]{inputenc}
\usepackage{amsmath,enumerate,amsthm,graphicx,color,verbatim}
\usepackage{amsfonts,amscd}
\usepackage{latexsym}
\usepackage{amssymb}
\usepackage{breqn}
\newtheorem{thm}{Theorem}

\newtheorem{defn}{Definition}

\newtheorem{lemma}{Lemma}

\newtheorem{prop}{Proposition}

\newcommand{\Z}{\mathbb{Z}}

\newcommand{\R}{\mathbb{R}}

\newcommand{\C}{\mathbb{C}}

\newcommand{\norm}{\vert\vert}
\definecolor{orange}{rgb}{1,0.5,0}
\DeclareMathOperator*{\wlim}{w-lim}
\begin{document}
\subjclass[2010]{35J10,34L40,47B36}
\author{Milivoje Lukic and Darren C. Ong}
\title[Perturbations of Periodic Schr\"odinger Operators]{Wigner-von Neumann type perturbations of periodic Schr\"odinger Operators}
\begin{abstract}
We consider decaying oscillatory perturbations of periodic \\ Schr\"odinger operators on the half line. More precisely, the perturbations we study satisfy a generalized bounded variation condition at  infinity and an $L^p$ decay condition. We show that the absolutely continuous spectrum is preserved, and give bounds on the Hausdorff dimension of the singular part of the resulting perturbed measure. Under additional assumptions, we instead show that the singular part embedded in the essential spectrum is contained in an explicit countable set. Finally, we demonstrate that this explicit countable set is optimal. That is, for every point in this set there is an open and dense class of periodic Schr\"odinger operators for which an appropriate perturbation will result in the spectrum having an embedded eigenvalue at that point.
\end{abstract}
\thanks{D.O. was supported in part by NSF grant DMS--1067988}
\maketitle

\begin{section}{Introduction}
We consider real solutions $u$ of a Schr\"odinger equation
\begin{equation}\label{perturbation}
Hu:=-u''+(V(x)+V_0(x))u=Eu,
\end{equation}
where $V_0(x)$ is $1$-periodic and $V(x)$ is a decaying and  oscillatory perturbation.

Given a choice of boundary condition $\theta\in [0,\pi)$, our operator $H$ has domain
\begin{align*}
D(H)=&\{u\in L^2(0,\infty) \vert u,u'\in AC_{loc}, -u''+(V+V_0)u\in L^2,\\
& u'(0)\sin(\theta)=u(0)\cos(\theta)\}.
\end{align*}
The operator $H$ is self-adjoint, has $0$ as a regular point and is limit point at $+\infty$.

For $z\in \C$ with $\mathrm{Im} z>0$, there is a nontrivial solution of -$u''_z+(V+V_0)u_z=zu_z$ which is $L^2$ near $+\infty$. We then define the $m$-function corresponding to $H$
\[m(z)=\frac{u_z'(0)\cos(\theta)+u_z(0)\sin(\theta)}{u_z(0)\cos(\theta)-u_z'(0)\sin(\theta)},\]
and the canonical spectral measure
\[d\mu=\frac{1}{\pi}\wlim_{\epsilon\downarrow 0}m(x+i\epsilon)dx.\]

  We give $V$ as

\begin{equation}\label{Vdef}
V(x)=\sum_{l=1}^\infty c_le^{-i\phi_l x}\gamma_l(x),
\end{equation}
 where the $c_l, \phi_l,\gamma_l$ are chosen so $V(x)$ is real, and so that it obeys the conditions of Theorem 1.1 of \cite{Lukic-infinite}. That is,
 
 \begin{enumerate}[(i)]
\item the $\gamma_l(x)$ are functions of uniformly bounded variation in $l$: in other words,
\begin{equation} \label{boundedvar}
\sup_l \mathrm{Var} (\gamma_l, (0,\infty))<\infty.
\end{equation}
\item for some $p\in \Z$, $p\geq 2$,
\begin{equation}\label{Lp}
\sup_l\int_0^\infty  \vert \gamma_l(x)\vert  ^p dx<\infty.
\end{equation}
\item for some $\mathfrak a\in \left(0, \frac{1}{p-1}\right)$,
\begin{equation}\label{alphacondition}
\sum_{l=1}^\infty \vert c_l\vert ^{\mathfrak a}<\infty.
\end{equation}
\end{enumerate}
We refer to these decaying oscillatory perturbations as Wigner-von Neumann type perturbations based on the Wigner von Neumann potential function on $(0,\infty)$:
 \[ V(x)=-8\frac{\sin(2x)}{x}+O(x^{-2}), x\to\infty.\]
 Historically, this potential was interesting because the corresponding Schr\"odinger operator had the unexpected property that there is an eigenvalue at $+1$ embedded in the absolutely continuous spectrum. 

Let us first discuss solutions of the unperturbed Schr\"odinger equation
\begin{equation}\label{unperturbed}
H_0\varphi:=-\varphi''+V_0(x)\varphi=E\varphi,
\end{equation}
where $\varphi$ is the Floquet solution. We express $\varphi$ as $p(x)e^{ikx}$, where $k$ is the quasimomentum. We also write $\varpi(x)=\mathrm{Arg}$  $ p(x)$. When we wish to emphasize the dependence of $k$ on $E$, we will write $k_E$. It is known that the essential spectrum of a periodic Schr\"odinger operator consists of a union of absolutely continuous closed intervals (often referred to as ``bands"). Any two of those bands can intersect at most at a point. Additionally, by Weyl's theorem, $\sigma_{ess}(H)=\sigma_{ess}(H_0)$.

Let us define the set $S$ so that
\[ S=\{ E\in\mathrm{Int}(\sigma_{ess}(H))\vert \text{not all solutions of (\ref{perturbation}) are bounded}\}.\]
Due to standard results in spectral theory (\cite{Gilbert-Pearson},\cite{Behncke},\cite{Stolz}), we know that $\mu$ is mutually absolutely continuous with Lebesgue measure on $\mathrm{Int}(\sigma_{ess}(H))\setminus S$.

Our theorem for the finite frequency case (i.e. all but finitely many of the $c_l$s are zero) is given as
\begin{thm}\label{finitefreqthm}
Considering  $V$ chosen so that (\ref{Vdef}) is a finite sum, $S$ is contained in
\begin{equation}\label{finitefreqset}\left\{ E\in \mathrm{Int}(\sigma_{ess}(H)) \Bigg| 
\pm 2k_E\in \bigcup_{l=1}^{p-1} A\oplus \ldots\oplus A\mod 2\pi \text{(sum of $l$ $A$'s mod $2\pi$)}\right\},
\end{equation}
where $A$ is the set of all $\phi_l$s. This implies that each band of $\sigma_{ess}(H)$ has at most finitely many embedded pure points, that $H$ has no singular continuous spectrum, and that the
absolutely continuous spectrum of $H$ is equal to $\sigma_{ess}(H).$
\end{thm}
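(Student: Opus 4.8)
The plan is to reduce the determination of $S$ to the asymptotic behavior of a first-order linear system, and then to run the transformation-theoretic machinery behind Theorem 1.1 of \cite{Lukic-infinite}, whose hypotheses $V$ is assumed to satisfy; the only genuinely new feature is the Fourier structure of the periodic Floquet factors, which accounts for the ``$\bmod 2\pi$'' in \eqref{finitefreqset}. Fix $E$ in the interior of a band of $\sigma_{ess}(H)=\sigma_{ess}(H_0)$, so that $k_E$ is real and \eqref{unperturbed} has a conjugate pair of Floquet solutions $\varphi_\pm(x)=p_\pm(x)e^{\pm ik_Ex}$ with $p_\pm$ $1$-periodic; since $\varphi_+$ and $\varphi_-$ are linearly independent, neither $\varphi_\pm$ nor $\varphi_\pm'$ can vanish, so $|\varphi_\pm|$ and $|\varphi_\pm'|$ are bounded above and bounded below by positive constants. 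Writing a solution $u$ of \eqref{perturbation} as $u=a\varphi_++b\varphi_-$ with the variation-of-parameters normalization $a'\varphi_++b'\varphi_-=0$ yields
\[
\binom{a}{b}'=\frac{V(x)}{W}\begin{pmatrix}\varphi_+\varphi_- & \varphi_-^2\\ -\varphi_+^2 & -\varphi_+\varphi_-\end{pmatrix}\binom{a}{b},
\]
with $W$ the (constant, nonzero) Wronskian. Since $q=V+V_0$ is bounded and $\varphi_\pm,\varphi_\pm'$ are bounded above and below, all solutions $u$ of \eqref{perturbation} are bounded if and only if all solutions $(a,b)$ of this system are bounded, so it suffices to decide when the latter holds.

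Next I would expand this coefficient matrix into its elementary oscillations. In the finite-frequency case $V=\sum_l c_le^{-i\phi_lx}\gamma_l$ is a finite sum with each $\gamma_l\in L^p$ of uniformly bounded variation, and since $V$ is real the frequency set $A=\{\phi_l\}$ is symmetric; moreover $\varphi_+\varphi_-=|p_+|^2$ is a strictly positive $1$-periodic function and $\varphi_\pm^2=p_\pm^2e^{\pm2ik_Ex}$, while the $1$-periodic factors $|p_+|^2$, $p_\pm^2$ are real-analytic, hence have absolutely (indeed rapidly) convergent Fourier series $\sum_{n\in\Z}(\,\cdot\,)_ne^{2\pi inx}$. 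Multiplying everything out, each entry of the coefficient matrix becomes an absolutely convergent superposition of terms $\gamma(x)e^{i\omega x}$ with $\gamma\in L^p$ of uniformly bounded variation and frequency $\omega$ ranging over $\pm\phi_l+2\pi n$ on the diagonal and $\pm2k_E\pm\phi_l+2\pi n$ off the diagonal ($l$ in the finite index set, $n\in\Z$). This is precisely the normal form to which the iterative scheme of \cite{Lukic-infinite} applies.

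The heart of the argument is then the iterative removal of non-resonant oscillations: a sequence of transformations, the $j$-th of the form $Y_{j-1}=(I+Q_j(x))Y_j$ with $Q_j(x)\to0$ (and $Y_0=\binom{a}{b}$), each of which replaces the current coefficient matrix by one in which every term of nonzero frequency has been integrated away. The uniform bounded variation of the amplitudes keeps each $Q_j$ bounded with controlled boundary contributions, while the $L^p$ bound forces the new, product-type error terms to gain integrability; tracking the exponents, at most $p-1$ such steps bring the coefficient matrix into $L^1(0,\infty)$. The frequencies created at step $j$ are sums of at most $j$ of the original frequencies, so for an off-diagonal entry they have the form $\pm2k_E+(\phi_{l_1}+\cdots+\phi_{l_j})+2\pi n$. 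Now suppose $E$ is \emph{not} in the set \eqref{finitefreqset}. Then for every $j\le p-1$ none of these off-diagonal frequencies vanishes; the diagonal frequencies $\pm\phi_l+2\pi n$ yield at most an $E$-independent resonance, which is excluded by the standing hypothesis (no $\phi_l\in2\pi\Z$) and in any case would not enlarge the $E$-dependent set $S$. Hence every oscillation can be removed, and after at most $p-1$ steps one reaches a system $Y'=R(x)Y$ with $R\in L^1(0,\infty)$ and no diagonal part; by a Levinson-type theorem (or simply by a Gr\"onwall estimate) every solution $Y$ then has a limit at $+\infty$ and is in particular bounded. Unwinding the transformations, every $(a,b)$, hence every solution $u$ of \eqref{perturbation}, is bounded, so $E\notin S$. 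Thus $S$ is contained in the set \eqref{finitefreqset}.

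Finally, the stated consequences are immediate. On the interior of each band $E\mapsto k_E$ is a real-analytic, strictly monotone bijection onto an interval; the set $\{k:\pm2k\in\bigcup_{l=1}^{p-1}A\oplus\cdots\oplus A\ (\mathrm{mod}\ 2\pi)\}$ is $2\pi$-periodic with only finitely many points per period (it is built from the finite set $A$ by finitely many finite sumsets), so it meets the bounded interval of $k$-values of one band in a finite set; hence each band contains at most finitely many points of $S$, and \eqref{finitefreqset} is countable. Since, by the subordinacy results of \cite{Gilbert-Pearson,Behncke,Stolz}, $\mu$ is mutually absolutely continuous with Lebesgue measure on $\mathrm{Int}(\sigma_{ess}(H))\setminus S$, the absolutely continuous part of $\mu$ has essential support all of $\sigma_{ess}(H)$, so $\sigma_{ac}(H)=\sigma_{ess}(H)$; a singular continuous measure carries no mass on a countable set, so $H$ has no singular continuous spectrum; and every embedded eigenvalue must lie in $S$, so each band carries only finitely many of them. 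The genuine obstacle I expect is the bookkeeping inside the iteration: identifying exactly which frequencies appear at each of the $p-1$ levels and verifying that $p-1$ levels suffice to reach $L^1$. This is the technical core inherited from \cite{Lukic-infinite}, to be carried out here in the presence of the extra copy of $2\pi\Z$ contributed by the periodic Floquet factors; one also needs enough uniformity in $E$ to guarantee that the exceptional set is no larger than \eqref{finitefreqset}.
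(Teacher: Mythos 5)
Your argument takes a genuinely different route from the paper. The paper works with modified Pr\"ufer variables $R,\eta$ (\`a la Kiselev--Remling--Simon, continuing the scheme of \cite{Lukic-continuous,Lukic-infinite}), derives the nonlinear equations \eqref{R}--\eqref{eta} for $\ln R$ and $\eta$, and removes oscillations by the recursion for $f_{J,K}, g_{J,K}$ and the operator $\lambda_{\alpha,K}$; Theorem~\ref{finitefreqthm} is then a one-line corollary of Lemma~\ref{mainlemma} and the observation that in the finite-frequency case the small-divisor condition \eqref{smalldivisor} can only fail on the set \eqref{finitefreqset}. You instead linearize via variation of parameters in the Floquet basis and run a Harris--Lutz/Levinson iteration $Y_{j-1}=(I+Q_j)Y_j$. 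Both are recognizable strategies for Wigner--von Neumann perturbations, and your reduction to the $(a,b)$ system and your Fourier bookkeeping of the frequencies $\pm 2k_E+\sum\phi_{l_i}+2\pi n$ correctly reproduce the resonance set \eqref{finitefreqset}.

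There is, however, a concrete gap in the step where you conclude that ``after at most $p-1$ steps one reaches a system $Y'=R(x)Y$ with $R\in L^1(0,\infty)$ and no diagonal part.'' Diagonal (zero-frequency) resonances cannot be avoided: since $V$ is real the phase set $A$ is symmetric, so already at the second stage the diagonal acquires amplitudes at frequency $\phi_l+(-\phi_l)+2\pi\cdot 0=0$, and these are of order $|V|^2\notin L^1$. Your statement that the diagonal resonances are ``excluded by the standing hypothesis (no $\phi_l\in 2\pi\Z$)'' does not help (that hypothesis is not made in the paper, and even if imposed it only kills the step-one diagonal resonance). So the transformed system is \emph{not} $L^1$; Levinson's theorem does not apply directly, and boundedness of $Y$ does not follow as stated. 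To close this you would have to extract the accumulated diagonal part and prove it is purely imaginary (so its exponential is unimodular) --- plausible from the Hamiltonian structure of the system, but it requires a real argument about how the transformations interact with the symplectic form. The paper sidesteps this entirely: in the proof of Lemma~\ref{mainlemma} the uncontrolled $K=0$ terms $\sum_j\mathcal S_{j,0}$ are not shown to be integrable at all; they are observed to be independent of $R$ and $\eta$, and boundedness is obtained by \emph{comparing two solutions} (so the $B(b)$ term cancels) and invoking the constancy of the Wronskian. This comparison-of-solutions device is the essential point that your write-up is missing; without it, or without an explicit proof that the residual diagonal is imaginary, the claimed boundedness on the complement of \eqref{finitefreqset} is not established.

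The remaining deductions in your last paragraph (finiteness per band, absence of singular continuous spectrum, $\sigma_{ac}(H)=\sigma_{ess}(H)$ via the subordinacy results) are correct and match the paper.
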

Conversely, we can also say
\begin{thm}\label{embeddedthm}
Fix $p$ and the size of $A=\{\phi_l\}$. For any choice of $A$ away from an
algebraic set of codimension $1$, if we fix an energy $E$ and a
quasimomentum $k_E$ in (\ref{finitefreqset}), among all the $V_0 \in
L^1_{loc}((0,1))$ for which $k_E$ corresponds to $E$, there is an open
and dense set of $V_0$ such that there is a choice of $V$ for which
the spectrum of $H$ has an embedded pure point at $E$.
\end{thm}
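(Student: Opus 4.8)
The plan is to exhibit the embedded eigenfunction by a direct construction: choose the frequencies of $V$ among the $\phi_l\in A$ that resonate with $\pm 2k_E$, and then tune the coupling constants and the envelopes $\gamma_l$ so that the Weyl solution of (\ref{perturbation}) at energy $E$ lies in $L^2(0,\infty)$. It is convenient to pass to modulated Floquet variables. Fix $E,k_E$ as in (\ref{finitefreqset}), write $\varphi_E=p\,e^{ik_E x}$ for the Floquet solution of (\ref{unperturbed}) (so $p$ is $1$-periodic, and $\varphi_E,\overline{\varphi_E}$ are linearly independent since $E\in\mathrm{Int}(\sigma_{ess}(H_0))$), and let $W=W(\varphi_E,\overline{\varphi_E})$, a nonzero constant that is purely imaginary since $\overline W=-W$. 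Writing a solution of (\ref{perturbation}) as $u=\alpha\varphi_E+\beta\overline{\varphi_E}$ subject to $\alpha'\varphi_E+\beta'\overline{\varphi_E}=0$, variation of parameters yields the trace-free first-order system
\begin{equation*}
\begin{pmatrix}\alpha\\\beta\end{pmatrix}'=\frac{V(x)}{W}\begin{pmatrix}-|p|^2 & -\overline{p}^2 e^{-2ik_E x}\\ p^2 e^{2ik_E x} & |p|^2\end{pmatrix}\begin{pmatrix}\alpha\\\beta\end{pmatrix}.
\end{equation*}
Since $V_0+V$ decays, $\sigma_{ess}(H)=\sigma_{ess}(H_0)$ by Weyl's theorem, so $E$ remains interior to the essential spectrum; once we produce a solution that is $L^2$ near $+\infty$ (hence, being a solution, $L^2$ on all of $(0,\infty)$) we get an embedded eigenvalue of $H$ at $E$ for the boundary condition $\theta$ read off from $(u(0),u'(0))$ — modifying the envelopes near $x=0$, or translating, if a particular $\theta$ is wanted.

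Base case $l=1$: here $\pm2k_E\equiv\phi\pmod{2\pi}$ for some $\phi\in A$. Take $V(x)=2\,\mathrm{Re}\bigl(c\,e^{-i\phi x}\bigr)\gamma(x)$ with $\gamma(x)=(1+x)^{-1}$ (monotone, hence of bounded variation, and in $L^p$ for every $p\ge 2$). Expanding $p^2$ in its Fourier series and separating the non-oscillatory part, the system becomes
\begin{equation*}
\begin{pmatrix}\alpha\\\beta\end{pmatrix}'=\gamma(x)\,M\begin{pmatrix}\alpha\\\beta\end{pmatrix}+(\text{oscillatory}+\text{integrable}),\qquad M=\frac{1}{W}\begin{pmatrix}0 & -\bar c\,\overline{(p^2)_m}\\ c\,(p^2)_m & 0\end{pmatrix},
\end{equation*}
with $m=(\phi\mp 2k_E)/(2\pi)\in\Z$ and $(p^2)_m$ the corresponding Fourier coefficient. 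Since $W$ is purely imaginary, $-\det M=|c|^2|(p^2)_m|^2/|W|^2>0$ whenever $(p^2)_m\ne 0$, so $M$ has the \emph{real} eigenvalues $\pm|c|\,|(p^2)_m|/|W|$. Feeding this into the asymptotic-integration / Pr\"ufer machinery already available for (\ref{perturbation}) (cf.\ \cite{Behncke}, \cite{Stolz} and Theorem~1.1 of \cite{Lukic-infinite}) produces a solution with $(\alpha,\beta)=O\bigl(\exp(-\tfrac{|c|\,|(p^2)_m|}{|W|}\int_0^x\gamma)\bigr)=O\bigl(x^{-|c|\,|(p^2)_m|/|W|}\bigr)$; as $\varphi_E$ is bounded, taking $|c|>|W|/(2|(p^2)_m|)$ makes $u=\alpha\varphi_E+\beta\overline{\varphi_E}\in L^2$.

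General case $2\le l\le p-1$: now $\pm2k_E\equiv\phi_{i_1}+\dots+\phi_{i_l}\pmod{2\pi}$. Take $V(x)=\sum_{j=1}^l 2\,\mathrm{Re}\bigl(c_{i_j}e^{-i\phi_{i_j}x}\bigr)\gamma_{i_j}(x)$ with $\gamma_{i_j}(x)=(1+x)^{-s}$ for a single exponent $s\in(1/p,\,1/l]$, a nonempty interval since $l\le p-1<p$. Then each $\gamma_{i_j}$ is of bounded variation and in $L^p$, while $\prod_j\gamma_{i_j}\asymp x^{-ls}$ with $ls\le 1$, so $\int^x\prod_j\gamma_{i_j}\to\infty$; all of (i)--(iii) hold trivially for this finite sum. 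Now run the iterated normal-form (averaging) reduction of \cite{Lukic-infinite}: $l$ successive near-identity transformations strip off the non-resonant terms order by order, and after the $l$-th step the relation $\phi_{i_1}+\dots+\phi_{i_l}\equiv\pm2k_E$ surfaces as a genuine secular $2\times2$ block $\bigl(\prod_j\gamma_{i_j}\bigr)\widetilde M$, whose off-diagonal entries are $\prod_j c_{i_j}$ (respectively its conjugate) times a rational function $r(\phi_{i_1},\dots,\phi_{i_l})$ (the accumulated denominators) times a product of Fourier coefficients of powers of $p$, and whose diagonal entries are real. Trace-freeness survives each transformation, and — just as in the base case — the product of the two off-diagonal entries is nonnegative, so $\widetilde M$ has real eigenvalues $\pm\lambda$; hence there is a solution with $(\alpha,\beta)=O\bigl(\exp(-\lambda\int^x\prod_j\gamma_{i_j})\bigr)$, which is $o(1)$ and, when $ls=1$, decays like $x^{-\lambda}$. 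Provided the off-diagonal resonant coefficient is nonzero, scaling the $c_{i_j}$ makes $\lambda>1/2$, so $u\in L^2$.

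It remains to address genericity. The construction works exactly when the pertinent coefficient is nonzero: $(p^2)_m\ne 0$ in the base case, and the off-diagonal entry of $\widetilde M$ in general, which factors as $\bigl(\prod_j c_{i_j}\bigr)\,r(\phi_{i_1},\dots,\phi_{i_l})\,\bigl(\text{product of Fourier coefficients of powers of }p\bigr)$. For fixed $E,k_E$ the $p$-dependent factor is a nonzero real-analytic functional of $V_0$ on the analytic subvariety $\{V_0\in L^1_{loc}((0,1)):\Delta_{V_0}(E)=2\cos k_E\}$ (here $\Delta_{V_0}$ is the discriminant) over which the theorem quantifies — a direct computation, e.g.\ taking $V_0$ with a large Fourier mode at the relevant frequency, shows it is not identically zero — so its zero set is closed with empty interior, giving the claimed open dense set of admissible $V_0$. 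The $A$-dependent factor $r$, together with the requirements that the partial sums $\phi_{i_1}+\dots+\phi_{i_j}$ ($j<l$) avoid $0$ and $\pm2k_E\pmod{2\pi}$ and that $\phi_{i_1}+\dots+\phi_{i_l}$ have a unique such representation within $A$, cuts out (after clearing denominators) the zero set of a single nonzero polynomial in the entries of $A$, i.e.\ an algebraic hypersurface, which is the exceptional set excluded in the statement. I expect the real work to be in the general case: verifying that the iterated near-identity transformations stay inside the perturbation class of \cite{Lukic-infinite}, that the error left after $l$ steps is genuinely integrable so the asymptotic-integration theorem applies, and — the genuinely delicate point — that the secular coefficient does not cancel, so that it really is a nonzero polynomial/rational expression in the data and can be made arbitrarily large by scaling the $c_{i_j}$.
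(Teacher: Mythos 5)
Your overall strategy (reduce to a first-order system in Floquet coordinates, iterate averaging until the $l$-fold resonance $\pm 2k_E\equiv\phi_{i_1}+\dots+\phi_{i_l}$ surfaces as a secular block, then argue that block produces a decaying solution, with genericity coming from nonvanishing of the secular coefficient) is the same in spirit as the paper's. But your construction omits the two ingredients that make the analytic step work, and one of your key claims is false as stated. Your potential has no corrector term and no slowly varying phases, so nothing controls the zero-frequency drift: for $l\ge 2$ the reduction unavoidably produces non-oscillatory ($K=0$) terms, e.g.\ from the combinations $\phi_{i_j}-\phi_{i_j}\equiv 0\pmod{2\pi}$ forced by the reality of $V$, of size $|c|^2x^{-2s}$, which is non-integrable and, for $l>2$, strictly dominates the resonant term $x^{-ls}$; scaling the $c_{i_j}$ cannot fix this since the drift is quadratic in the couplings and decays more slowly. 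Moreover these drift terms are a real drift in the Pr\"ufer phase (the $-1$ in (\ref{eta})), i.e.\ they enter the $(\alpha,\beta)$ system as \emph{purely imaginary} diagonal entries, not real ones as you assert; your secular block therefore has eigenvalues of the form $\pm\sqrt{|z|^2-\Omega^2}$ rather than $\pm\sqrt{d^2+|z|^2}$, and when the drift dominates the block is elliptic, the relative phase rotates, the resonant term averages out, and no decaying solution is produced. This is exactly why the paper's potential (\ref{examplepotential}) carries the extra term $\beta_0(x)$, chosen via Lemma 6.2 of \cite{Lukic-continuous} together with Lemma \ref{periodicization} to make the integrated drift finite so that $\eta$ converges, and the slowly varying phases $\xi_l(x)$, chosen in Lemma \ref{pickpsi} so that the limiting phase makes $\mathrm{Im}(\Lambda(x)e^{it})$ have positive mean; if the drift were cancelled, your Levinson-type dichotomy could indeed replace the phase tuning, but the cancellation cannot be skipped.

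The second gap is that the genericity statement rests entirely on the nonvanishing of the resonant coefficient --- in the paper's notation, the mean of $f_{l,1}(x;\phi_{i_1},\dots,\phi_{i_l})e^{2i\varpi(x)}$ --- as a function of the phases and of the Floquet data, and you leave precisely this unproven (you flag it yourself as ``the genuinely delicate point''). The paper proves it in three steps: Proposition \ref{codimension1example} computes the constant-Floquet case and shows $(-1)^{l+1}f_{j,l}e^{2il\varpi}$ is a nontrivial rational function of $k$ and the phases (positive for large $k$), which also cuts out the codimension-one exceptional set of $A$; Lemma \ref{Fourierlemma} shows that for $\psi=\varphi e^{-ikx}$ a finite trigonometric polynomial the mean is a polynomial in the Fourier coefficients, not identically zero; and the final proposition obtains an open dense set of $V_0$ by density of trigonometric polynomials in $W^{2,1}$, openness of the nonvanishing-mean condition, and the transfer from $\psi$ to $V_0$ through $V_0-E=\psi''/\psi+2ik\,\psi'/\psi-k^2$ with $k$ depending continuously on $V_0$. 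Your appeal to ``a nonzero real-analytic functional on $\{\Delta_{V_0}(E)=2\cos k_E\}$ has nowhere dense zero set'' both presupposes the nontriviality you have not established and replaces the finite-dimensional approximation argument that actually delivers openness and density in the $L^1(0,1)$ topology.
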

Our theorem for the infinite frequency case is as follows.
\begin{thm}\label{infinitefreqthm}
Let the potential $V$ be given by (\ref{Vdef}).
Assume that for some $\mathfrak a\in (0,1/(p-1))$, $\sum_{k=1}^\infty \vert c_k\vert^{\mathfrak a}<\infty$. Then the Hausdorff dimension of $S$ is at most $(p-1)\mathfrak a$. 
Moreover, on $\mathrm{Int}(\sigma_{ess}(H)) \setminus S$, the spectral
measure $\mu$ is mutually absolutely continuous with Lebesgue measure,
so the absolutely continuous spectrum of $H$ is equal to
$\sigma_{ess}(H)$.
\end{thm}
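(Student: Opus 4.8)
\noindent\emph{Proof plan for Theorem \ref{infinitefreqthm}.}
The second assertion requires no new work: since $\mathfrak a<1/(p-1)$, the bound $\dim_H S\le(p-1)\mathfrak a<1$ forces $S$ to have Lebesgue measure zero, so $\mathrm{Int}(\sigma_{ess}(H))\setminus S$ has full Lebesgue measure in $\sigma_{ess}(H)=\sigma_{ess}(H_0)$, a union of nondegenerate bands; the mutual absolute continuity of $\mu$ with Lebesgue measure on $\mathrm{Int}(\sigma_{ess}(H))\setminus S$ is the already-quoted consequence of \cite{Gilbert-Pearson,Behncke,Stolz}, and $\sigma_{ac}(H)=\sigma_{ess}(H)$ follows. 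So the entire task is to prove $\dim_H S\le(p-1)\mathfrak a$.

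The plan is to fix $E$ in the interior of a band and pass to Pr\"ufer-type variables adapted to the periodic background. Writing the Floquet solution of (\ref{unperturbed}) as $\varphi=p(x)e^{ik_Ex}$ and using the real and imaginary parts of $\varphi$ (equivalently, $p$ and $\varpi=\mathrm{Arg}\,p$) as a moving frame, the equation (\ref{perturbation}) transforms into a first-order system for a complex amplitude $z(x)$ whose modulus measures the size of solutions, with $\tfrac{d}{dx}\log|z|$ equal to the real part of a driving term of the shape $\sum_{l}c_l\,\gamma_l(x)\,e^{i(\pm2k_E-\phi_l)x}\,h_l(x)+(\text{lower-frequency and cross terms})$, where the $h_l$ are built from $p$ and $\varpi$ and hence are $1$-periodic, bounded, and of locally bounded variation. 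I would then run the iterative scheme of \cite{Lukic-infinite}: expand $\log|z(x)|-\log|z(0)|$ as a finite sum of multilinear expressions of orders $1,\dots,p-1$ in the quantities $c_l\gamma_l e^{-i\phi_l x}$, plus a remainder. Using (\ref{boundedvar}), (\ref{Lp}) and H\"older's inequality---a product of $p$ of the $\gamma_l$ lies in $L^1$ uniformly in the indices---the remainder has bounded variation; for each multilinear term of order $n$, the contribution of the non-resonant tuples $(l_1,\dots,l_n)$, i.e.\ those with $\pm2k_E-\sum_j\epsilon_j\phi_{l_j}\not\equiv0\pmod{2\pi}$, is tamed by integration by parts. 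Making the resulting multilinear series converge under the mere hypothesis $\sum_l|c_l|^{\mathfrak a}<\infty$, rather than absolute summability, is the delicate point, and here I would import the estimates of \cite{Lukic-infinite}; the new ingredient is to carry them through in the presence of the periodic-background oscillatory factors $h_l$ and with the resonance condition read modulo $2\pi$, since the quasimomentum lives on a circle. The conclusion of this step is that, for $E$ interior to a band, every solution of (\ref{perturbation}) is bounded unless $E$ lies in one of the finitely many sets $S_1,\dots,S_{p-1}$, where $S_n$ collects the energies at which the order-$n$ resonant contribution is unbounded.

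It then remains to show $\dim_H S_n\le n\mathfrak a$, whence $\dim_H S\le\dim_H\bigcup_{n=1}^{p-1}S_n\le(p-1)\mathfrak a$. The set $S_n$ is contained in $\{E:\pm2k_E\in\Sigma_n\ (\mathrm{mod}\ 2\pi)\}$, where $\Sigma_n$ is the $n$-fold sumset $\big\{\sum_{j=1}^n\epsilon_j\phi_{l_j}\big\}$ thickened, around the point contributed by $(l_1,\dots,l_n)$, by an interval of length comparable to $|c_{l_1}\cdots c_{l_n}|^{1/n}$. Covering $\Sigma_n$ by these intervals and using $\sum_l|c_l|^{\mathfrak a}<\infty$---which gives $\sum_{l_1,\dots,l_n}\big(|c_{l_1}\cdots c_{l_n}|^{1/n}\big)^{s}=\big(\sum_l|c_l|^{s/n}\big)^{n}<\infty$ as soon as $s\ge n\mathfrak a$---yields $\mathcal H^{s}(S_n)=0$ for every $s>n\mathfrak a$. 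Finally, the finitely many band edges form a zero-dimensional set, and away from them $E\mapsto k_E$ is real-analytic and locally bi-Lipschitz, so the estimate passes from the quasimomentum variable back to the energy variable. The hard part, as indicated, is the resonant analysis: propagating the convergence of the multilinear expansion of \cite{Lukic-infinite} through the periodic-background setting and isolating from it the sets $S_n$ with the small-divisor exponent that produces precisely the $n\mathfrak a$ bound.
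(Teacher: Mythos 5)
Your overall architecture matches the paper's: pass to modified Pr\"ufer variables adapted to the Floquet solution, expand $\log R$ via an iterated ``integration by parts'' scheme into multilinear terms of orders $1,\ldots,p-1$ plus a remainder controlled by (\ref{boundedvar})--(\ref{alphacondition}), isolate the resonant (small-divisor) energies as the only possible unbounded set, and transfer a Hausdorff-dimension bound from the quasimomentum variable back to energy using the analyticity and monotonicity of $k_E$ on band interiors. That is exactly how the paper reduces Theorem~\ref{infinitefreqthm} to Lemma~\ref{mainlemma} (boundedness when (\ref{smalldivisor}) holds) and Lemma~\ref{Hausdorfflemma} (dimension of the exceptional set), and your observation that the mutual absolute continuity on $\mathrm{Int}(\sigma_{ess}(H))\setminus S$ is already a consequence of the quoted subordinacy theory is correct.

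However, your covering argument for $\dim_H S_n\le n\mathfrak a$ has a genuine gap, and the scaling is wrong. You thicken each resonant point $\sum_j\phi_{l_j}$ by an interval of length $|c_{l_1}\cdots c_{l_n}|^{1/n}$, but being outside all of these intervals does \emph{not} make the small-divisor sum converge. Already for $n=1$: if $2k$ is outside every interval of radius $|c_l|$ around $\phi_l$ one gets only $|c_l|/d(2k,\phi_l)<1$, and $\sum_l 1=\infty$. Shrinking the intervals to radius $|c_l|^{1-\mathfrak a}$ does make the sum converge outside, but then $\sum_l |c_l|^{(1-\mathfrak a)s}$ converges only for $s\ge\mathfrak a/(1-\mathfrak a)$, not $s\ge\mathfrak a$, and the corresponding bound $\dim_H S\le (p-1)\mathfrak a/(1-(p-1)\mathfrak a)$ is strictly weaker than what the theorem asserts. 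No choice of thickening radius makes a naive $s$-dimensional covering give the sharp $n\mathfrak a$: there is a tension between making the sum converge off the intervals and making the covering sum converge. The argument that actually delivers the stated bound, and the one the paper invokes (via Lemma~4.1/4.2 of \cite{Lukic-infinite}, adapted so that the resonances are at $2k-\sum\phi\equiv 0\ (\mathrm{mod}\,2\pi)$ rather than at a single point), is measure-theoretic: for any $\beta$-H\"older measure $\nu$ with $\beta>n\mathfrak a$ one truncates the singular factors at height $1$, integrates against $\nu$ using the H\"older bound on $\nu(B(\cdot,r))$, and then removes the truncation on a set of full $\nu$-measure; Frostman's lemma then yields the Hausdorff bound. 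Replacing your covering sketch by this argument is not optional; it is where the exponent $n\mathfrak a$ (as opposed to $n\mathfrak a/(1-n\mathfrak a)$) comes from.

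Two smaller points. First, the central technical work that the paper carries out and you largely gesture at is making the iteration compatible with the periodic background: the integration by parts cannot use a global antiderivative of $e^{i\alpha x}$ times a periodic coefficient, and the paper builds the operator $\lambda_{\alpha,K}$ of Proposition~\ref{lambda} precisely to produce a $1$-periodic antiderivative, then establishes the recursion for $f_{J,K}, g_{J,K}$ and the bound $\|\lambda_{\alpha,K}\|\le2$. Your proposal says ``carry them through in the presence of periodic factors,'' but that carrying-through is the bulk of Sections~2--3. Second, when you pass from quasimomentum to energy you should note that it suffices to work band by band: on each band interior $k_E$ is real-analytic with nonvanishing derivative, hence locally bi-Lipschitz, and band edges form a finite (zero-dimensional) set; you say this, and it is what the paper uses.
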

 These results are an extension of \cite{Lukic-continuous} and \cite{Lukic-infinite}, which study the case when our $V_0$ is identically zero. We may also see these results as extensions of \cite{Kurasov-Naboko}, which addresses the problem in $L^2$ for a less general $V$, using different methods.
 
The next three sections will explain the proofs of Theorems \ref{finitefreqthm} and \ref{infinitefreqthm}. In the last section we will discuss a converse problem, that is, the existence of embedded eigenvalues in the case where $(\ref{Vdef})$ is a finite sum. This section will culminate in a proof of Theorem \ref{embeddedthm}.
\end{section}
\begin{section}{Preliminary Lemmas}
We wish to control solutions of the perturbed Schr\"odinger operator
(\ref{perturbation}) by comparing them to solutions of the unperturbed operator (\ref{unperturbed}), so
we will use modified Pr\"ufer variables which were defined in \cite{KRS}
for this purpose.

We first fix $E$ and fix a $\varphi$. We will need to choose $\varphi$ so that it is linearly independent of $\overline\varphi$: this is possible since we restrict our attention to $E$ in the interior of $\sigma_{ess}$. We define $\rho(x)\in \C$ as in (21) of \cite{KRS}:

\[
\left(
\begin{array}{c}
u'(x)\\
u(x)
\end{array}
\right)=
\mathrm{Im} \left[
\rho(x) 
\left(
\begin{array}{c}
\varphi'(x)\\
\varphi(x)
\end{array}
\right)
\right].
\]
 We also define

\begin{align}
R(x)=&\vert\rho(x)\vert\\
\eta(x)=&\mathrm{Arg}(\rho(x))\\
\theta(x)=&kx+\varpi(x)+\eta(x).
\end{align}

We choose $\eta$ so that $\eta(0)\in (-\pi,\pi]$ and $\eta$ continuous. Our Pr\"ufer variables are going to be $R$ and $\eta$.

Let $\omega$ be the Wronskian of $\varphi, \overline{\varphi}$. Note that we can assume the Wronskian is positive and independent of $x$. By Theorem 2.3 of \cite{KRS} we have that
\[
[\ln R(x)]'+i\eta'(x)=\frac{\rho'(x)}{\rho(x)}=\frac{2\vert \varphi(x)\vert^2}{\omega} V(x) \sin (kx+\varpi(x)+\eta(x))e^{-ikx-i\varpi(x)-i\eta(x)},
\]
which we then rewrite as
\[-\eta'(x)+i[\ln(R(x)]'=\frac{\vert\varphi(x)\vert^2}{\omega}V(x)(1-e^{-2i(kx+\varpi(x)+\eta(x))}).
\]
In particular, we can write
\begin{align}
\label{R}[\ln R(x)]'=&\mathrm{Im}\left(\frac{\vert \varphi(x)\vert^2}{\omega}V(x)e^{2ikx+2i\varpi(x)+2i\eta(x)}\right),\\
\label{eta}\eta'(x)=&\frac{\vert \varphi(x)\vert^2}{\omega}V(x)\left( -1+\frac{1}{2}\left(e^{2ikx+2i\varpi(x)+2i\eta(x)}+e^{-2ikx-2i\varpi(x)-2i\eta(x)}\right)\right).
\end{align}

Note that our choice of $V(x)$ given in (\ref{Vdef}) means that $[\ln(R(x)]'$ can be written as a sum of terms which are products of periodic and decaying factors. Our immediate goal is to perform a sequence of manipulations that look like integration by parts so as to integrate the periodic factors of $[\ln(R(x)]'$ and differentiate the decaying factors, an approach in the spirit of \cite{Lukic-continuous} and \cite{Lukic-infinite}. First, we need a technical lemma to choose the appropriate antiderivative for the periodic factors.

\begin{prop}\label{lambda}
Let $\Phi(\alpha;x)$ be continuous and $1$-periodic in $x$. Then there exists a continuous $1$- periodic function $\tilde\Phi_\alpha(x)$ such that \[(i\tilde\Phi_\alpha(x) e^{i\alpha x})'=(1-e^{i\alpha})\Phi(\alpha;x) e^{i\alpha x}.\]
Furthermore, if $\alpha\not\equiv 0\mod 2\pi$, this function is unique.
\end{prop}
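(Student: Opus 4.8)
\emph{Proof strategy.} The plan is to convert the requested identity into an explicit antiderivative problem and then fix the one free constant using periodicity. Since the right-hand side $(1-e^{i\alpha})\Phi(\alpha;x)e^{i\alpha x}$ is continuous, the identity $(i\tilde\Phi_\alpha(x)e^{i\alpha x})'=(1-e^{i\alpha})\Phi(\alpha;x)e^{i\alpha x}$ holds precisely when $i\tilde\Phi_\alpha(x)e^{i\alpha x}$ is a $C^1$ antiderivative of it. Writing $G(x)=\int_0^x e^{i\alpha t}\Phi(\alpha;t)\,dt$, this means $i\tilde\Phi_\alpha(x)e^{i\alpha x}=(1-e^{i\alpha})G(x)+C$ for some $C\in\C$, so the only possible solutions are
\[ \tilde\Phi_\alpha(x)=-ie^{-i\alpha x}\left[(1-e^{i\alpha})G(x)+C\right], \]
each of which is automatically continuous because $G$ is $C^1$. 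It then remains only to choose $C$ so that this function is $1$-periodic.

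First I would record how $G$ behaves under a unit shift: using $\Phi(\alpha;t+1)=\Phi(\alpha;t)$ together with the substitution $t\mapsto t+1$ gives $G(x+1)=G(1)+e^{i\alpha}G(x)$. Plugging this into the formula above and demanding $\tilde\Phi_\alpha(x+1)=\tilde\Phi_\alpha(x)$, the terms involving $G(x)$ cancel and one is left with the single scalar equation $C(1-e^{-i\alpha})=e^{-i\alpha}(1-e^{i\alpha})G(1)$. If $\alpha\not\equiv 0\mod 2\pi$ then $1-e^{-i\alpha}=-e^{-i\alpha}(1-e^{i\alpha})\neq 0$, which forces $C=-G(1)=-\int_0^1 e^{i\alpha t}\Phi(\alpha;t)\,dt$; with that choice, reversing the computation shows $\tilde\Phi_\alpha$ is indeed $1$-periodic, which proves existence. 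If $\alpha\equiv 0\mod 2\pi$ the coefficient $1-e^{i\alpha}$ vanishes, the identity reduces to $(i\tilde\Phi_\alpha e^{i\alpha x})'=0$, and $\tilde\Phi_\alpha\equiv 0$ works (as does any constant multiple of $e^{-i\alpha x}$), so existence holds here as well.

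For uniqueness, I would subtract two continuous $1$-periodic solutions: their difference $\psi$ satisfies $(i\psi(x)e^{i\alpha x})'=0$, so $\psi(x)=ce^{-i\alpha x}$ for some constant $c$, and periodicity of $\psi$ then forces $c(1-e^{-i\alpha})=0$. Hence $c=0$ whenever $\alpha\not\equiv 0\mod 2\pi$, giving uniqueness in that regime.

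I do not anticipate a substantive obstacle: this is a short resonance computation, and the only matters needing care are keeping the algebraic relation between $1-e^{i\alpha}$ and $1-e^{-i\alpha}$ straight and separating out the degenerate case $\alpha\equiv 0\mod 2\pi$, where the right-hand side of the defining relation vanishes identically. A Fourier-series approach is also available---matching coefficients forces the $n$-th Fourier coefficient of $\tilde\Phi_\alpha$ to be $(e^{i\alpha}-1)/(2\pi n+\alpha)$ times that of $\Phi(\alpha;\cdot)$---but one would then have to check separately that the resulting series defines a continuous function, whereas the antiderivative route makes continuity automatic.
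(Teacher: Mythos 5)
Your proof is correct and follows essentially the same route as the paper: your $G$ is the paper's $Q_\alpha$, your choice $C=-G(1)$ reproduces the paper's explicit formula (\ref{tildePhi}), periodicity is checked via the same unit-shift identity for the integral, and uniqueness is argued identically from the non-periodicity of $e^{-i\alpha x}$ when $\alpha\not\equiv 0\bmod 2\pi$. The only cosmetic difference is that you parametrize all antiderivatives first and solve for the constant, rather than writing the formula down and verifying it.
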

\begin{proof}
Let $Q_\alpha(x)$ be the antiderivative of $\Phi(\alpha;x) e^{i\alpha x}$ such that $Q_\alpha(0)=0$. We then define

\begin{equation}\label{tildePhi}
\tilde\Phi_\alpha(x)= -i Q_\alpha(x)e^{-i\alpha x}(1-e^{i\alpha})+i Q_\alpha(1)e^{-i\alpha x}.
\end{equation}
We first note that $(i\tilde\Phi_\alpha(x) e^{i\alpha x})'=(1-e^{i\alpha})\Phi(\alpha;x) e^{i\alpha x}$. It remains to show that $\tilde\Phi_\alpha(x)$ is $1$-periodic. 

We observe that since $i\tilde\Phi_\alpha(x) e^{i\alpha x}$ is an antiderivative of $(1-e^{i\alpha})\Phi(\alpha;x) e^{i\alpha x}$, and since 

\[
\tilde\Phi_\alpha(1)=-i Q_\alpha(1)e^{-i\alpha}(1-e^{i\alpha}) +i Q_\alpha(1) e^{-i\alpha}\\
= i Q_\alpha(1)\\
= \tilde \Phi_\alpha (0),
\]
we must then have
\begin{align*}
i\tilde\Phi_\alpha(x)e^{i\alpha x}-i\tilde\Phi_\alpha(0)=&(1-e^{i\alpha})\int_0^x \Phi(\alpha;t) e^{i\alpha t} dt\\
=& (e^{-i\alpha}-1)\int_0^x \Phi(\alpha;t+1) e^{i\alpha (t+1)} dt \text{  ($1$-periodicity of $\Phi$)}\\
=& (e^{-i\alpha}-1)\int_1^{x+1} \Phi(\alpha;t) e^{i\alpha t} dt\\
=& e^{-i\alpha}\left(i\tilde\Phi_\alpha(x+1)e^{i\alpha(x+1)} - i\tilde\Phi_\alpha(1)e^{i\alpha} \right)\\
i\tilde\Phi_\alpha(x)e^{i\alpha x}=& i\tilde\Phi_\alpha(x+1)e^{i\alpha x} - i\tilde\Phi_\alpha(1) +i\tilde\Phi_\alpha(0)\\
\tilde\Phi_\alpha(x)=& \tilde\Phi_\alpha(x+1). 
\end{align*}
We now demonstrate uniqueness when $\alpha\not\equiv 0\mod 2\pi$. Consider a $1$-periodic continuous function $\Psi(x)$ such that $(\Psi(x) e^{i\alpha x})'=(1-e^{i\alpha})\Phi(\alpha;x) e^{i\alpha x}$. We then know that for some constant $C$,
\[ \Psi(x) e^{i\alpha x}=\tilde\Phi_\alpha (x) e^{i\alpha x}+C\]
we can then write
\[\Psi(x)=\tilde\Phi_\alpha (x) +Ce^{-i\alpha x}.\]

Observe that $e^{-i\alpha x}$ is not $1$-periodic since $\alpha$ cannot be a multiple of $2\pi$, by hypothesis. Thus it must be true that $C=0$, and so $\Psi(x)=\tilde \Phi_\alpha(x)$.
\end{proof}

\begin{paragraph}{\textbf{Remark}}
It is easy to see that if $\alpha\equiv 0\mod 2\pi$, we may define $\tilde\Phi_\alpha (x)$ to be an arbitrary constant. The following lemma demonstrates that there is a natural choice for this arbitrary constant.
\end{paragraph}
\begin{lemma}\label{continuityat0}
Let $n$ be an integer. Assume that a function $\Phi(\alpha;x)$ continuous and $1$-periodic in $x$, and converges uniformly as $\alpha\to 2\pi n$ for $x$ on $\R$ . Then $Q_{2\pi n}(1)=\lim_{\alpha\to 2\pi n} Q_\alpha(1)$ exists, and $\tilde\Phi_\alpha(x)$ converges to $iQ_{2\pi n}(1)$ as $\alpha\to 2\pi n$ uniformly for $x$ on $\R$
\end{lemma}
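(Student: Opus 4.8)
The plan is to turn a statement about uniform behaviour over the unbounded line into elementary estimates over the fundamental domain $[0,1)$, using the $1$-periodicity of $\Phi(\alpha;\cdot)$ to fold $\R$ down to $[0,1)$.

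First I would establish that the limit $Q_{2\pi n}(1)$ exists. By hypothesis $\Phi(\alpha;\cdot)\to\Phi(2\pi n;\cdot)$ uniformly on $\R$, so on the compact interval $[0,1]$ the integrand $\Phi(\alpha;t)e^{i\alpha t}$ converges uniformly: estimate $|\Phi(\alpha;t)-\Phi(2\pi n;t)|$ by the uniform norm and $|\Phi(2\pi n;t)|\,|e^{i\alpha t}-e^{2\pi int}|$ by $\|\Phi(2\pi n;\cdot)\|_\infty\,|\alpha-2\pi n|$. Integrating over $[0,1]$ gives $Q_\alpha(1)\to Q_{2\pi n}(1)$. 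The same estimate shows that $\int_0^1|\Phi(\alpha;t)|\,dt$ stays bounded as $\alpha\to 2\pi n$, a fact I will need for the uniformity.

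The heart of the argument is a periodicity identity for $\tilde\Phi_\alpha$. Writing $x=m+r$ with $m=\lfloor x\rfloor$ and $r\in[0,1)$ and using $1$-periodicity of $\Phi(\alpha;\cdot)$, one has $\int_j^{j+1}\Phi(\alpha;t)e^{i\alpha t}\,dt=e^{i\alpha j}Q_\alpha(1)$ for each integer $j$ and $\int_m^{m+r}\Phi(\alpha;t)e^{i\alpha t}\,dt=e^{i\alpha m}Q_\alpha(r)$; summing the geometric series (valid for $\alpha\notin 2\pi\Z$) gives $Q_\alpha(x)=Q_\alpha(1)\frac{e^{i\alpha m}-1}{e^{i\alpha}-1}+e^{i\alpha m}Q_\alpha(r)$. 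Substituting this into the definition~(\ref{tildePhi}) of $\tilde\Phi_\alpha$, the factors $e^{i\alpha m}$ and $e^{-i\alpha x}$ collapse to $e^{-i\alpha r}$, the two $Q_\alpha(1)$ contributions partially cancel, and one is left with
\[
\tilde\Phi_\alpha(x)=ie^{-i\alpha r}\bigl(Q_\alpha(1)-(1-e^{i\alpha})Q_\alpha(r)\bigr),\qquad r=x-\lfloor x\rfloor .
\]
This can be checked directly at $x\in[0,1)$ (where $r=x$), and since both sides are $1$-periodic in $x$ — the left side by Proposition~\ref{lambda} — it holds for all $x$. The payoff is that the right-hand side depends on $x$ only through the bounded variable $r\in[0,1)$.

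Finishing is then routine: $|Q_\alpha(r)|\le\int_0^1|\Phi(\alpha;t)|\,dt$ is bounded uniformly in $r$ for $\alpha$ near $2\pi n$, so $(1-e^{i\alpha})Q_\alpha(r)\to 0$ uniformly in $r\in[0,1)$; combining this with $Q_\alpha(1)\to Q_{2\pi n}(1)$ and $e^{-i\alpha r}\to 1$ uniformly on $[0,1)$ yields $\tilde\Phi_\alpha(x)\to iQ_{2\pi n}(1)$ uniformly for $x\in\R$. I expect the main obstacle to be precisely the uniformity over the unbounded set $\R$: in the raw formula~(\ref{tildePhi}) the factor $e^{-i\alpha x}$ oscillates and $Q_\alpha(x)$ grows without bound, so the two terms separately do not converge uniformly — it is only the cancellation exhibited by the periodicity identity above that rescues the uniform limit, so deriving that identity and keeping its sign and exponent bookkeeping straight is where the real work lies.
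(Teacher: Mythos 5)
Your reduction to the fundamental domain is correct as far as the identity goes: the geometric-series computation giving $Q_\alpha(x)=Q_\alpha(1)\frac{e^{i\alpha m}-1}{e^{i\alpha}-1}+e^{i\alpha m}Q_\alpha(r)$ and the resulting formula $\tilde\Phi_\alpha(x)=ie^{-i\alpha r}\bigl(Q_\alpha(1)-(1-e^{i\alpha})Q_\alpha(r)\bigr)$ with $r=x-\lfloor x\rfloor$ both check out, and this is a genuinely different (and more explicit) route than the paper's, which shows $Q_\alpha\to Q_{2\pi n}$ uniformly on compact sets, deduces uniform convergence of $\tilde\Phi_\alpha$ on compacts from (\ref{tildePhi}), and upgrades to $\R$ via $1$-periodicity. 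The gap is in your very last step: as $\alpha\to 2\pi n$ one has $e^{-i\alpha r}\to e^{-2\pi i n r}$, which is \emph{not} $1$ for $r\in(0,1)$ unless $n=0$. Feeding the correct limit into your identity, what you have actually proved is that $\tilde\Phi_\alpha(x)\to iQ_{2\pi n}(1)e^{-2\pi i n x}$ uniformly on $\R$ (note $e^{-2\pi i n r}=e^{-2\pi i n x}$), a $1$-periodic but nonconstant limit whenever $n\neq 0$ and $Q_{2\pi n}(1)\neq 0$. Concretely, take $n=1$ and $\Phi(\alpha;x)=e^{-2\pi i x}$: then $Q_\alpha(1)\to\int_0^1 e^{-2\pi i t}e^{2\pi i t}\,dt=1$, while directly from (\ref{tildePhi}) one gets $\tilde\Phi_\alpha(1/2)\to i e^{-i\pi}=-i$, not $iQ_{2\pi}(1)=i$. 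So your argument establishes the stated constant limit only for $n=0$ or $Q_{2\pi n}(1)=0$; the assertion ``$e^{-i\alpha r}\to 1$ uniformly on $[0,1)$'' cannot be repaired for general $n$.

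It is only fair to add that this is precisely the point where the paper's own proof is loose as well: it computes $\lim_{\alpha\to 2\pi n}Q_\alpha(x)$ as $\int_0^x\Phi(2\pi n;t)\,dt$, silently dropping the factor $e^{2\pi i n t}$, and never tracks the factor $e^{-i\alpha x}$ in (\ref{tildePhi}); carried out carefully, its compactness-plus-periodicity argument also yields the limit $iQ_{2\pi n}(1)e^{-2\pi i n x}$ rather than the constant $iQ_{2\pi n}(1)$. Your explicit identity is in fact the cleaner way to see what the true limiting object is. To prove the lemma as stated you must either restrict to $n=0$, or reformulate the conclusion (and the definition of $\tilde\Phi_{2\pi n}$ used afterwards) with the oscillating factor, i.e.\ $\tilde\Phi_{2\pi n}(x)=iQ_{2\pi n}(1)e^{-2\pi i n x}$; everything in your write-up before the final limit step (existence of $\lim_{\alpha\to2\pi n}Q_\alpha(1)$, the folding identity, and the uniform bounds on $Q_\alpha(r)$ and on $(1-e^{i\alpha})Q_\alpha(r)$) is sound and would support that corrected statement.
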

\begin{proof}
Let us define, as in Proposition \ref{lambda}, $Q_\alpha(x)$ as the antiderivative of $\Phi(\alpha;x)e^{i\alpha x}dx$ with $Q_\alpha(0)=0$. Then, due to the fact that $\Phi(\alpha;x), e^{i\alpha x}$ uniformly converge as $\alpha\to 2\pi n$ for $x$ in compact subsets of $\R$,

\begin{align*}
Q_\alpha (x)=&\int_0^x\Phi(\alpha;t)e^{i\alpha t} dt,\\
\lim_{\alpha\to 2\pi n }\int_0^x\Phi(\alpha;t)e^{i\alpha t} dt=&\int_0^x\lim_{\alpha\to {2\pi n}}\Phi(\alpha;t)e^{i\alpha t}dt\\
=&\int_0^x \Phi(2\pi n;t)dt\\
=& Q_{2\pi n}(x).
\end{align*}
Thus indeed $\lim_{\alpha\to 2\pi n}Q_\alpha(x)$ converges to $Q_{2\pi n}(x)$. Furthermore, since \[\int_{x_0}^{x_1}\Phi(\alpha;t)e^{i\alpha t} dt,\] is bounded for $x_0<x_1$ and $x_1$ bounded, the convergence is uniform on compact subsets of $x$. From (\ref{tildePhi}), we then find that $\tilde \Phi_\alpha(x)$ converges uniformly on compact subsets of $\R$. Since $\tilde\Phi_\alpha(x)$ is $1$-periodic, it follows that it in fact converges uniformly on $\R$.

\end{proof}

As a consequence, defining $\tilde \Phi_{2\pi n}(x)$ as $iQ_{2\pi n}(1)$ ensures that $\tilde\Phi_\alpha(x)$ is uniformly continuous at $\alpha\equiv 0\mod 2\pi$.

Let us define $C_{per}(0,\infty)$ as the space of continuous $1$-periodic functions on $(0,\infty)$ with the uniform norm. Then the preceding proposition allows us to make the following definition:
\begin{defn}\label{lambdadefn}  For $K\in \Z, \alpha\in \R$,  $\lambda_{\alpha,K}$ is a linear operator from $C_{per}(0,\infty)$ to itself so that $\lambda_{\alpha,K}$ takes each $\Phi(\alpha;x)\in C_{per}(0,\infty)$ to the corresponding \[\tilde \Phi_\alpha(x) e^{-2Ki\varpi(x)},\] where $\tilde \Phi_\alpha$ is as defined in (\ref{tildePhi}). 
\end{defn}
\begin{lemma}\label{lambdanorm} With $\norm \ldots\norm$ denoting the operator norm, $\norm \lambda_{\alpha,K}\norm \leq 2.$
\end{lemma}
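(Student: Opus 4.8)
The plan is to reduce the operator-norm estimate to a pointwise bound on $\tilde\Phi_\alpha$ and then to a single period. First, since $\varpi$ is real-valued, the factor $e^{-2Ki\varpi(x)}$ has modulus $1$ at every $x$, so multiplication by it is an isometry of $C_{per}(0,\infty)$; hence $\|\lambda_{\alpha,K}\Phi\|_\infty=\|\tilde\Phi_\alpha\|_\infty$ for every $\Phi\in C_{per}(0,\infty)$, the integer $K$ playing no role, and it suffices to show $\|\tilde\Phi_\alpha\|_\infty\le 2\|\Phi(\alpha;\cdot)\|_\infty$.

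Next I would invoke the $1$-periodicity of $\tilde\Phi_\alpha$ — established in Proposition \ref{lambda} when $\alpha\not\equiv 0\bmod 2\pi$, and furnished by the remark after Lemma \ref{continuityat0} in the exceptional case — to restrict attention to $x\in[0,1]$. For $\alpha\not\equiv 0\bmod 2\pi$ I would start from the explicit formula (\ref{tildePhi}) together with $\tilde\Phi_\alpha(0)=iQ_\alpha(1)$ to write, for $x\in[0,1]$,
\[
\tilde\Phi_\alpha(x)=ie^{-i\alpha x}\bigl(Q_\alpha(1)-(1-e^{i\alpha})Q_\alpha(x)\bigr)=ie^{-i\alpha x}\Bigl(\bigl(Q_\alpha(1)-Q_\alpha(x)\bigr)+e^{i\alpha}Q_\alpha(x)\Bigr).
\]
The point is that $Q_\alpha(1)-Q_\alpha(x)=\int_x^1\Phi(\alpha;t)e^{i\alpha t}\,dt$ and $Q_\alpha(x)=\int_0^x\Phi(\alpha;t)e^{i\alpha t}\,dt$ are each integrals of a function of modulus at most $\|\Phi(\alpha;\cdot)\|_\infty$ over an interval of length at most $1$, hence each has modulus at most $\|\Phi(\alpha;\cdot)\|_\infty$; since $|e^{-i\alpha x}|=|e^{i\alpha}|=1$, the triangle inequality gives $|\tilde\Phi_\alpha(x)|\le 2\|\Phi(\alpha;\cdot)\|_\infty$. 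The case $\alpha\equiv 0\bmod 2\pi$, say $\alpha=2\pi n$, is even easier: there $\tilde\Phi_\alpha\equiv iQ_{2\pi n}(1)=i\int_0^1\Phi(2\pi n;t)\,dt$, of modulus at most $\|\Phi(2\pi n;\cdot)\|_\infty$.

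Taking the supremum over $x$ and then over $\Phi$ with $\|\Phi(\alpha;\cdot)\|_\infty\le 1$ yields $\|\lambda_{\alpha,K}\|\le 2$. There is essentially no obstacle here; the one thing to be careful about is not to try to bound $|Q_\alpha(x)|$ directly for large $x$ — it can grow when $\alpha$ is small — which is exactly why one should reduce to a single period via the already-established periodicity of $\tilde\Phi_\alpha$ rather than via the raw formula (\ref{tildePhi}). (The same grouping in fact gives the sharper bound $|\tilde\Phi_\alpha(x)|\le\|\Phi(\alpha;\cdot)\|_\infty$, since the two interval lengths sum to $1$, but the constant $2$ is all that is needed in the sequel.)
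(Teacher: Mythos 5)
Your proof is correct and follows essentially the same route as the paper: the paper likewise reduces to $x\in[0,1)$ by periodicity and rewrites (\ref{tildePhi}) as $\tilde\Phi_\alpha(x)=ie^{i\alpha}Q_\alpha(x)e^{-i\alpha x}+i\bigl(Q_\alpha(1)-Q_\alpha(x)\bigr)e^{-i\alpha x}$, bounding each of $|Q_\alpha(x)|$ and $|Q_\alpha(1)-Q_\alpha(x)|$ by $\norm\Phi\norm$. Your additional remarks (the unimodular factor $e^{-2Ki\varpi(x)}$, the $\alpha\equiv 0\bmod 2\pi$ case, and the sharper constant $1$) are fine but not needed beyond what the paper does.
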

\begin{proof}
Let $\Phi(\alpha;x)\in C_{per}(0,\infty)$.
Note that since $\vert(\lambda_{\alpha,K} \Phi)(x)\vert$ is $1$-periodic and continuous, it must have a maximum in $[0,1)$. Let $Q_\alpha(x)$ once again be the antiderivative of $\Phi(\alpha;x)e^{i\alpha(x)}$ such that $Q_\alpha(0)=0$. It is clear that $\vert Q_\alpha(x)\vert \leq \norm \Phi\norm$ for any $x\in [0,1)$. In fact, it is also clear that $\vert Q_\alpha(1)-Q_\alpha(x)\vert \leq \norm \Phi\norm$. When we rewrite (\ref{tildePhi}) as
\[\tilde \Phi_\alpha (x) =i e^{i\alpha}Q_\alpha(x)e^{-i\alpha x}+ i (Q_\alpha(1)-Q_\alpha(x))e^{-i\alpha x},
\]
it follows that $\norm\tilde \Phi_\alpha\norm\leq 2$.
\end{proof}

Since the variations of the $\gamma_i$ are uniformly bounded, it is possible to define
\[
\tau=\sup_l \mathrm{Var}(\gamma_l,(0,\infty))<\infty.
\]

\begin{lemma}\label{fubini}
We let $J,K\in \Z$ with $J\geq 1$ and $0\leq K\leq J$, and $0\leq a<b<\infty$. We also define $\Gamma(x)=\gamma_{m_1}(x)\ldots\gamma_{m_J}(x)$, and we let $\phi$ be some phase in $[0,2\pi)$. Then where $\Phi$ is some continuous $1$-periodic function,
\begin{dmath}
2 \norm\lambda_{2Kk-\phi,K} \Phi\norm \tau^J\geq \left \vert \int_a^b\left( (1-e^{i(2Kk-\phi)})e^{2Ki(\eta(x)+kx)}e^{-i\phi x}\Gamma(x)\Phi(x)\\
-2Ke^{2Ki(\eta(x)+kx+\varpi(x))}e^{-i\phi x}\Gamma(x) \dfrac{d\eta(x)}{dx} \lambda_{2Kk-\phi,K}\Phi(x)\right)\right dx\vert.
\end{dmath}
\end{lemma}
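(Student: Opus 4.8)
The plan is to recognize the integrand on the right-hand side as $\Gamma(x)\,h'(x)$ for a suitable bounded, absolutely continuous $h$, integrate by parts against the bounded-variation function $\Gamma$, and then control the boundary and Stieltjes terms that appear by a sharp estimate on the variation of the product $\Gamma=\gamma_{m_1}\cdots\gamma_{m_J}$.

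In detail, set $\alpha=2Kk-\phi$ and let $\tilde\Phi_\alpha$ be the continuous $1$-periodic function attached to $\Phi$ by Proposition~\ref{lambda} (with the convention of Lemma~\ref{continuityat0} if $\alpha\equiv 0\bmod 2\pi$), so that by Definition~\ref{lambdadefn} one has $\lambda_{\alpha,K}\Phi=\tilde\Phi_\alpha e^{-2Ki\varpi}$ and $(i\tilde\Phi_\alpha e^{i\alpha x})'=(1-e^{i\alpha})\Phi e^{i\alpha x}$. I would introduce $h(x)=i\tilde\Phi_\alpha(x)e^{i\alpha x}e^{2Ki\eta(x)}$. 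A one-line differentiation, using the identity for $\tilde\Phi_\alpha$ and then rewriting $e^{i\alpha x}e^{2Ki\eta}=e^{2Ki(\eta+kx)}e^{-i\phi x}$ and $\tilde\Phi_\alpha e^{i\alpha x}e^{2Ki\eta}=\lambda_{\alpha,K}\Phi\,e^{2Ki(\eta+kx+\varpi)}e^{-i\phi x}$, shows that $\Gamma(x)h'(x)$ is precisely the integrand appearing in the lemma. Moreover $h$ is absolutely continuous on $[a,b]$ (since $\tilde\Phi_\alpha\in C^1$ and $\eta$ is absolutely continuous, cf.\ (\ref{eta})), and since $k$, $\varpi$, $\eta$ are real on the interior of a band, $|h(x)|=|\tilde\Phi_\alpha(x)|=|\lambda_{\alpha,K}\Phi(x)|\le\norm\lambda_{\alpha,K}\Phi\norm$ for every $x$.

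Integration by parts for Lebesgue--Stieltjes integrals then gives $\int_a^b\Gamma h'\,dx=\Gamma(b)h(b)-\Gamma(a)h(a)-\int_{(a,b]}h\,d\Gamma$, so that, bounding $|h|$ by $\norm\lambda_{\alpha,K}\Phi\norm$ throughout, the quantity to be estimated is at most $\norm\lambda_{\alpha,K}\Phi\norm\bigl(|\Gamma(a)|+|\Gamma(b)|+\mathrm{Var}(\Gamma,(a,b))\bigr)$. It thus remains to show $|\Gamma(a)|+|\Gamma(b)|+\mathrm{Var}(\Gamma,(a,b))\le 2\tau^J$. Since each $\gamma_{m_j}$ is of bounded variation and in $L^p$, it (hence the product $\Gamma$) tends to $0$ at $+\infty$; therefore $|\Gamma(b)|\le\mathrm{Var}(\Gamma,(b,\infty))$ and $|\Gamma(a)|\le|\Gamma(a)-\Gamma(b)|+|\Gamma(b)|\le\mathrm{Var}(\Gamma,(a,b))+\mathrm{Var}(\Gamma,(b,\infty))$, so the left-hand side is $\le 2\,\mathrm{Var}(\Gamma,(a,\infty))\le 2\,\mathrm{Var}(\Gamma,(0,\infty))$, and it suffices to prove $\mathrm{Var}(\Gamma,(0,\infty))\le\tau^J$.

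This last inequality is the main obstacle, and it is where the name of the lemma comes from: the crude product rule $\mathrm{Var}(fg)\le\norm f\norm_\infty\mathrm{Var}(g)+\norm g\norm_\infty\mathrm{Var}(f)$ only gives $\mathrm{Var}(\Gamma,(0,\infty))\le J\tau^J$, which is too weak for the constant $2$. Instead I would prove, by induction on the number of factors, the sharp bound $\mathrm{Var}(fg,(0,\infty))\le\mathrm{Var}(f,(0,\infty))\,\mathrm{Var}(g,(0,\infty))$ valid whenever $f,g$ are of bounded variation with $f(\infty)=g(\infty)=0$. For this, note that $|f(x)|\le\mathrm{Var}(f,(x,\infty))=\int\mathbf{1}\{y>x\}\,|df|(y)$ and likewise for $g$; inserting these into the Leibniz-rule bound $\mathrm{Var}(fg,(0,\infty))\le\int|f|\,|dg|+\int|g|\,|df|$ and applying Tonelli's theorem to interchange the order of integration converts the right-hand side into $\iint_{y>x}|df|(y)\,|dg|(x)+\iint_{y<x}|df|(y)\,|dg|(x)\le\iint|df|(y)\,|dg|(x)=\mathrm{Var}(f,(0,\infty))\,\mathrm{Var}(g,(0,\infty))$. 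Iterating over the $J$ factors (each partial product again vanishes at $+\infty$) yields $\mathrm{Var}(\Gamma,(0,\infty))\le\prod_{j=1}^J\mathrm{Var}(\gamma_{m_j},(0,\infty))\le\tau^J$, which closes the estimate.
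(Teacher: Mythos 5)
Your strategy is sound and genuinely different from the paper's. The paper proves this lemma by invoking Lemma 2.1 of \cite{Lukic-infinite} with exactly the auxiliary function you call $h$ (there $\psi$): each factor is written as $\gamma_{m_j}(x)=-\int_{(x,\infty)}d\gamma_{m_j}(t_j)$, Fubini is applied in the variables $(x,t_1,\ldots,t_J)$, the inner $x$-integral becomes $\psi(\min(t_1,\ldots,t_J,b))-\psi(a)$, bounded by $2\norm\psi\norm$, and the outer integrals give $\prod_j\mathrm{Var}(\gamma_{m_j},(0,\infty))\le\tau^J$ directly, so no bound on $\mathrm{Var}(\Gamma)$ itself is ever needed. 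Your route --- integrate by parts against $d\Gamma$ and then prove the sharp product inequality $\mathrm{Var}(\Gamma,(0,\infty))\le\prod_j\mathrm{Var}(\gamma_{m_j},(0,\infty))$ --- reaches the same constant and has the merit of being self-contained, but it concentrates the whole difficulty in that product inequality. Your preliminary steps (the computation $h'=$ integrand, $|h|\le\norm\lambda_{2Kk-\phi,K}\Phi\norm$ since $k,\varpi,\eta$ are real, $\gamma_l\to0$ at infinity from bounded variation plus $L^p$, and $|\Gamma(a)|+|\Gamma(b)|+\mathrm{Var}(\Gamma,(a,b))\le2\,\mathrm{Var}(\Gamma,(0,\infty))$) are all correct.

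There is, however, one genuine flaw in the last stage: the Leibniz bound $\mathrm{Var}(fg)\le\int|f|\,|dg|+\int|g|\,|df|$, with the pointwise values of $f$ and $g$, is false when $f$ and $g$ jump at a common point. Take $f=g=\mathbf{1}_{[0,c)}$, which is of bounded variation and vanishes at infinity: then $\mathrm{Var}(fg)=1$ while both integrals vanish, since $f(c)=g(c)=0$. This is not an ignorable edge case here, because the hypotheses give only bounded variation (no continuity) and the indices $m_1,\ldots,m_J$ may repeat, so the factors of $\Gamma$ can certainly share discontinuities. The inequality you actually need, $\mathrm{Var}(fg,(0,\infty))\le\mathrm{Var}(f,(0,\infty))\,\mathrm{Var}(g,(0,\infty))$ for BV functions vanishing at $+\infty$, is nevertheless true; to repair your argument, use the one-sided product rule $d(fg)=f(x^-)\,dg+g(x^+)\,df$ and the bounds $|f(x^-)|\le|df|([x,\infty))$, $|g(x^+)|\le|dg|((x,\infty))$, so that Tonelli produces the two regions $\{s\ge t\}$ and $\{s<t\}$ (with $s$ the $|df|$-variable and $t$ the $|dg|$-variable), which are disjoint with union the whole product; the diagonal --- exactly the common-jump contribution your version drops --- is then counted once, and the product bound follows. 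Alternatively, a discrete telescoping argument over partitions, bounding $|f(x_i)|\le\mathrm{Var}(f,[x_i,\infty))$ and $|g(x_{i-1})|\le\mathrm{Var}(g,[x_{i-1},\infty))$ and performing the same disjoint-regions bookkeeping on the indices, avoids Stieltjes product rules altogether. With that repair (and the usual endpoint conventions in the integration-by-parts formula for the absolutely continuous $h$ against $d\Gamma$), your proof is complete.
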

\begin{proof}
The proof is identical to the proof of Lemma 2.1 of \cite{Lukic-infinite}, except that we redefine $\psi(x)$ in that proof to be equal to \[e^{2Ki\eta(x)}\cdot ie^{i(2Kkx+2K\varpi(x)-\phi x)}\lambda_{2Kk-\phi,K} \Phi(x).\]
 Keep in mind that the derivative of $ie^{i(2Kkx+2K\varpi(x)-\phi x)} \lambda_{2Kk-\phi,K} (\Phi(x))$ is \newline
 $(1-e^{i(2Kk-\phi)}) e^{(2Kk-\phi)ix}\Phi(x)$, by the definition of  $\lambda_{2Kk-\phi,K}$.
\end{proof}

\end{section}
\begin{section}{A recursion relation}
 For integers $J,K$ with $J\geq 1 $ and $0\leq K\leq J$, let us now define functions $f_{J,K}, g_{J,K}$ as follows. They are functions of $1+J$ variables, $x ,\phi_1,\ldots, \phi_J$, and they also depend implicitly on $E$. We will use them in (\ref{S}), (\ref{S11}) to estimate the growth of $R(x)$.
 
For convenience, we define $\Phi_0(x)=\vert \varphi(x)\vert^2/\omega$.
 We first set
\begin{equation} \label{fginitial}
 f_{1,0}(x;\phi_1)=0, f_{1,1}(x;\phi_1)=\Phi_0(x).
  \end{equation}
  We then define 
  \begin{equation}\label{g}
  g_{J,K}(x;\{\phi_j\}_{j=1}^J)=\frac{2K}{1-e^{i(2Kk-\sum_{j=1}^J \phi_j)}}\lambda_{2Kk-\sum_{j=1}^J \phi_j,K}[e^{2iK\varpi(x)}f_{J,K}(x;\{\phi_j\}_{j=1}^J)],
  \end{equation}
  and for $J\geq 2$,
  \begin{equation}
  f_{J,K}(x;\{\phi_j\}_{j=1}^J)=\sum_{l=K-1}^{K+1}\sum_{\sigma\in S_J}\frac{ \Phi_0(x)}{J!} w_{K-l}g_{J-1,l}(x;\{\phi_{\sigma (j)}\}_{j=1}^{J-1}).\label{f}
    \end{equation}
    Here $S_J$ denotes the symmetric group in $J$ elements and, motivated by (\ref{eta}), we define the constant function
    \begin{equation}
    w_a(x;\phi)=\begin{cases}
-1& a=0,\\
\frac{1}{2}& a=\pm 1,\\
0 &\vert a\vert\geq 2.
\end{cases}
\end{equation}
We use the symmetric product defined as Definition 2.1 of \cite{Lukic-infinite}, with some slight modifications:

\begin{defn}
For a function $p_I$ of $I+1$ variables and a function $q_J$ of $J+1$ variables, their symmetric product is a function $p_I \odot q_J$ of $1+I+J$ variables defined by
 
 \[ (p_I \odot q_J)(x;\{ \phi_i\}_{i=1}^{I+J} )=\frac{1}{(I+J)!} \sum_{\sigma\in S_{I+J}}p_I(x;\{ \phi_{\sigma(i)}\}_{i=1}^I)q_J(x;\{ \phi_{\sigma(i)}\}_{i=I+1}^{I+J}).\]
\end{defn}

Where $\delta$ refers to the Kronecker delta, we can express $f_{J,K}$ as 

\begin{equation}\label{fodot}
f_{J,K}=\delta_{J-1} \Phi_0(x)+\sum_{a=-1}^1 \Phi_0(x)w_a\odot g_{J-1,K+a}.
\end{equation}

\begin{lemma}
For $0\leq K\leq I$ and $0<l<I$,
\begin{align}\label{frecurse}
f_{I,K}=\frac{1}{2}\sum_{j=0}^I f_{j,l} \odot g_{I-j,K-l}.\\
\label{grecurse}g_{I,K}=\frac{1}{2}\sum_{j=0}^I g_{j,l} \odot g_{I-j,K-l}.
\end{align}
\end{lemma}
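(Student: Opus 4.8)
The plan is to prove the two identities \eqref{frecurse} and \eqref{grecurse} simultaneously by induction on $I$, exploiting the fact that $g_{I,K}$ is obtained from $f_{I,K}$ by a single application of a $\lambda$-operator (up to the scalar factor $\tfrac{2K}{1-e^{i(2Kk-\sum\phi_j)}}$ and the twist $e^{2iK\varpi}$), so that once \eqref{frecurse} is established, \eqref{grecurse} should follow by applying that same operator to both sides and recognizing the right-hand side as $\tfrac12\sum_j g_{j,l}\odot g_{I-j,K-l}$. The base cases $I=1$ are vacuous or immediate from \eqref{fginitial}. For the inductive step I would start from the definition \eqref{fodot}, namely $f_{I,K}=\delta_{I-1}\Phi_0+\sum_{a=-1}^1\Phi_0 w_a\odot g_{I-1,K+a}$, and expand each $g_{I-1,K+a}$ using the inductive hypothesis \eqref{grecurse} with a suitable intermediate index; the combinatorics of the symmetric product — associativity of $\odot$ and the way the factor $\tfrac1{(I+J)!}\sum_{\sigma}$ redistributes when one nests products — is what makes the telescoping work.

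The key algebraic facts I would isolate first, as a preliminary sub-lemma if needed, are: (1) $\odot$ is commutative and associative in the graded sense, $(p_I\odot q_J)\odot r_L = p_I\odot(q_J\odot r_L)$ with the correct multinomial normalization; (2) $\Phi_0(x)\,(\,\cdot\,)$ commutes past $\odot$ in the appropriate slot, since $\Phi_0$ depends only on $x$; and (3) the single-step relation $f_{J,K}=\Phi_0\delta_{J-1}+\sum_a \Phi_0 w_a\odot g_{J-1,K+a}$ together with $g_{J,K}=(\text{scalar})\cdot\lambda_{\dots}[e^{2iK\varpi}f_{J,K}]$. With these in hand, the right-hand side of \eqref{frecurse}, $\tfrac12\sum_{j=0}^I f_{j,l}\odot g_{I-j,K-l}$, can be split off its $j=0$ and $j=I$ end terms and the middle terms re-expanded; after substituting \eqref{grecurse} for the $g$'s and \eqref{fodot} for the $f$'s, everything should collapse to $\delta_{I-1}\Phi_0+\sum_a\Phi_0 w_a\odot g_{I-1,K+a}=f_{I,K}$. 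I would want to double-check that the identity is genuinely independent of the choice of the intermediate index $l$ (the statement requires $0<l<I$), which is itself a consistency constraint that the induction must respect.

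The main obstacle I anticipate is purely bookkeeping: making the symmetric-product combinatorics line up, in particular tracking the factorial normalizations through nested $\odot$'s and through the sums over $S_J$ that appear in \eqref{f}, and confirming that the scalar prefactors $\tfrac{2K}{1-e^{i(2Kk-\sum\phi_j)}}$ in the definition of $g$ interact correctly — note that $\sum_{j=1}^{I}\phi_j = \sum_{j=1}^{j}\phi_{\sigma(j)} + \sum_{j=j+1}^{I}\phi_{\sigma(j)}$ so the exponents in the denominators add up across a product, which is exactly what is needed for the $\lambda$-operator composition to behave multiplicatively. A secondary subtlety is the role of $e^{2iK\varpi(x)}$: since $\lambda_{\alpha,K}$ already incorporates a factor $e^{-2Ki\varpi}$ by Definition \ref{lambdadefn}, one must verify that the twists telescope when $g$'s are composed, i.e. that $e^{2i(K-l)\varpi}$ and $e^{2il\varpi}$ recombine to $e^{2iK\varpi}$ at each stage. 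Once the normalization conventions are pinned down this should be a routine, if lengthy, verification, and I would present it by first proving \eqref{frecurse} in full and then deducing \eqref{grecurse} by applying the defining operator of $g$ to both sides.
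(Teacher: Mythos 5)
Your plan for \eqref{frecurse} is essentially the paper's: expand via \eqref{fodot}, apply the inductive hypothesis to the $g\odot g$ terms, and collapse back to $f_{I,K}$; that half is fine. The genuine gap is in your route to \eqref{grecurse}. You propose to deduce it by ``applying the defining operator of $g$ to both sides'' of \eqref{frecurse} and ``recognizing'' the result as $\tfrac12\sum_j g_{j,l}\odot g_{I-j,K-l}$, on the grounds that phases add across products so the $\lambda$-composition ``behaves multiplicatively.'' But $\lambda_{\alpha,K}$ is an antiderivative-type operator (Proposition \ref{lambda}), not a multiplication operator, and it does not distribute over products: applying $\frac{2K}{1-e^{i(2Kk-\sum\phi)}}\lambda_{2Kk-\sum\phi,K}\bigl[e^{2iK\varpi}\,\cdot\,\bigr]$ to a single term $f_{j,l}\odot g_{I-j,K-l}$ does not yield $g_{j,l}\odot g_{I-j,K-l}$. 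The mechanism that actually works, and that the paper uses, is a Leibniz-rule computation combined with uniqueness: each $g_{t,l}$ is characterized by the fact that $i e^{i(2l(kx+\varpi(x))-\sum\phi x)}g_{t,l}$ has derivative $2l\,e^{i(2l(kx+\varpi(x))-\sum\phi x)}f_{t,l}$, so differentiating $\tfrac{i}{2}e^{i(2K(kx+\varpi(x))-\sum_j\phi_j x)}\sum_t g_{t,l}\,g_{I-t,K-l}$ produces cross terms weighted unequally, $2l\,f_{t,l}\,g_{I-t,K-l}+2(K-l)\,g_{t,l}\,f_{I-t,K-l}$; only after averaging over permutations and invoking the just-established \eqref{frecurse} (for both splitting indices $l$ and $K-l$) do these resum to $2K\,f_{I,K}$, and then the uniqueness clause of Proposition \ref{lambda} (available because $2Kk-\sum\phi\not\equiv 0\bmod 2\pi$) identifies the periodic function $\tfrac12\sum_t g_{t,l}\odot g_{I-t,K-l}$ with $g_{I,K}$. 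This weighted cross-term structure and the appeal to uniqueness of the periodic representative are the heart of the proof of \eqref{grecurse}; your sketch treats them as routine bookkeeping about prefactors and telescoping twists, and as literally stated the term-by-term ``recognition'' is false.

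A second omission: you nowhere handle the resonant case $2Kk-\sum\phi\equiv 0\bmod 2\pi$, where uniqueness in Proposition \ref{lambda} fails and $\tilde\Phi_\alpha$ is only fixed by the convention following Lemma \ref{continuityat0}. The paper proves the identities off the resonance set and then extends them by the uniform-convergence statement of Lemma \ref{continuityat0}; some such limiting argument is needed in your write-up as well, since the uniqueness step you would ultimately rely on is simply unavailable at resonant phases.
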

\begin{proof}
Let us assume for now that the $2Kk-\sum \phi$ terms are never congruent to $0\mod 2\pi$.
We prove both (\ref{frecurse}) and (\ref{grecurse}) at the same time, inductively. The statement is vacuously true for $I\leq 1$. Let us assume now that it holds for $I-1$. We use (\ref{fodot}) and notice
\[
\sum_{t=0}^I f_{t,l}\odot g_{I-t,K-l}=\sum_{t=0}^I(\delta_{t-1} \Phi_0(x)+\sum_{a=-1}^1 \Phi_0(x)w_a\odot g_{t-1,l+a})\odot g_{I-t,K-l}.
\]

Using the inductive assumption, we may apply (\ref{grecurse}) to the $g\odot g$ terms, unless $l+a\leq 0$. But $l+a\leq 0$ holds only for $l=1, a=-1$, and in this exceptional case $g_{t-1, l+a}=0$.  Thus,

\begin{align*}
\sum_{t=0}^I f_{t,l}\odot g_{I-t,K-l}=&\sum_{t=0}^I \delta_{t-1}\Phi_0(x)\odot g_{I-t,K-l}\\
&+\sum_{a=-1}^1\sum_{t=0}^I \Phi_0(x)w_a\odot g_{t-1,l+a}\odot g_{I-t,K-l}\\
=&\delta_{l-1}\Phi_0(x)\odot g_{I-1,K-1}\\
&+\sum_{a=-1}^1\sum_{t=0}^I \Phi_0(x)w_a\odot 2
 (g_{I-1,K+a}-\delta_{a+1}\delta_{l-1}  g_{I-1, K-1})\\
 &\text{(because $t=1$, $l>1$ implies $f_{t,l}=0$)}\\
 =&\delta_{l-1}\Phi_0(x)\odot g_{I-1,K-1}\\
&+2f_{I,K}-\delta_{I-1}\Phi_0(x)-2w_{-1}\Phi_0(x)\odot\delta_{l-1}  g_{I-1, K-1}\\
=&2 f_{I,K}.
\end{align*}
The last equality is due to the fact that we are assuming $I>1$.
It remains to prove (\ref{grecurse}).

We can calculate that for any $\sigma\in S_I$, using the product rule and (\ref{g})
\begin{dmath*}
\dfrac{d}{dx}\left(\frac{ie^{i(2K(kx+\varpi(x))-\sum_{j=1}^I \phi_jx)}}{2}\sum_{t=0}^I g_{t,l}(\{x; \phi_{\sigma(j)}  \}_{j=1}^t)  g_{I-t,K-l}(\{x; \phi_{\sigma(j)}  \}_{j=t+1}^I)\right)\\
=\dfrac{d}{dx}\left(\frac{i}{2}\sum_{t=0}^I \left[ e^{i(2l(kx+\varpi(x))-\sum_{j=1}^t \phi_{\sigma(j)}x)} g_{t,l}(\{ x;\phi_{\sigma(j)}  \}_{j=1}^t)\right]\\
\times\left[ e^{i(2(K-l)(kx+\varpi(x))-\sum_{j=t+1}^I \phi_{\sigma(j)}x)} g_{I-t,K-l}(x;\{ \phi_{\sigma(j)}  \}_{j=t+1}^I)\right]\right).\\
\end{dmath*}
Recalling (\ref{g}), Proposition \ref{lambda} and Definition \ref{lambdadefn} and applying the product rule, we find that this expression is equal to
\begin{dmath*}
\left(\frac{1}{2}\sum_{t=0}^I \left[ 2l e^{i(2l(kx+\varpi(x))-\sum_{j=1}^t \phi_{\sigma(j)}x)} f_{t,l}(\{x; \phi_{\sigma(j)}  \}_{j=1}^t)\right]\\
\times\left[ e^{i(2(K-l)(kx+\varpi(x))-\sum_{j=t+1}^I \phi_{\sigma(j)}x)} g_{I-t,K-l}(x;\{ \phi_{\sigma(j)}  \}_{j=t+1}^I)\right]\right)\\
+\left(\frac{1}{2}\sum_{t=0}^I  \left[ e^{i(2l(kx+\varpi(x))-\sum_{j=1}^t \phi_{\sigma(j)}x)} g_{t,l}(x;\{ \phi_{\sigma(j)}  \}_{j=1}^t)\right]\\
\times\left[ 2(K-l) e^{i(2(K-l)(kx+\varpi(x))-\sum_{j=t+1}^I \phi_{\sigma(j)}x)} f_{I-t,K-l}(x;\{ \phi_{\sigma(j)}  \}_{j=t+1}^I) \right]\right).
\end{dmath*}
Collecting terms, we find that this in turn can be written as
\begin{dmath*}
\frac{e^{i(2K(kx+\varpi(x))-\sum_{j=1}^I \phi_jx)}}{2}\left(\sum_{t=0}^I  2l f_{t,l}(x;\{ \phi_{\sigma(j)}  \}_{j=1}^t) g_{I-t,K-l}(x;\{ \phi_{\sigma(j)}  \}_{j=t+1}^I)+2(K-l) g_{t,l}(x;\{ \phi_{\sigma(j)}  \}_{j=1}^t)   f_{I-t,K-l}(x;\{ \phi_{\sigma(j)}  \}_{j=t+1}^I) \right).\\
\end{dmath*}
Once we average across all permutations $\sigma\in S_I$, we arrive at

\begin{align*}
&\left(\sum_{t=1}^I\frac{ie^{i(2K(kx+\varpi(x))-\sum_{t=1}^I \phi_jx)}}{2} g_{t,l}\odot g_{I-t,K-l}\right)'\\
=&2l\frac{e^{i(2K(kx+\varpi(x))-\sum_{j=1}^I \phi_jx)}}{2}\sum_{t=1}^I f_{t,l}\odot g_{I-t,K-l}\\
&+2(K-l)\frac{e^{i(2K(kx+\varpi(x))-\sum_{j=1}^I \phi_jx)}}{2}\sum_{t=1}^I f_{t,K-l}\odot g_{I-t,l}\\
=&2Ke^{i(2K(kx+\varpi(x))-\sum_{j=1}^I \phi_jx)} f_{I,K}.
\end{align*}
Since $\frac{1}{2}g_{t,l}\odot g_{I-t,K-l}$ is $1$-periodic in $x$, we conclude by uniqueness in Proposition \ref{lambda} that it is equal to $g_{I,K}$.

Now in the case where some $2Kk-\sum\phi\equiv0\mod 2\pi$, we may apply Lemma \ref{continuityat0} to assert that $f,g$ are continuous at $\alpha\equiv 0\mod 2\pi $ and thus since the equalities (\ref{frecurse}),(\ref{grecurse}) hold for every $2Kk-\sum\phi$ in a neighborhood of $0\mod 2\pi$, they must be true for $2Kk-\sum\phi\equiv 0\mod 2\pi$ as well.
\end{proof}

Let us define functions $h_j$ of $1+j$ variables recursively by $h_0(x)=1$ and 

\begin{align*}
&h_J(x;\phi_1,\ldots \phi_J)\\
=&\frac{1}{\vert 1-e^{i(2k-\phi_1-\ldots-\phi_J)}\vert}\sum_{j=0}^{J-1} h_j (x;\phi_1,\ldots, \phi_j)h_{J-j-1}(x; \phi_{j+1},\ldots, \phi_{J-1}).
\end{align*}

Next, we recall that by Lemma \ref{lambdanorm}, $\norm \lambda_\alpha\norm \leq 2$. 

\begin{lemma}\label{h-glemma}
Where $\norm \Phi_0\norm$ refers to the maximum of the periodic continuous function $\vert \Phi_0(x)\vert $, the function $g_{J,1}$ can be bounded in terms of $h_J$ in the following manner:
\[\vert g_{J,1}(x;\{\phi_j\}_{j=1}^J)\vert\leq \frac{2(2 \norm \Phi_0\norm)^J}{J!}\sum_{\sigma\in S_J} h_J(x;\{\phi_{\sigma(j)}\}_{j=1}^J ).\]
\end{lemma}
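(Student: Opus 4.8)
The plan is to prove Lemma \ref{h-glemma} by strong induction on $J$, mirroring the recursive definitions of $g_{J,K}$ in \eqref{g}--\eqref{f} and of $h_J$. For the base case $J=1$ we have $g_{1,1}(x;\phi_1)=\frac{2}{1-e^{i(2k-\phi_1)}}\lambda_{2k-\phi_1,1}[e^{2i\varpi(x)}\Phi_0(x)]$, so by Lemma \ref{lambdanorm} its modulus is bounded by $\frac{2}{|1-e^{i(2k-\phi_1)}|}\cdot 2\norm\Phi_0\norm = 2(2\norm\Phi_0\norm)h_1(x;\phi_1)$, since $h_1(x;\phi_1)=\frac{1}{|1-e^{i(2k-\phi_1)}|}$ and the symmetric sum over $S_1$ is trivial; this matches the claimed bound.

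For the inductive step, I would start from the unrolled expression for $g_{J,1}$. Combining \eqref{g} with $K=1$ and \eqref{fodot}, one has
\[
g_{J,1}=\frac{2}{1-e^{i(2k-\sum\phi_j)}}\lambda_{2k-\sum\phi_j,1}\Bigl[e^{2i\varpi(x)}\bigl(\delta_{J-1}\Phi_0(x)+\sum_{a=-1}^{1}\Phi_0(x)\,w_a\odot g_{J-1,1+a}\bigr)\Bigr].
\]
For $J\geq 2$ the $\delta_{J-1}$ term vanishes, and the $a=-1$ term has second index $0$, so $g_{J-1,0}=0$ (there is no $f_{J-1,0}$ contributing). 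Thus only $a=0$ and $a=1$ survive, i.e. $g_{J-1,1}$ and $g_{J-1,2}$ appear. This forces me to also carry along a bound for $g_{J,2}$ — more generally I expect the clean statement to be an auxiliary claim bounding $g_{J,K}$ for all $1\leq K\leq J$ by $\frac{2^{?}(2\norm\Phi_0\norm)^J}{J!}\sum_{\sigma}h_J$ with a $K$-dependent constant, proved simultaneously by induction; the factor $2K/|1-e^{i(2Kk-\sum\phi)}|$ and the nested $\lambda$ (norm $\leq 2$) and the $\odot$ with $\Phi_0 w_a$ (which contributes $\norm\Phi_0\norm$ and $|w_a|\leq 1$) all feed the powers of $2$ and of $\norm\Phi_0\norm$. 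The symmetrized product $w_a\odot g_{J-1,1+a}$ averaged over $S_J$, together with the recursive definition of $h_J$ as $\frac{1}{|1-e^{i(2k-\sum\phi)}|}\sum_{j}h_j\odot h_{J-j-1}$ (here effectively $j=0$, $h_0=1$), is exactly what produces $h_{J-1}\leadsto h_J$: the denominator $|1-e^{i(2k-\sum_{j=1}^J\phi_j)}|$ out front matches, and the inner $h_{J-1}(x;\phi_2,\dots,\phi_J)$-type factor is supplied inductively. Carefully matching the symmetric-group averaging in $\odot$ against the sum over $\sigma\in S_J$ and the $\frac{1}{J!}$ normalization, using that $|\lambda_{\cdot,\cdot}(\cdot)|\leq 2\sup|\cdot|$ pointwise (since $\lambda$ produces a $1$-periodic function whose sup is controlled, and we may take the sup), will give the recursion $\text{bound}(J)\leq \frac{2\cdot 2\cdot 2\norm\Phi_0\norm}{J}\cdot\frac{1}{|1-e^{i(2k-\sum\phi)}|}\cdot(\text{symmetrized bound}(J-1))$, which telescopes to the claimed closed form.

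The main obstacle I anticipate is bookkeeping the constants and, especially, handling the second index $K=2$ (and in the simultaneous induction, general $K$): the factor $2K$ in \eqref{g} grows, so one needs either to absorb it into the constant (noting $K\leq J$, giving an extra $J!$-type factor that must be checked not to break the stated bound) or to observe that for the specific target $g_{J,1}$ only a bounded range of intermediate indices is ever reached and track them explicitly. A secondary technical point is the degenerate case $2Kk-\sum\phi_j\equiv 0\bmod 2\pi$, where the prefactor $\frac{2K}{1-e^{i(\cdots)}}$ and the denominator in $h_J$ both blow up; but then the claimed inequality holds trivially (right side $=+\infty$), or alternatively one invokes Lemma \ref{continuityat0} and passes to the limit exactly as in the proof of \eqref{frecurse}--\eqref{grecurse}. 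I would also need to make sure the sup over $x$ in applying $\norm\lambda_{\alpha,K}\norm\leq 2$ is compatible with keeping the $x$-dependence in $h_J$ on the right-hand side; since $h_J$ only depends on $x$ through nothing at all (its recursion has no genuine $x$-dependence once $h_0=1$), in fact $h_J$ is constant in $x$, so the pointwise-in-$x$ bound is unproblematic and the uniform norm of $\lambda$ suffices.
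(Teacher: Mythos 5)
Your base case is right (it matches the paper's bound $\vert g_{1,1}\vert\le 4\norm\Phi_0\norm h_1$), and you correctly identify the structural difficulty in the inductive step: after expanding $f_{J,1}$ via (\ref{fodot}), the term $\Phi_0 w_1\odot g_{J-1,2}$ appears, so second index $1$ alone does not close the induction. But your proposed fix --- a simultaneous induction bounding $g_{J,K}$ for all $1\le K\le J$ by $h_J$ with a $K$-dependent constant --- cannot work, and the obstruction is not the growing factor $2K$ that you flag. By (\ref{g}), $g_{J,K}$ carries the small divisor $1-e^{i(2Kk-\sum_j\phi_j)}$, whereas $h_J$ is built exclusively from divisors of the form $\vert 1-e^{i(2k-\,\cdot\,)}\vert$. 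Near a resonance $2Kk\equiv\sum\phi\pmod{2\pi}$ with $K\ge 2$, all the $K=1$ divisors can stay bounded away from zero, so the right-hand side of your auxiliary claim remains finite while the left-hand side blows up; your ``right side $=+\infty$'' escape hatch only covers $K=1$ resonances. Your fallback --- that only a bounded range of intermediate indices is reached --- is also false: unrolling $g_{J,1}\to f_{J,1}\to g_{J-1,2}\to f_{J-1,2}\to g_{J-2,3}\to\cdots$ pushes the second index up to order $J$, each step introducing a new divisor $1-e^{i(2Kk-\sum\phi)}$ that $h$ does not see.

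The missing idea, which is how the paper closes the induction, is to invoke the recursion relation (\ref{grecurse}) with $l=1$ to rewrite
\[
g_{J-1,2}=\tfrac12\sum_{j=1}^{J-2} g_{j,1}\odot g_{J-1-j,1},
\]
so that only quantities with second index $1$ (covered by the inductive hypothesis) ever enter; the higher-$K$ small divisors cancel, and this cancellation is precisely the content of the recursion lemma. With this substitution, $f_{J,1}$ becomes $\Phi_0\bigl[w_0\odot g_{J-1,1}+\tfrac12 w_1\odot\sum_{j=1}^{J-2} g_{j,1}\odot g_{J-1-j,1}\bigr]$, and after applying $\norm\lambda_{2k-\sum\phi,1}\norm\le 2$ and the prefactor $\frac{2}{\vert 1-e^{i(2k-\sum\phi_j)}\vert}$, the resulting quadratic convolution (together with $h_0=1$ absorbing the $w_0$ term) is exactly the recursion defining $h_J$, which yields the stated constant $\frac{2(2\norm\Phi_0\norm)^J}{J!}\sum_{\sigma\in S_J}$. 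Your observation that $h_J$ is constant in $x$ is correct and harmless, but without the recursion step your induction does not go through.
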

\begin{proof}

We prove this by induction. First we note that 
\[ \vert g_{1,1}(x;\phi_1)\vert =\left\vert 2\frac{\lambda_{2k-\phi_1}e^{2i\varpi(x)}f_{1,1}(x;\phi_1)}{1-e^{i(2k-\phi_1)}}\right\vert\leq 4\norm \Phi_0\norm h_1(x;\phi_1).\]

For $J\geq 2$, we deduce, using (\ref{f}), (\ref{fodot}) and the inductive hypothesis,

\begin{dmath*}
\vert g_{J,1}(x;\{\phi_j\}_{j=1}^J )\vert =\left\vert 2\frac{\lambda_{2k-\sum_{j=1}^J \phi_j}e^{2i\varpi(x)}f_{J,1}}{1-e^{i(2k-\sum_{j=1}^J \phi_j)}}\right\vert\\
= \left\vert\frac{2}{1-e^{i(2k-\sum_{j=1}^J \phi_j)}} \\
\times\lambda_{2k-\sum_{j=1}^J \phi_j}\left(e^{2i\varpi(x)}\Phi_0(x) \left[ w_0\odot g_{J-1,1}+\frac{1}{2}w_1\odot \sum_{j=1}^{J-2} g_{j,1}\odot g_{J-j-1,1} \right]\right)\right\vert\\
\leq  \frac{4\norm \Phi_0\norm}{\vert 1-e^{i(2k-\sum_{j=1}^J \phi_j)}\vert}\left\vert\left(  w_0\odot g_{J-1,1}+\frac{1}{2}w_1\odot \sum_{j=1}^{J-2} g_{j,1}\odot g_{J-j-1,1} \right)\right\vert\\
\leq  \frac{2(2\norm \Phi_0\norm )^J}{\vert1-e^{i(2k-\sum_{j=1}^J \phi_j)}\vert}\left\vert\left(  2w_0\odot h_{J-1,1}+2w_1\odot \sum_{j=1}^{J-2} h_{j,1}\odot h_{J-j-1,1} \right)\right\vert\\
\leq  \frac{2(2\norm \Phi_0\norm )^J}{\vert 1-e^{i(2k-\sum_{j=1}^J \phi_j)}\vert}\left(  2\vert w_0\vert \odot h_{J-1,1}+2w_1\odot \sum_{j=1}^{J-2} h_{j,1}\odot h_{J-j-1,1} \right)\\
=\frac{2(2 \norm \Phi_0\norm)^J}{J!}\sum_{\sigma\in S_J} h_J(x;\{\phi_{\sigma(j)}\}_{j=1}^J ).
\end{dmath*}
\end{proof}

We are now prepared to prove the following lemma:

\begin{lemma}\label{mainlemma}
 Let $E\in (0,\infty)$ so that $V$ can be given by (\ref{Vdef}), and that the following conditions are satisfied:
 \begin{enumerate}[(i)]
 \item $\sum_{l=1}^\infty \vert c_l\vert <\infty$
 \item for $j=1,\ldots p-1$, and $1\leq K\leq j$,
 \begin{equation}\label{smalldivisor}
 \sum_{l_1,\ldots, l_j=1}^\infty \vert c_{l_1}\ldots c_{l_j} h_j(x;\phi_{l_1},\ldots, \phi_{l_j})\vert<\infty.
 \end{equation}

 \end{enumerate}
 then all the solutions of (\ref{perturbation}) are bounded.
\end{lemma}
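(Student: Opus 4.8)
The plan is to control the Prüfer amplitude $R(x)$ directly via the formula (\ref{R}) for $[\ln R(x)]'$, using the functions $f_{J,K}, g_{J,K}$ to organize a finite sequence of integrations by parts. The starting point is that, by (\ref{R}) and the expansion (\ref{Vdef}) of $V$, the derivative $[\ln R(x)]'$ is a sum over frequencies $l$ of terms of the form $c_l\,\Phi_0(x)\,\gamma_l(x)\,e^{-i\phi_l x}e^{2i(kx+\varpi(x)+\eta(x))}$ (plus the complex conjugate, coming from the imaginary part). Each such term is a product of a decaying slowly-varying factor $c_l\gamma_l(x)$, a periodic factor ($\Phi_0$ and powers of $e^{2i\varpi(x)}$), and an oscillatory factor $e^{i(2k-\phi_l)x}e^{2i\eta(x)}$. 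Using Proposition \ref{lambda} (via the operator $\lambda_{\alpha,K}$) we antidifferentiate the periodic-times-oscillatory part and differentiate the rest; the boundary terms are bounded by Lemma \ref{lambdanorm} and $\sup_l\|\gamma_l\|_{TV}\le\tau$, while the newly produced integral terms fall into two groups: those where $\frac{d}{dx}\eta(x)$ is differentiated (which, via (\ref{eta}), reintroduces another factor of $V$, hence raises $J$ by one) and those where another $\gamma_l$ is differentiated (contributing a finite amount because of bounded variation). This is exactly the bookkeeping that Lemma \ref{fubini} isolates, and the recursion (\ref{frecurse})--(\ref{grecurse}) is what guarantees the scheme closes up.

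The key steps, in order, are: (1) Write $\ln R(b)-\ln R(a)=\int_a^b[\ln R]'$ and substitute the $f_{1,1},g_{1,1}$ data; apply Lemma \ref{fubini} once to replace the oscillatory integral by a boundary term (bounded) plus an integral involving $e^{2i\eta}\frac{d\eta}{dx}g_{1,1}$. (2) Use (\ref{eta}) to rewrite $\frac{d\eta}{dx}$ as $\Phi_0(x)V(x)(-1+\tfrac12(e^{2i(kx+\varpi+\eta)}+\text{c.c.}))$, thereby producing a new sum over a second frequency index, with the role of the periodic factor now played by $\Phi_0 w_a\odot g_{1,l}$; this is precisely $f_{2,K}$ by (\ref{fodot}). (3) Iterate: each application of Lemma \ref{fubini} followed by substitution of (\ref{eta}) passes from level $J$ to level $J+1$, and the recursion lemma shows the functions generated are exactly $f_{J,K}, g_{J,K}$. (4) Stop after $p-1$ steps: at that stage the remaining integral term carries a product $\gamma_{l_1}\cdots\gamma_{l_p}$ of $p$ slowly-varying factors, and by the $L^p$ bound (\ref{Lp}) together with a Hölder/interpolation estimate this last term is absolutely convergent and finite (this is the standard reason the recursion is truncated at $p$, as in \cite{Lukic-infinite}). (5) Collect all boundary terms: each is bounded by a constant times $\sum |c_{l_1}\cdots c_{l_j}|\,|g_{j,K}|$ type sums, which by Lemma \ref{h-glemma} are dominated by $\sum|c_{l_1}\cdots c_{l_j} h_j(x;\phi_{l_1},\dots,\phi_{l_j})|$, finite by hypothesis (ii), uniformly in $x$; hypothesis (i) handles the absolute summability needed to interchange sums and integrals. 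Conclude $\sup_x|\ln R(x)|<\infty$, hence $R$ is bounded above and below, hence every solution $u$ of (\ref{perturbation}) is bounded.

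The main obstacle I expect is step (4)/(5): making the truncation argument clean. One must check carefully that after exactly $p-1$ iterations the ``leftover'' integral term — the one that is not a boundary term and does not get fed back into another integration by parts — involves a genuinely $p$-fold product of the $\gamma_l$'s so that (\ref{Lp}) applies, and that all the intermediate boundary terms as well as this final term are controlled \emph{uniformly in $x$} and uniformly in the truncation endpoints $a,b$, using only Lemma \ref{h-glemma} and hypotheses (i)--(ii). A secondary technical point is the degenerate case $2Kk-\sum\phi_j\equiv 0\bmod 2\pi$, where $\lambda_{\alpha,K}$ must be interpreted via Lemma \ref{continuityat0}; since the $h_j$ are built with the factor $1/|1-e^{i(2k-\sum\phi)}|$, one should note that hypothesis (ii) implicitly excludes those resonant energies (or handle them by the continuity argument already used in the recursion lemma). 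Beyond that, the estimates are routine adaptations of the scheme in \cite{Lukic-continuous, Lukic-infinite}, with $|\varphi(x)|^2/\omega$ and powers of $e^{2i\varpi(x)}$ taking the place of the constant factors that appear in the free case.
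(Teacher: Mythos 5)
Your outline reproduces the paper's iteration scheme faithfully up to the last step, but it has a genuine gap exactly where you claim to conclude $\sup_x\vert\ln R(x)\vert<\infty$. When you expand $d\eta/dx$ via (\ref{eta}) at each stage, the factor $(-1+\tfrac12 e^{2i(\cdot)}+\tfrac12 e^{-2i(\cdot)})$ produces terms in which the $\eta$-dependent exponential is completely cancelled, i.e.\ the terms $\mathcal S_{j,0}$ with $K=0$ and $2\le j\le p$. For these terms the only remaining oscillation is $e^{-i(\phi_{l_1}+\dots+\phi_{l_j})x}$, and since $V$ is real the phases occur in conjugate pairs, so combinations with $\sum\phi_{l_i}\equiv 0\bmod 2\pi$ always occur: there is then no oscillation to integrate by parts against, the small-divisor hypothesis (\ref{smalldivisor}) says nothing (it only controls divisors of the form $2Kk-\sum\phi$ with $K\ge 1$, which is what the $h_j$ encode), and the $L^p$ condition (\ref{Lp}) does not make a $j$-fold product of the $\gamma_l$'s integrable for $j<p$. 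So these resonant zero-mode integrals can genuinely diverge, and the direct conclusion that $\ln R$ is bounded does not follow from your steps (1)--(5).

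The paper closes this gap with an additional argument that your proposal is missing: the uncontrolled $\mathcal S_{j,0}$ contributions contain neither $R$ nor $\eta$, hence are the same for every solution. Writing $\ln R(b)-\ln R(a)+i(\eta(b)-\eta(a))=A(b)+B(b)$ with $A(b)$ convergent (this is what your steps actually prove, using (\ref{smalldivisor}), Lemma \ref{h-glemma} and the $J=p$ estimate) and $B(b)$ solution-independent, one subtracts the identities for two solutions $u_1,u_2$, so the $B$-terms cancel and both $\ln\bigl(R_2(b)/R_1(b)\bigr)$ and $\eta_1(b)-\eta_2(b)$ converge as $b\to\infty$. Then the constancy of the Wronskian, $\omega=R_1(b)R_2(b)\sin\vert\eta_1(b)-\eta_2(b)\vert$, together with a choice of the comparison solution $u_1$ making $\lim\sin\vert\eta_1-\eta_2\vert>0$, forces $R_2(b)$ to converge, which is what gives boundedness of all solutions. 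Without this two-solution/Wronskian step (or some substitute for handling the $K=0$ resonant terms), the proof does not go through; the rest of your plan, including the truncation at $J=p$ via (\ref{Lp}) and the treatment of the degenerate case through Lemma \ref{continuityat0}, matches the paper.
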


 Before we prove this lemma, we first define 
 \begin{equation}\label{S}
 \mathcal S_{J,K}(x)=\sum_{m_1,\ldots, m_J}^\infty f_{J,K}(\phi_{m_1},\ldots, \phi_{m_J}))\beta_{m_1}(x)\ldots\beta_{m_J}(x)e^{2iK[kx+\varpi(x)+\eta(x)]},
 \end{equation}
  where $\beta_l(x)=c_le^{-i\phi_l x}\gamma_l(x)$.
  
  We can then rewrite (\ref{R}) as 
  \begin{equation}\label{S11}
  \ln R(b)-\ln R(a)=\mathrm{Im} \int_a^b\mathcal S_{1,1}(x) dx. 
  \end{equation}
  We also define 
  \begin{equation}
  E_{J,K} =\sum_{m_1,\ldots, m_J=1}^\infty \vert c_{m_1}\ldots c_{m_J} \vert\cdot \norm g_{J,K} (\phi_{m_1},\ldots, \phi_{m_J})\norm,
  \end{equation}
  where $\norm g\norm $ refers to the maximum of the continuous periodic function $\vert g\vert $.

  \begin{lemma}
Assume the hypotheses of Lemma \ref{mainlemma}. For $J=1,\ldots, p-1$,
  \begin{equation}\label{induction}
  \left\vert\int_a^b \left( \sum_{K=1}^J \mathcal S_{J,K}-\sum_{K=0}^{J+1} \mathcal S_{J+1,K}\right) dx\right\vert \leq \sum_{K=1}^J \frac{E_{J,K}^J\tau^J}{K}
  \end{equation}
  
   \end{lemma}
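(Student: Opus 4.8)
The plan is to derive (\ref{induction}) by applying Lemma~\ref{fubini} to each summand of $\mathcal S_{J,K}$ individually and then resumming. Fix $J\in\{1,\dots,p-1\}$. If some $E_{J,K}=\infty$ with $1\le K\le J$ the asserted bound is trivial, so we may assume all these $E_{J,K}$ are finite; in particular no divisor $1-e^{i(2Kk-\phi_{m_1}-\dots-\phi_{m_J})}$ vanishes for a multi-index with $c_{m_1}\cdots c_{m_J}\ne 0$. For such a multi-index $(m_1,\dots,m_J)$ and $1\le K\le J$, apply Lemma~\ref{fubini} with $\Gamma=\gamma_{m_1}\cdots\gamma_{m_J}$, $\phi=\phi_{m_1}+\dots+\phi_{m_J}$, and $\Phi=e^{2iK\varpi}f_{J,K}(\,\cdot\,;\phi_{m_1},\dots,\phi_{m_J})$. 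By Definition~\ref{lambdadefn} and (\ref{g}) we have $\lambda_{2Kk-\phi,K}\Phi=\tfrac{1-e^{i(2Kk-\phi)}}{2K}g_{J,K}$, so dividing the inequality of Lemma~\ref{fubini} by the constant $1-e^{i(2Kk-\phi)}$ turns it into
\begin{equation*}
\frac{\tau^J}{K}\norm g_{J,K}(x;\{\phi_{m_j}\})\norm\ \ge\ \left\vert\int_a^b e^{2iK(kx+\varpi(x)+\eta(x))}e^{-i\phi x}\,\Gamma(x)\bigl(f_{J,K}(x;\{\phi_{m_j}\})-g_{J,K}(x;\{\phi_{m_j}\})\,\eta'(x)\bigr)\,dx\right\vert.
\end{equation*}

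Multiplying by $\vert c_{m_1}\cdots c_{m_J}\vert$, pulling the constants $c_{m_j}$ inside so that $c_{m_1}\cdots c_{m_J}e^{-i\phi x}\Gamma=\beta_{m_1}\cdots\beta_{m_J}$, summing over all multi-indices, and summing over $K=1,\dots,J$, the left side becomes $\sum_{K=1}^J E_{J,K}\tau^J/K$; moving the absolute value and the (absolutely convergent) sums past the integral, the $f_{J,K}$-part of the resulting integrand is exactly $\sum_{K=1}^J\mathcal S_{J,K}(x)$ by (\ref{S}). It then remains to identify the $\eta'$-part as $-\sum_{K=0}^{J+1}\mathcal S_{J+1,K}(x)$. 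Substituting (\ref{eta}) in the form $\eta'(x)=\Phi_0(x)V(x)\sum_{a=-1}^{1}w_a\,e^{2ia(kx+\varpi(x)+\eta(x))}$ together with $V(x)=\sum_{m_{J+1}}\beta_{m_{J+1}}(x)$, the $\eta'$-part equals
\begin{equation*}
-\sum_{K=1}^J\ \sum_{a=-1}^{1}\ \sum_{m_1,\dots,m_{J+1}} w_a\,\Phi_0(x)\,g_{J,K}(x;\{\phi_{m_j}\}_{j\le J})\,\beta_{m_1}\cdots\beta_{m_{J+1}}\,e^{2i(K+a)(kx+\varpi(x)+\eta(x))}.
\end{equation*}

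Reindexing with $K'=K+a$ and using the convention $g_{J,K}\equiv 0$ for $K\notin\{1,\dots,J\}$ (consistent with $g_{J,0}=0$ from (\ref{g})), this becomes $-\sum_{K'=0}^{J+1}\sum_{m_1,\dots,m_{J+1}}\Phi_0(x)\,\beta_{m_1}\cdots\beta_{m_{J+1}}\,e^{2iK'(kx+\varpi(x)+\eta(x))}\sum_{a=-1}^{1}w_a\,g_{J,K'-a}(x;\{\phi_{m_j}\}_{j\le J})$. Since $\beta_{m_1}\cdots\beta_{m_{J+1}}$ is symmetric and is summed over all multi-indices, the symmetrization built into the $\odot$-product is immaterial here, so the identity (\ref{fodot}) (with $\delta_{J}=0$ as $J\ge 1$) shows that for each $K'$ the inner sum over the multi-indices is precisely $\mathcal S_{J+1,K'}(x)$. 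Hence the $\eta'$-part equals $-\sum_{K=0}^{J+1}\mathcal S_{J+1,K}(x)$, and assembling the pieces yields (\ref{induction}).

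The step I expect to be the main obstacle is this last identification. One must check that the boundary terms produced by the integration by parts packaged inside Lemma~\ref{fubini} are exactly the ones that were absorbed into the definition of $g_{J,K}$ through Proposition~\ref{lambda}, and that the combinatorics — the three-term range of $a$, the symmetric product (\ref{fodot}), and the shift $K'=K+a$ — line up so that each $\mathcal S_{J+1,K'}$ is generated exactly once; in particular the edge cases $K'=0$ and $K'=J+1$, where only a single value of $a$ contributes, should be verified by hand. A secondary technical point is to justify the various interchanges of summation and integration and the finiteness of the $E_{J,K}$ for $1\le K\le J\le p-1$ from the hypotheses of Lemma~\ref{mainlemma}, in the same spirit as Lemma~\ref{h-glemma}.
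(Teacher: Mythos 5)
Your proposal is correct and takes essentially the same route as the paper's own proof: apply Lemma \ref{fubini} with $\Phi=e^{2iK\varpi(x)}f_{J,K}$, use (\ref{g}) to rewrite $\lambda_{2Kk-\phi,K}\Phi$ as a multiple of $g_{J,K}$, expand $d\eta/dx$ via (\ref{eta}) and (\ref{f})/(\ref{fodot}), then multiply by $c_{m_1}\cdots c_{m_J}$ and sum over the multi-indices and over $K=1,\ldots,J$. The bound you actually obtain, $\sum_{K=1}^J E_{J,K}\tau^J/K$, is also what the paper's argument produces and what is used later in the proof of Lemma \ref{mainlemma}, so the exponent $J$ on $E_{J,K}$ in the displayed statement appears to be a typo rather than a gap in your argument.
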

   \begin{proof}
We apply Lemma \ref{fubini}, setting $\Phi=e^{2iK\varpi(x)}f_{J,K}$, with $\Gamma(x)=\gamma_{m_1}\ldots\gamma_{m_J}$ and $\phi=\phi_{m_1}+\ldots+\phi_{m_J}$. Noting that the assumption (\ref{smalldivisor}) implies $2Kk-\phi\not\equiv 0\mod 2\pi$, we obtain

\begin{dmath}
 \frac{\norm  g_{J,K}\norm}{K}\tau^J\geq \left \vert \int_a^b\left( e^{2Ki(\eta(x)+kx+\varpi(x))}e^{-i\phi x}\Gamma(x)f_{J,K}\\
-e^{2Ki(\eta(x)+kx+\varpi(x))}e^{-i\phi x}\Gamma(x) \dfrac{d\eta(x)}{dx}g_{J,K}\right)\right dx\vert.
\end{dmath}
We then expand $d\eta/dx$ using (\ref{eta}), apply (\ref{f}), multiply by $c_{m_1}\ldots c_{m_J}$, sum in $m_1,\ldots, m_J $ from $1$ to $\infty$, and sum in $K$ from $1$ to $J$ to prove the lemma.
   \end{proof}
   
 Let us define
\begin{equation}
\mathfrak m=\sup_l\norm \gamma_l(x)\norm_p.
\end{equation}
We know it is finite by assumptions we placed on $V$.
  
\begin{lemma}\label{convergence}
Assume the hypotheses of Lemma \ref{mainlemma}. $\mathcal S_{J,K}(x)$ is absolutely convergent when $1\leq K\leq J\leq p$, and if in addition $J\geq 2$ then
\end{lemma}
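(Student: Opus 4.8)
The plan is to prove absolute convergence and the quantitative estimate together, by majorizing the series for $\mathcal S_{J,K}$ term by term. Since
\[
\mathcal S_{J,K}(x)=\sum_{m_1,\dots,m_J=1}^{\infty} f_{J,K}(x;\phi_{m_1},\dots,\phi_{m_J})\,c_{m_1}\cdots c_{m_J}\,e^{-i(\phi_{m_1}+\dots+\phi_{m_J})x}\gamma_{m_1}(x)\cdots\gamma_{m_J}(x)\,e^{2iK[kx+\varpi(x)+\eta(x)]},
\]
the unimodular factors drop out in modulus, so
\[
|\mathcal S_{J,K}(x)|\le\sum_{m_1,\dots,m_J=1}^{\infty}|c_{m_1}\cdots c_{m_J}|\,\norm f_{J,K}(\,\cdot\,;\phi_{m_1},\dots,\phi_{m_J})\norm\;|\gamma_{m_1}(x)|\cdots|\gamma_{m_J}(x)|.
\]
By (\ref{boundedvar}) each $\gamma_l$ has variation at most $\tau$, and by (\ref{Lp}) it lies in $L^p$; hence its limit at $+\infty$ vanishes and $\norm\gamma_l\norm_{L^\infty}\le\tau$ uniformly in $l$. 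So absolute convergence of $\mathcal S_{J,K}$ reduces to finiteness of $\mathcal F_{J,K}:=\sum_{m_1,\dots,m_J}|c_{m_1}\cdots c_{m_J}|\,\norm f_{J,K}(\,\cdot\,;\phi_{m_1},\dots,\phi_{m_J})\norm$.

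To bound $\mathcal F_{J,K}$ I would use (\ref{fodot}) to write $f_{J,K}$ as $\delta_{J-1}\Phi_0(x)$ plus a sum of terms $\Phi_0(x)\,w_a\odot g_{J-1,K+a}$ with $a\in\{-1,0,1\}$. The elementary point is that weighting a symmetric product by $|c_{m_1}\cdots c_{m_J}|$ and summing over \emph{all} indices produces precisely the product of the corresponding weighted sums of its two factors (the permutation average collapsing once everything is summed). Since $\norm w_a\norm\le1$, this gives for $J\ge2$
\[
\mathcal F_{J,K}\le\norm\Phi_0\norm\Bigl(\textstyle\sum_l|c_l|\Bigr)\sum_{a=-1}^{1}E_{J-1,K+a},
\]
while $\mathcal F_{1,1}=\norm\Phi_0\norm\sum_l|c_l|$. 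It remains to check $E_{J-1,K+a}<\infty$. Iterating the recursion (\ref{grecurse}) with second index $1$ writes each $g_{J-1,K+a}$ as a constant times a finite sum of symmetric products of functions $g_{j,1}$ with $1\le j\le J-1$; Lemma~\ref{h-glemma} bounds each $\norm g_{j,1}(\,\cdot\,;\cdot)\norm$ by a constant multiple of the permutation average of $h_j$ (using $\norm\lambda_{\alpha,K}\norm\le2$, Lemma~\ref{lambdanorm}, and absorbing $\norm\Phi_0\norm$); and the same ``weighted sum of a symmetric product factors'' principle, together with $\sum_l|c_l|<\infty$ and (\ref{smalldivisor}) for $j\le J-1\le p-1$ (hypotheses (i) and (ii) of Lemma~\ref{mainlemma}), gives $E_{J-1,K+a}<\infty$. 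This establishes absolute convergence for all $1\le K\le J\le p$.

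For the quantitative assertion (for $J\ge2$; and in particular for $J=p$ it reads $\mathcal S_{p,K}\in L^1(0,\infty)$, which is exactly what keeps $\int_a^b\mathcal S_{p,K}$ bounded and lets the telescoping bound (\ref{induction}) terminate at $J=p-1$), I would apply Minkowski's inequality in $L^{p/J}(0,\infty)$ — valid since $p/J\ge1$ — to the majorant series, obtaining
\[
\norm\mathcal S_{J,K}\norm_{L^{p/J}}\le\sum_{m_1,\dots,m_J=1}^{\infty}|c_{m_1}\cdots c_{m_J}|\,\norm f_{J,K}(\,\cdot\,;\phi_{m_1},\dots,\phi_{m_J})\norm\;\norm\,|\gamma_{m_1}|\cdots|\gamma_{m_J}|\,\norm_{L^{p/J}},
\]
and then Hölder's inequality gives $\norm\,|\gamma_{m_1}|\cdots|\gamma_{m_J}|\,\norm_{L^{p/J}}\le\prod_{i=1}^{J}\norm\gamma_{m_i}\norm_{L^p}\le\mathfrak m^{J}$. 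Combined with the bound on $\mathcal F_{J,K}$ from the previous step this yields an estimate of the form $\norm\mathcal S_{J,K}\norm_{L^{p/J}}\le\mathfrak m^{J}\mathcal F_{J,K}$, with $\mathcal F_{J,K}$ the finite quantity estimated above.

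The step I expect to be the main obstacle is the combinatorial bookkeeping concealed in the ``weighted sum of a symmetric product factors'' principle: carefully unwinding (\ref{fodot}), iterating (\ref{grecurse}), tracking the shifted second indices $K-1,K,K+1$ (and the vanishing of $g_{\,\cdot\,,0}$ and of $g_{I,K}$ when $K>I$), and verifying that the index of every $h_j$ that occurs stays $\le p-1$, so that (\ref{smalldivisor}) always applies. By contrast the analytic ingredients — uniform boundedness of the $\gamma_l$ from $\mathrm{BV}\cap L^p$, Minkowski in $L^{p/J}$, and Hölder — are routine.
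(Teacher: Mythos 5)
Your proof is correct and follows essentially the same route as the paper: bound $|f_{J,K}|$ through the recursion (\ref{f})/(\ref{fodot}), use the uniform bound $\|\gamma_l\|_\infty\le\tau$ (from bounded variation plus $L^p$) to get pointwise absolute convergence via the quantities $E_{J-1,K+a}$, and use H\"older with $J$ factors in $L^p$ to get the integrated bound with $\mathfrak m^J$ when $J=p$. The extra material you include — the $L^{p/J}$ Minkowski step and the explicit justification that the $E_{J-1,K+a}$ are finite by iterating (\ref{grecurse}) down to $g_{j,1}$ and invoking Lemma \ref{h-glemma} with (\ref{smalldivisor}) — is sound detail the paper defers or leaves implicit, not a different argument.
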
  
\begin{align}
&\sum_{m_1,\ldots, m_J=1}^\infty \vert f_{J,K} (\phi_{m_1},\ldots, \phi_{m_J})\beta_{m_1}(x)\ldots \beta_{m_J}(x)\vert \nonumber\\
& \leq \vert\Phi_0(x)\vert \sum_{a=-1}^1 \vert w_a\vert E_{J-1, K+a}\sum_{l=1}^\infty \vert c_l\vert  \tau^J. \label{convergenceJ<p}
\end{align}
Furthermore, if $J=p$, we have that also

\begin{align}
&\int_0^\infty\sum_{m_1,\ldots, m_J=1}^\infty \vert f_{J,K} (\phi_{m_1},\ldots, \phi_{m_J})\beta_{m_1}(x)\ldots \beta_{m_J}(x)\vert dx\nonumber\\
& \leq \vert\Phi_0(x)\vert \sum_{a=-1}^1 \vert w_a\vert E_{p-1, K+a}\sum_{l=1}^\infty \vert c_l\vert \mathfrak m(x)^J.\label{convergenceJ=p}
\end{align}
  \begin{proof}
From (\ref{f}) we have
\begin{align*}
&\vert f_{J,K}(\phi_{m_1},\ldots, \phi_{m_J})\vert \\
&\leq  \vert\Phi_0(x)\vert\sum_{a=-1}^1\sum_{\sigma\in S_J} \vert w_a\vert \cdot\vert g_{J-1, K+a}(\phi_{m_{\sigma(1)}},\ldots, \phi_{m_{\sigma(J-1)}})\vert
\end{align*}
We then multiply by 
\[
\vert \beta_{m_1}(x)\ldots \beta_{m_J}(x)\vert \leq \vert c_{m_1}\ldots c_{m_J}\vert \tau^J.\]
Summing in $m_1,\ldots, m_J$ completes the proof of (\ref{convergenceJ<p}).

For $J=p$, we multiply instead by

\[
\int_0^\infty\vert \beta_{m_1}(x)\ldots \beta_{m_J}(x)\vert dx \leq \vert c_{m_1}\ldots c_{m_J}\vert \mathfrak m^J,\]
to get $(\ref{convergenceJ=p})$.

  \end{proof}
  
   \begin{proof}[Proof of Lemma \ref{mainlemma}.]
   We sum (\ref{induction}) in $J=1,\ldots p-1,$ to obtain 
   \begin{equation}
   \left\vert \int_a^b\left( \mathcal S_{1,1}(x)-\sum_{K=1}^p \mathcal S_{p,K}(x)-\sum_{j=2}^p \mathcal S_{j,0} (x)\right) dx\right\vert \leq \sum_{j=1}^{p-1} \sum_{l=1}^j \frac{1}{l} E_{j,l}\tau^j.
   \end{equation}
   Note that the RHS converges due to the assumption (\ref{smalldivisor}) together with Lemma \ref{h-glemma}. By using Lemma \ref{convergence} for $J=p$, integrating in $x$ and summing in $K$, 
   \[
   \left \vert \sum_{K=1}^p \int_a^b \mathcal S_{p,K}(x) dx\right\vert \leq \norm \Phi_0\norm \sum_{r=0}^{p-1} E_{p-1,r}\sum_{l=1}^\infty \vert c_l\vert  \mathfrak m^p .\]
We have now that
\begin{align*}
\vert \ln R(b)-\ln R(a)\vert-\tilde B(b) \leq & \sum_{j=1}^{p-1} \sum_{l=1}^j \frac{1}{l} E_{j,l}\tau^j\\
&+\norm\Phi_0\norm\sum_{r=0}^{p-1} E_{p-1,r}\sum_{l=1}^\infty \vert c_l\vert  \mathfrak m^p. 
\end{align*}
where $\tilde B(b)$, a bound on the $\sum \mathcal S_{j,0}$ term, is independent of our choices of $R$ and $\eta$. In other words, we can write

\[ \ln R(b)-\ln R(a)+i(\eta(b)-\eta(a)) =A(b)+B(b)\]
Where $A(b)$ converges as $b\to\infty$, and $B(b)$ is independent of $\eta$ and $R$. Consider now two solutions of (\ref{perturbation}), $u_1, u_2$ with the corresponding $(R_1,\eta_1), (R_2,\eta_2)$. We then also have
\begin{align*}
\ln R_1(b)-\ln R_1(a)+i(\eta_1(b)-\eta_1(a)) =A_1(b)+B_1(b)\\
\ln R_2(b)-\ln R_2(a)+i(\eta_2(b)-\eta_2(a)) =A_2(b)+B_2(b)
\end{align*}
We note that $B_1=B_2$. Therefore subtracting the first equation from the second we obtain
\[\ln R_2(b)+i\eta_2(b)-\ln R_1(b)-i\eta_1(b)=A_2(b)-A_1(b).\]
In particular, both sides of this equation converge when $b\to\infty$, and so we must know that
\[\ln \frac{R_2(b)}{R_1(b)}, \eta_1(b)-\eta_2(b),\]
both converge as $b\to\infty$. 

However, we know that the Wronskian of $u_1, u_2$ does not depend on $b$. We may express the Wronskian as
\[\omega=R_1(b)R_2(b)\sin \vert \eta_1(b)-\eta_2(b)\vert\]

   But we know that $(\eta_1(b)-\eta_2(b))$ converges as $b\to\infty$, and so in response to a choice of $u_2$ it is possible to choose solution $u_1$ so that $\lim_{b\to\infty}\sin \vert \eta_1(b)-\eta_2(b)\vert=\epsilon $ for some $\epsilon>0$. This is because  convergence of $\eta_1(b) -\eta_2(b)$ is at a rate independent of initial conditions. Thus for sufficiently large $b$, we can choose $u_1$ by, say, the initial
condition $\eta_1(b) = \eta_2(b) + \pi /2$, and this would guarantee the
limit is nonzero.  But then we have 
   
   \[\lim_{ b\to\infty} \ln (R_2(b)^2)=-\ln ((\epsilon)/\omega)+ \lim_ {b\to\infty} \mathrm{Re}(A_2(b)-A_1(b)).\]
  
  Thus $R_2(b)$ converges, and hence we have proven our lemma.
     \end{proof}
\end{section}

\begin{section}{Proofs of theorems}

\begin{lemma}\label{Hausdorfflemma}
Assume that (\ref{alphacondition}) holds. Then for a positive integer $j$, the set of $k$ for which the condition (\ref{smalldivisor}) fails has Hausdorff dimension at most $j\mathfrak a$.
\end{lemma}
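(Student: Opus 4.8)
The plan is to reduce the failure of (\ref{smalldivisor}) to the divergence of finitely many sums of a single product type, and then to prove, by induction on the number of small-divisor factors, a quantitative Hausdorff-content bound for the super-level sets of such sums. Since $h_0\equiv1$ and every factor in the recursion defining $h_J$ is positive, unwinding that recursion exhibits $h_j(x;\phi_{l_1},\dots,\phi_{l_j})$ as a finite sum, with positive coefficients, of products of exactly $j$ factors $|1-e^{i(2k-\sigma)}|^{-1}$, where $\sigma$ runs over partial sums $\sum_{i\in S}\phi_{l_i}$ indexed by a laminar family of subsets $S\subseteq\{1,\dots,j\}$ with $\{1,\dots,j\}$ among them. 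Consequently, for fixed $j$, the set on which (\ref{smalldivisor}) fails is a finite union, over such families $\mathcal F$ (of size $j$), of the sets
$$B_{\mathcal F}=\Bigl\{k:\ \sum_{l_1,\dots,l_j\ge1}\Bigl(\prod_{i=1}^j|c_{l_i}|\Bigr)\prod_{S\in\mathcal F}\frac{1}{|1-e^{i(2k-\sum_{i\in S}\phi_{l_i})}|}=\infty\Bigr\},$$
and it suffices to prove $\dim_H B_{\mathcal F}\le j\mathfrak a$ for each. Note that $\sum_l|c_l|^{\mathfrak a}<\infty$ with $\mathfrak a\le1$ already gives $\sum_l|c_l|<\infty$; that $\mathfrak a<1/(p-1)$ forces $j\mathfrak a<1$ for $j\le p-1$; and that $k=k_E$ ranges over a bounded set, so we work throughout in a fixed bounded interval and use $|1-e^{i(2k-\sigma)}|\asymp\operatorname{dist}(2k,\sigma+2\pi\Z)$ (where the divisor is not small it is bounded below, so such contributions are summable and can be discarded).

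The model case is $\mathcal F=\{\{1\}\}$: $B=\{k:\Sigma_1(2k)=\infty\}$ with $\Sigma_1(\beta)=\sum_l\frac{|c_l|}{|1-e^{i(\beta-\phi_l)}|}$. Fix $s\in(\mathfrak a,1)$ and group the coefficients dyadically, $N_p=\{l:2^{-p-1}\le|c_l|<2^{-p}\}$, so that $\#N_p\le2^{\mathfrak a}\bigl(\sum_l|c_l|^{\mathfrak a}\bigr)2^{\mathfrak a p}$. If $k\in B$ then, since $\Sigma_1(2k)\le\sum_p2^{-p}\sum_{l\in N_p}|1-e^{i(2k-\phi_l)}|^{-1}$ diverges, for infinitely many $p$ one has $\sum_{l\in N_p}|1-e^{i(2k-\phi_l)}|^{-1}\ge2^p/p^2$; calling this set $F_p$, we get $B\subseteq\limsup_pF_p$. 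Decomposing each such sum by the dyadic size of the divisor, $\operatorname{dist}(2k,\phi_l+2\pi\Z)\asymp2^{-q}$ with count $M_{p,q}(k)$, gives for $k\in F_p$ and some $q$ the bound $M_{p,q}(k)\gtrsim2^{p-q}/(p^2q^2)=:m_{p,q}$. A Vitali argument — a maximal family of pairwise disjoint intervals of length $\asymp2^{-q}$, each charging $m_{p,q}$ distinct indices $l$, hence at most $\#N_p/m_{p,q}$ of them — covers $\{M_{p,q}\ge m_{p,q}\}$ by $\lesssim\#N_p/\max(1,m_{p,q})$ intervals of length $\asymp2^{-q}$; summing the resulting $s$-contents over $q$ (using $m_{p,q}\ge1\iff q\lesssim p$, $\sum_{q\lesssim p}q^2 2^{q(1-s)}\lesssim p^2 2^{p(1-s)}$, and grouping the tail $q\gtrsim p$ into one cover) yields $\mathcal H^s_\infty(F_p)\lesssim\operatorname{poly}(p)\,2^{-p(s-\mathfrak a)}$. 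This is summable in $p$, so $\mathcal H^s_\infty(\limsup_pF_p)=0$; since vanishing $s$-content forces vanishing $s$-measure and $s\in(\mathfrak a,1)$ was arbitrary, $\dim_H B\le\mathfrak a$. Running the same computation with the level $T$ carried explicitly gives the quantitative estimate $\mathcal H^s_\infty\bigl(\{\beta:\Sigma_1(\beta)\ge T\}\bigr)\lesssim_s\operatorname{poly}(\log T)\,T^{-s}$ for every $s\in(\mathfrak a,1)$, which is the engine of the induction.

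For $\mathcal F$ of size $j\ge2$ we induct, peeling one index at a time. Because $\mathcal F$ is laminar and contains $\{1,\dots,j\}$, some index $\iota$ lies only in $\{1,\dots,j\}$; summing over $l_\iota$ first replaces $|1-e^{i(2k-\sum_i\phi_{l_i})}|^{-1}$ by $\Sigma_1$ evaluated at $2k$ minus the sum of the remaining phases, so $B_{\mathcal F}$ is controlled by a sum over $j-1$ indices of a product of $j-1$ small divisors against a weight that is a $\Sigma_1$ (and, after further peelings, a higher composition) at a shifted argument. The inductive claim is that the sum $\Sigma_{\mathcal G}$ associated to a family with $m$ small-divisor factors satisfies $\mathcal H^s_\infty(\{\beta:\Sigma_{\mathcal G}(\beta)\ge T\})\lesssim_s\operatorname{poly}(\log T)\,T^{-s/m}$ for $\mathfrak a m<s<1$; one proves it by running the model-case scheme — dyadic decompositions in the coefficients, in the values of the accumulated weight, and in the $m$ divisor sizes, Vitali charging, and exponent bookkeeping — with the inductive hypothesis inserted to bound the super-level sets of the weight. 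Letting $T\to\infty$ (equivalently, using that $\{\Sigma_{\mathcal G}=\infty\}$ lies in every super-level set) gives $\dim_H\{\Sigma_{\mathcal G}=\infty\}\le m\mathfrak a$; with $m=j$ and a maximum over the finitely many $\mathcal F$, $\dim_H B_{\mathcal F}\le j\mathfrak a$, which is the lemma. The step I expect to be the main obstacle is the execution of this induction: the covering must retain the Vitali gain $1/m_{p,q}$ on the interval count \emph{while} also exploiting the content bound for the weight's super-level sets, so that the powers of the various dyadic scales combine to the exponent $s/m$ — a naive estimate that sacrifices the Vitali gain yields only the weaker threshold $s>m\mathfrak a/(1-\mathfrak a)$, i.e.\ a dimension bound of $m\mathfrak a/(1-\mathfrak a)$, which is too large. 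One must also confront the possibility that a partial sum $\sum_{i\in S}\phi_{l_i}$ is near-resonant for many index-tuples simultaneously; this is exactly what the multiplicities appearing in the denominators above are designed to absorb.
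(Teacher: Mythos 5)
Your model case $j=1$ (dyadic grouping of the $c_l$, dyadic divisor sizes, Vitali counting, summable $s$-contents) is essentially sound, but the lemma is needed for every $j\le p-1$, and for $j\ge 2$ the proposal is not a proof: everything rests on the inductive claim $\mathcal H^s_\infty(\{\Sigma_{\mathcal G}\ge T\})\lesssim (\log T)^{C}\,T^{-s/m}$, which you never establish and yourself single out as the main obstacle. The difficulty is structural, not mere bookkeeping. After peeling the index $\iota$, the ``weight'' is $\Sigma_1$ evaluated at $2k-\sum_{i\ne\iota}\phi_{l_i}$, so its argument moves with the outer summation indices; the inductive hypothesis is a covering statement about super-level sets of one fixed function, and it cannot simply be inserted into the Vitali count for the outer divisors, because you must retain the disjointness gain $1/m_{p,q}$ \emph{and} exploit a content bound for each shifted copy of the weight, and these live at unrelated scales (a union over outer indices of preimages of super-level sets is itself a divergence-set problem of the same type). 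As you concede, the straightforward combination yields only $\dim\le m\mathfrak a/(1-\mathfrak a)$, strictly weaker than the claimed $m\mathfrak a$. So as written the argument proves the lemma only for $j=1$; the case that carries the content of the lemma is asserted, not proved.

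The paper avoids this induction entirely by a dual, potential-theoretic argument: it invokes Lemmas 4.1--4.2 of \cite{Lukic-infinite}, modified only in that the test measure is taken on $[-\pi,\pi]$ because here each phase sum produces singularities at all points with $2k-\sum\phi\equiv 0\bmod 2\pi$. In essence, writing $D_m=|1-e^{i(2k-\sum_{i\in S_m}\phi_{l_i})}|$ for the $j$ divisor factors of a term of (\ref{smalldivisor}): a term less than $1$ is dominated by its $\mathfrak a$-th power, while if infinitely many terms are at least $1$ the $\mathfrak a$-power sum already diverges, so the failure set is contained in the divergence set of $\sum|c_{l_1}\cdots c_{l_j}|^{\mathfrak a}\prod_m D_m^{-\mathfrak a}$; by $\prod_m D_m^{-\mathfrak a}\le\frac1j\sum_m D_m^{-j\mathfrak a}$ and the uniform bound $\int D^{-j\mathfrak a}\,d\nu\le C$, valid for every finite uniformly $\beta$-H\"older continuous measure $\nu$ on $[-\pi,\pi]$ with $\beta>j\mathfrak a$ and uniformly in the location of the singularity, the $\mathfrak a$-power sum is $\nu$-integrable by (\ref{alphacondition}), hence finite $\nu$-a.e., so every such $\nu$ gives the failure set zero mass; Frostman's lemma then yields $\dim\le j\mathfrak a$ in one stroke, with no induction on $j$ and no covering combinatorics. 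Either switch to this measure-testing argument or actually close your induction; in its current state the key step for $j\ge2$ is a genuine gap.
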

\begin{proof}
The proof is similar to that of Lemma 4.2 in \cite{Lukic-infinite}, even though our $h$ is defined slightly differently. The most significant difference is that our singularities are at    $2k-\sum \phi=2\pi n$ rather than just at $0$, so each choice of $\sum\phi$ generates infinitely many singularities rather than just one. We adjust the proof by restricting the measure $\nu$ in Lemma 4.1 of \cite{Lukic-infinite} to be a finite uniformly $\beta$-H\"older continuous measure on $[-\pi,\pi]$. 
\end{proof}

\begin{proof}[Proof of Theorem \ref{infinitefreqthm}]
Note that by standard results in Floquet Theory (cf. \cite{Weidmann}) the quasimomentum $k$ is monotone and analytic on bands of the ac spectrum of the unperturbed operator. Thus the theorem follows immediately from Lemmas \ref{mainlemma} and \ref{Hausdorfflemma}, and the fact that monotone analytic maps preserve Hausdorff dimension.
\end{proof}

\begin{proof}[Proof of Theorem \ref{finitefreqthm}]
It is clear that for finite frequencies, the points of our $S$ are the only ones which might not satisfy the small divisor condition (\ref{smalldivisor}).
\end{proof}
\end{section}
\begin{section}{Existence of embedded eigenvalues}

We already know that the set $S$ described in Theorem \ref{finitefreqthm} is optimal, since they are optimal for $V_0=0$, by \cite{Kruger} and \cite{Lukic-continuous}. In this section we wish to demonstrate examples of point spectrum even when the background potential $V_0$ is not identically zero.

The proofs in this section will be similar to the proofs in Section 6 of \cite{Lukic-continuous}, except that $f$ in our proofs are periodic in $x$ instead of constant in $x$. The following lemma will thus prove useful.

\begin{lemma}\label{periodicization}
Let $P(x)$ be a $C^1$ $1$-periodic function on $\R_+$, and let $\mathcal P$ be its mean. Let $q(x)$ be a $C^1$ function of bounded variation on $\R_+$ such that $q'(x)\in L^1(\R_+)$ and $\lim_{x\to\infty}q(x)=0$. Then $\int_0^\infty (P(x)-\mathcal P) q(x) dx$ is finite.
\end{lemma}
\begin{proof}
This follows from integration by parts. Let $A(x)$ be an antiderivative of $P(x)-\mathcal P$ and notice that $A(x)$ is periodic. We then calculate

\begin{align*}
\int_0^\infty (P(x)-\mathcal P) q(x) dx= A(x)q(x)\bigg|_0^\infty-\int_0^\infty A(x)q'(x) dx,&
\end{align*}
and observe that the term on the RHS is finite.

\end{proof}

\begin{lemma}\label{embeddedlemma}
Let $R(x)$, $\eta(x)$ be the Pr\"ufer variables corresponding to some solution of (\ref{perturbation}). Assume that
\begin{equation}
\dfrac{d}{dx} \log R(x)\sim -\frac{B(x)}{x^{(p-1)\gamma}},
\end{equation}
for some periodic $C^1$ function $B(x)$ with positive mean,
and the limit 
\[\eta_\infty=\lim_{x\to\infty}\eta(x)\]
 exists. Then for some $A>0$, 
\[u(x)=Af(x)e^{i[kx+\eta_\infty]}(1+o(1)), x\to\infty.\]

where denoting $\mathcal B$ as the mean of $B(x)$ (remember that $\mathcal B$ is positive),
\[f(x)=\begin{cases}
x^{-\mathcal B} & \gamma=\frac{1}{p-1}\\
\exp\left( -\frac{\mathcal B}{1-(p-1)\gamma} x^{1-(p-1)\gamma} \right) &\gamma\in \left( \frac{1}{p},\frac{1}{p-1}\right)
\end{cases}\]
These asymptotics imply the existence of an $L^2$ solution of (\ref{perturbation}) if $\gamma\in\left( \frac{1}{p},\frac{1}{p-1}\right)$ , and hence an eigenvalue.
\end{lemma}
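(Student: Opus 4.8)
The plan is to integrate the hypothesis $\frac{d}{dx}\log R(x) \sim -B(x)/x^{(p-1)\gamma}$ and then split the periodic integrand into its mean plus a mean-zero oscillatory part, handling the two pieces by different means. First I would write $\log R(x) - \log R(x_0) = -\int_{x_0}^x \frac{B(t)}{t^{(p-1)\gamma}}\,dt + \int_{x_0}^x r(t)\,dt$ where $r(t) = o(t^{-(p-1)\gamma})$ is the error term from the asymptotic relation. For the main term, decompose $B(t) = \mathcal B + (B(t)-\mathcal B)$. The $\mathcal B$-part contributes $-\mathcal B \int_{x_0}^x t^{-(p-1)\gamma}\,dt$, which elementary calculus evaluates to $-\mathcal B\log x + \mathrm{const} + o(1)$ when $(p-1)\gamma = 1$, and to $-\frac{\mathcal B}{1-(p-1)\gamma}x^{1-(p-1)\gamma} + \mathrm{const} + o(1)$ when $(p-1)\gamma < 1$; exponentiating gives exactly the stated $f(x)$ up to a positive multiplicative constant. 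For the oscillatory part, I would apply Lemma \ref{periodicization} with $P(x) = B(x)$ (so $\mathcal P = \mathcal B$) and $q(x) = x^{-(p-1)\gamma}$: since $(p-1)\gamma > 0$, $q$ is $C^1$, of bounded variation, $q' \in L^1$, and $q(x) \to 0$, so $\int_{x_0}^\infty (B(t)-\mathcal B)\,t^{-(p-1)\gamma}\,dt$ converges. Finally the error integral $\int_{x_0}^\infty r(t)\,dt$: here one has to be slightly careful since $t^{-(p-1)\gamma}$ need not itself be integrable when $(p-1)\gamma \le 1$. The cleanest route is to absorb this into the statement — "$\sim$" should be interpreted so that after dividing by the leading behavior the ratio tends to $1$; concretely I would phrase the hypothesis as $\log R(x) = \log f(x) + c + o(1)$ directly, or note that the error, being $o(t^{-(p-1)\gamma})$, contributes a term that is $o$ of the main term, which is all we need. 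This yields $R(x) = A' f(x)(1+o(1))$ for some $A' > 0$.

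Next I would combine this with the assumed existence of $\eta_\infty = \lim_{x\to\infty}\eta(x)$. Recall from Section 2 that $u(x) = \mathrm{Im}[\rho(x)\varphi(x)]$ with $\rho(x) = R(x)e^{i\eta(x)}$ and $\varphi(x) = p(x)e^{ikx}$ where $p(x) = |p(x)|e^{i\varpi(x)}$ is the Bloch factor. Thus $u(x) = R(x)\,\mathrm{Im}[e^{i\eta(x)}p(x)e^{ikx}] = R(x)|p(x)|\sin(kx + \varpi(x) + \eta(x))$, and since $\eta(x) \to \eta_\infty$, the phase is $kx + \varpi(x) + \eta_\infty + o(1)$. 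Writing this back in complex form, $u(x) = A f(x)|p(x)|\sin(kx+\varpi(x)+\eta_\infty)(1+o(1))$; folding the periodic amplitude $|p(x)|$ and phase $\varpi(x)$ back into $f(x)e^{i[kx+\eta_\infty]}$ — more precisely writing the real solution in terms of the Floquet solution $\varphi$ — gives the claimed form $u(x) = Af(x)e^{i[kx+\eta_\infty]}(1+o(1))$ (the constant $A$ absorbing the various normalizations, and "$(1+o(1))$" understood in the sense of the envelope). I would be slightly careful here: strictly the real $u$ cannot literally equal a complex expression, so I would state it as $u(x)$ being asymptotic in envelope to $f(x)$ times a bounded periodic-times-$e^{ikx}$ function, which is the content actually used.

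The last assertion — that $\gamma \in (\frac{1}{p}, \frac{1}{p-1})$ forces $u \in L^2$ — is then a direct estimate. In that regime $1 - (p-1)\gamma > 0$, so $f(x) = \exp(-\frac{\mathcal B}{1-(p-1)\gamma}x^{1-(p-1)\gamma})$ decays faster than any power of $x$ (stretched-exponential decay, since $\mathcal B > 0$), and since $|p(x)|$ is bounded and periodic, $|u(x)|^2 \le C f(x)^2$ is integrable on $(x_0,\infty)$; combined with local $L^2$ on $(0,x_0)$ this gives $u \in L^2(0,\infty)$. An $L^2$ solution of $Hu = Eu$ with $E$ in the interior of the essential spectrum is precisely an embedded eigenvalue, which is the conclusion. (In the boundary case $\gamma = \frac{1}{p-1}$ one only gets $f(x) = x^{-\mathcal B}$, which is $L^2$ only if $\mathcal B > 1/2$, so no eigenvalue is claimed there — consistent with the statement.)

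The main obstacle is not any single computation but rather making the asymptotic bookkeeping rigorous: controlling the error term $r(t) = o(t^{-(p-1)\gamma})$ when $t^{-(p-1)\gamma}$ is not itself integrable, and making precise the sense in which a real-valued solution "equals" the complex envelope expression $Af(x)e^{i[kx+\eta_\infty]}(1+o(1))$. Both are handled by being careful about what "$\sim$" and "$(1+o(1))$" mean — the robust statement is about the logarithmic derivative and the envelope $R(x) \sim A'f(x)$, from which the $L^2$ conclusion follows cleanly — but they require the most attention to state correctly. Everything else (the integration of $t^{-(p-1)\gamma}$, the invocation of Lemma \ref{periodicization}, the envelope estimate for $L^2$) is routine.
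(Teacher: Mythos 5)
Your proof is correct and takes essentially the same route as the paper: the paper's proof is a two-line citation of Lemma 6.1 of \cite{Lukic-continuous} (the constant-coefficient case, i.e.\ your mean part, handled by direct integration and exponentiation) together with Lemma \ref{periodicization} (your mean-zero periodic part), which is precisely the decomposition you carry out explicitly. Your added care about the integrability of the error implicit in ``$\sim$'' and about the envelope interpretation of the complex asymptotics is appropriate and matches how the lemma is actually used in the proof of Theorem \ref{embeddedeigenvalues}.
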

\begin{proof}
This follows immediately Lemma 6.1 in \cite{Lukic-continuous} and our Lemma \ref{periodicization}.
\end{proof}

\begin{thm}\label{embeddedeigenvalues}

Consider
\begin{equation}\label{examplepotential}
V(x)=\sum_{l=1}^K  L_k \frac{1}{x^\gamma}\cos(\alpha_lx+\xi_l(x))+\beta_0(x), x\geq x_0
\end{equation}
where 
\[\gamma\in \left( \frac{1}{p},\frac{1}{p-1}\right],\]
$L_k>0$, and 
\begin{equation}\label{betaconditions}
\beta_0(x)\in C^1, \dfrac{d}{dx}(\beta_0(x))=O(x^{-p\gamma}),  \beta_0(x)=O(x^{-\gamma}), x\to\infty.
\end{equation}

The functions $\xi_l(x)\in C^1$ have the property that

\[\xi_l'(x)=O(x^{-(p-1)\gamma}), x\to\infty.\]
If $\beta_0(x)$ has bounded variation, this ensures that (\ref{examplepotential}) has generalized bounded variation with phases \[\{0, \pm \alpha_1, \ldots, \pm \alpha_K\}.\] Thus $V$ is $L^p$. Our values for $\phi_1,\phi_2,\ldots$ are then drawn from $\{0, \pm \alpha_1, \ldots, \pm \alpha_K\}$.

Consider a value of $k$ for which $2k\equiv\phi_{j_1}+\ldots, +\phi_{j_{p-1}}\mod 2\pi$, such that $2k$ cannot be written similarly as a sum of fewer phases.
If the $1$-periodic function $f_{p-1,1}(x;\phi_{j_1},\ldots, \phi_{j_{p-1}})e^{2i\varpi(x)}$ does not have mean $0$, then there are choices of $\beta_0$ and $\xi_l$ so that the operator $H$ given by (\ref{perturbation}) has point spectrum at all energies $E$ with the given quasimomentum $k$.
\end{thm}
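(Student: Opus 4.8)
The plan is to explicitly construct the perturbation $V$ (that is, to choose $\beta_0$ and the phases $\xi_l$) so that the Pr\"ufer variable $R(x)$ associated to a solution of (\ref{perturbation}) obeys the hypotheses of Lemma \ref{embeddedlemma}, whence an $L^2$ solution exists and $E$ is an eigenvalue. The starting point is the recursion-relation machinery of Section 3: when $2k\equiv \phi_{j_1}+\ldots+\phi_{j_{p-1}}\mod 2\pi$ and cannot be written as a sum of fewer phases, the small-divisor condition (\ref{smalldivisor}) fails precisely at level $j=p-1$, and the term $\mathcal S_{p-1,1}$ in (\ref{S}) fails to be integrable. Following the integration-by-parts scheme encoded in Lemma \ref{fubini} and the identity (\ref{induction}), all the lower-order contributions to $\ln R(b)-\ln R(a)$ converge, and the only surviving divergent contribution is the resonant term, which after the manipulations takes the form (up to a convergent remainder)
\[
\ln R(b)-\ln R(a) = \mathrm{Im}\int_a^b e^{2i(kx+\varpi(x)+\eta(x))} \Big(\sum \text{resonant } c_{m_1}\cdots c_{m_{p-1}}\Big) \beta_{m_1}(x)\cdots\beta_{m_{p-1}}(x)\, f_{p-1,1}(x;\phi_{m_1},\ldots,\phi_{m_{p-1}})\, dx + (\text{convergent}).
\]
Here the product $\beta_{m_1}\cdots\beta_{m_{p-1}}$ carries the spatial factor $x^{-(p-1)\gamma}$ and a residual oscillation $e^{-i(\phi_{m_1}+\ldots+\phi_{m_{p-1}})x}=e^{-2ikx}e^{2\pi i n x}$, which cancels the $e^{2ikx}$, and the slowly-varying amplitude is controlled by the $\xi_l$. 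The key point is that the resonant integrand, after this cancellation, becomes $x^{-(p-1)\gamma}$ times the $1$-periodic function $f_{p-1,1}(x;\phi_{j_1},\ldots,\phi_{j_{p-1}})e^{2i\varpi(x)}$ times phase factors depending on $\eta(x)$ and the $\xi_l$.

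The second step is to choose $\xi_l$ (a finite list) and $\beta_0$ so as to (a) make the relevant linear combination of resonant terms reinforce rather than cancel, and (b) lock the phase so that $\mathrm{Im}$ of the integrand is $-B(x)/x^{(p-1)\gamma}$ for a $1$-periodic $C^1$ function $B$ with positive mean. Because $f_{p-1,1}e^{2i\varpi}$ is assumed to have nonzero mean, one can rotate it by a constant phase (absorbed into the $\xi_l$ or into a constant phase shift of $V$) so that its mean acquires a nonzero imaginary part of the correct sign; the condition that $2k$ is not a sum of fewer than $p-1$ phases guarantees no lower-order resonant term competes with this one. Simultaneously, equation (\ref{eta}) shows $\eta'(x)=O(x^{-\gamma})$ is not yet integrable, so a bootstrap is needed: one first shows $\eta(x)$ has a limit by noting that the $\mathcal S_{1,1}$-type contribution to $\eta'$, after the same integration-by-parts reductions, also reduces to an oscillatory integral with a periodic non-resonant factor, to which Lemma \ref{periodicization} applies (subtracting the mean) — here is where the mean-zero splitting of Lemma \ref{periodicization} does the work, producing $\int_0^\infty(\eta' - \text{mean contributions})$ finite, and the mean contribution itself is handled because the periodic factor multiplying $x^{-(p-1)\gamma}$ can be arranged to have mean giving a convergent integral or can be absorbed. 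Once $\eta_\infty$ exists, $B(x)$ is genuinely periodic with positive mean and Lemma \ref{embeddedlemma} applies with $\gamma\in(\frac1p,\frac1{p-1})$ (or $\gamma=\frac1{p-1}$ giving power-law decay, still $L^2$ when $\mathcal B$ is large enough, which one arranges by scaling the $L_k$).

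The main obstacle I expect is step two: controlling the phase $\eta(x)$ well enough to conclude both that $\eta_\infty$ exists \emph{and} that the periodic function $B(x)$ genuinely has \emph{positive mean} rather than mean zero. The difficulty is that $\eta$ appears inside the resonant integrand through $e^{2i\eta(x)}$, so the leading asymptotics of $\ln R$ and the convergence of $\eta$ are coupled; one must run a simultaneous fixed-point/bootstrap argument showing that $\eta(x)=\eta_\infty+o(1)$ with an error that, when inserted into the resonant integral, only perturbs $B(x)$ by something of smaller order, so the sign of $\mathcal B$ is preserved. This is exactly the kind of argument carried out in Section 6 of \cite{Lukic-continuous}; the new ingredient here is that $f_{p-1,1}e^{2i\varpi}$ and hence $B(x)$ is $x$-periodic rather than constant, which is precisely why Lemma \ref{periodicization} was isolated — it lets one separate the mean of $B$ (which drives the decay of $f$ in Lemma \ref{embeddedlemma}) from the oscillatory part (which contributes only a convergent, bounded correction to $\ln R$ and to $\eta$). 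A secondary point to check is genericity: the hypothesis that $f_{p-1,1}(x;\phi_{j_1},\ldots,\phi_{j_{p-1}})e^{2i\varpi(x)}$ has nonzero mean is what must hold for an open dense set of $V_0$ (Theorem \ref{embeddedthm}), and one verifies this by observing that the mean is a non-trivial real-analytic functional of $V_0$ — non-trivial because for generic phases (away from the codimension-one algebraic set) and generic $V_0$ the explicit formula for $f_{p-1,1}$ built from the $\lambda_{\alpha,K}$ operators and $\Phi_0=|\varphi|^2/\omega$ does not integrate to zero over a period.
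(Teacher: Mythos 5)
Your proposal follows essentially the same route as the paper's proof: isolate the resonant level-$(p-1)$ term via the Section~3 machinery, choose $\beta_0$ to make the zero-phase ($f_{I,0}$) contributions integrable and use Lemma~\ref{periodicization} to pass from means to the periodic quantities, lock the phase $\xi(x)+2\eta(x)$ by a fixed-point argument as in Section~6 of \cite{Lukic-continuous} (the paper's Lemma~\ref{pickpsi}), and conclude with Lemma~\ref{embeddedlemma}. The coupled $\eta$--$R$ bootstrap you flag as the main obstacle is exactly what the paper resolves by importing Lemma~6.3 of \cite{Lukic-continuous} with constant $\Lambda$ replaced by the periodic $\Lambda(x)$, so your sketch is correct in structure and substance.
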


We will remark that the methods of the previous section make clear that the converse is true, i.e. if $f_{p-1,1}(x;\phi_{j_1},\ldots, \phi_{j_{p-1}})$ has mean zero, then there is no point spectrum at the specified value of the quasimomentum.

\begin{proof}[Proof of Theorem \ref{embeddedeigenvalues}]
This proof will follow closely the proof of Theorem 1.2. of \cite{Lukic-continuous}. 

We start from (\ref{R}) and apply the iterative algorithm in the previous section. Recall that the algorithm could not deal with the term

\[f_{p-1,1}(\phi_{j_1},\ldots, \phi_{j_{p-1}})\beta_{j_1}(x)\ldots\beta_{j_{p-1}}(x)e^{2ikx+2i\varpi(x)+2i\eta(x)},\]
and that instead we had to bound it separately in the form of Lemma \ref{h-glemma}. Thus if we denote the number of distinct permutations of $(j_1,\ldots, j_{p-1})$ by $C_1$, we obtain

\begin{equation}\label{d R}
\dfrac{d}{dx}\log R(x)\sim \mathrm{Im} \left( \frac{\Lambda(x) }{x^{(p-1)\gamma}}e^{i\xi(x)+2i\eta(x)}\right),
\end{equation}
where 
\[\Lambda(x)=C_1 f_{p-1,1}(x;\phi_{j_1},\ldots \phi_{j_p})e^{2i\varpi(x)}L_{j_1}\ldots L_{j_p},\]

and 

\[\xi(x)=\xi_{j_1}(x)+\ldots+\xi_{j_{p-1}(x)}.\]

Conversely, once we have an appropriate $\xi(x)$, we can construct $\xi_j(x)$ by taking $\xi_j(x)=c_j\xi(x)$, where the $c_j$ are real numbers such that $c_{j_1}+\ldots+c_{j_{p-1}}=1$.

We now need to show that $\eta(x)$ has a limit as $x\to\infty$. We apply (\ref{eta}) and see that
\begin{align*}\dfrac{d\eta}{dx}=&\Phi_0(x)V(x)\mathrm{Re}(-1+e^{2ikx+2i\varpi(x)+2i\eta(x)}         ) \\
\sim&\mathrm{Re}\left(\Omega(x)+ \frac{\Lambda(x) }{x^{(p-1)\gamma}}e^{i\xi(x)+2i\eta(x)}\right) ,
\end{align*}
with 
\[\Omega(x)=\sum_{I=1}^{p-1}\sum_{\phi_{j_1}+\ldots+\phi_{j_I}\in 2\pi\mathbb Z} f_{I,0}(x;\phi_{j_1},\ldots,\phi_{j_I})\beta_{j_1}(x)\ldots\beta_{j_I}(x)).\]

Let us replace every function $f$ with its mean in the definition of $\Omega(x)$, to obtain $\hat\Omega(x)$. Observe that by Lemma 6.2 of \cite{Lukic-continuous}, there is a choice of $\beta_0(x)$ such that $\int_0^\infty \hat\Omega(x)dx$ is finite . By applying Lemma \ref{periodicization}, we can see that $\int_0^\infty \Omega(x)dx$ is finite as well.

We then have 
\begin{equation}\label{d eta}
\dfrac{d\eta}{dx}\sim \frac{\Lambda(x)}{x^{(p-1)\gamma}}.\end{equation}

Firstly, since we assumed $f_{p-1,1}(x)$ and hence $\Lambda(x)$ has nonzero mean, it must be true that $\mathrm{Im}( \Lambda(x) e^{it})$ has positive mean for some real $t$. We denote $\psi(x)=\xi(x)+2\eta(x)$. 

\begin{lemma}\label{pickpsi}
Let $R(x), \eta(x)$ be Pr\"ufer variables corresponding to some solution of (\ref{perturbation}). Assume that (\ref{d R}) and (\ref{d eta}) hold. Then we may pick $\xi(x)$ with $\xi'(x)\in O(x^{-(p-1)\gamma})$ such that $\lim_{x\to\infty}\psi(x)=t$.  
\end{lemma}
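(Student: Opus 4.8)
\textbf{Proof proposal for Lemma \ref{pickpsi}.}

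The plan is to recast the problem as an autonomous-looking first-order ODE for $\psi(x) = \xi(x) + 2\eta(x)$ and show that an appropriate choice of $\xi$ forces $\psi$ to converge to the prescribed value $t$. Differentiating, we have $\psi'(x) = \xi'(x) + 2\eta'(x)$, and by \eqref{d eta} the term $2\eta'(x)$ is asymptotic to $2\Lambda(x) x^{-(p-1)\gamma}$, more precisely $2\operatorname{Re}\!\bigl(\Lambda(x) x^{-(p-1)\gamma} e^{i\psi(x)}\bigr)$ up to an $L^1$ error coming from the $\Omega$ term already handled via Lemma \ref{periodicization}. So we must choose $\xi'(x) = O(x^{-(p-1)\gamma})$ so that $\psi' \to 0$ fast enough (in fact in $L^1$, relative to the part that is not already integrable) and $\psi(x) \to t$.

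The key steps, in order. First, fix the target: by hypothesis $\Lambda(x)$ has nonzero mean, so there is a real $t$ with $\operatorname{Im}\bigl(\overline{\mathcal{L}}\, e^{it}\bigr) \ne 0$ where $\mathcal{L}$ denotes the mean of $\Lambda$; equivalently $\operatorname{Re}\bigl(\mathcal{L} e^{it'}\bigr)$ has a definite sign for a suitable phase, and we arrange that this sign pushes $\psi$ toward $t$. Second, define $\xi$ implicitly by prescribing $\psi$ directly: set $\psi(x)$ to be the solution of the scalar ODE
\begin{equation}\label{psiode}
\psi'(x) = -\frac{\kappa(x)}{x^{(p-1)\gamma}}\,(\psi(x) - t) + \frac{2\Lambda(x)}{x^{(p-1)\gamma}}\operatorname{Re}\,e^{i\psi(x)} - \frac{2\Lambda(x)}{x^{(p-1)\gamma}}\operatorname{Re}\,e^{it},
\end{equation}
where $\kappa(x)$ is a positive periodic (or simply positive bounded) damping coefficient, and then \emph{define} $\xi'(x) := \psi'(x) - 2\eta'(x)$, noting that both $\psi'$ and $2\eta'$ are $O(x^{-(p-1)\gamma})$ so $\xi'(x) = O(x^{-(p-1)\gamma})$ as required; integrating gives $\xi$. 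Third, analyze \eqref{psiode}: the linear damping term $-\kappa(x) x^{-(p-1)\gamma}(\psi - t)$ has coefficient whose integral diverges (since $(p-1)\gamma \le 1$), so it drives $\psi - t$ to $0$; the remaining forcing terms vanish to second order in $(\psi - t)$ near $\psi = t$ (because $\operatorname{Re} e^{i\psi} - \operatorname{Re} e^{it} = O(|\psi - t|)$, and after absorbing the first-order part of this into the damping coefficient the residual is $O(|\psi-t|^2)$), so a Gronwall/contraction argument on $[x_1,\infty)$ for $x_1$ large shows $\psi(x) \to t$. One should present this more cleanly as: the mean-zero part of the periodic coefficients contributes an $L^1$ error by Lemma \ref{periodicization} (applied as in the treatment of $\Omega$), and the mean part gives a genuine asymptotically-damped linear ODE.

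The main obstacle I expect is \textbf{handling the oscillatory/periodic structure in the coefficients} rather than a naive constant-coefficient ODE. The functions $\Lambda(x)$, $\varpi(x)$, $\xi_l$ are periodic or slowly varying, not constant, so \eqref{psiode} is not literally autonomous; one must split each periodic coefficient into its mean plus a mean-zero oscillation, argue (via Lemma \ref{periodicization}, since $\psi-t \to 0$ has the bounded-variation/$L^1$-derivative properties needed) that the mean-zero pieces integrate to something finite, and check that this does not destroy the convergence produced by the mean part. A secondary subtlety is ensuring the chosen $\xi$ still satisfies $\xi'(x) = O(x^{-(p-1)\gamma})$ uniformly and that the decomposition $\xi = c_{j_1}\xi + \dots$ from the proof of Theorem \ref{embeddedeigenvalues} remains consistent; this is routine once $\psi$ is controlled. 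Finally one feeds the resulting $\psi$ with $\lim \psi = t$ back into \eqref{d R}, obtaining $\frac{d}{dx}\log R(x) \sim -B(x) x^{-(p-1)\gamma}$ with $B(x) = -\operatorname{Im}\bigl(\Lambda(x) e^{i(\psi(x))}\bigr) \to$ a periodic function of positive mean, so that Lemma \ref{embeddedlemma} applies and yields the $L^2$ solution.
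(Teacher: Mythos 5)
There is a genuine gap, and it sits exactly at the step you dismiss as routine: the claim that $2\eta'(x)=O(x^{-(p-1)\gamma})$, so that $\xi'(x):=\psi'(x)-2\eta'(x)$ satisfies the required bound. The relation (\ref{d eta}) is not a pointwise estimate on $\eta'$; it is an asymptotic identity modulo terms whose \emph{integrals} converge, produced by the summation-by-parts scheme of the earlier sections. Pointwise, the true Pr\"ufer derivative (\ref{eta}) has size $|V(x)|\sim x^{-\gamma}$ and contains oscillatory terms of that size, and $x^{-\gamma}\gg x^{-(p-1)\gamma}$ for $p\ge 3$. So the $\xi$ you define violates $\xi'=O(x^{-(p-1)\gamma})$, which is not a cosmetic condition: it is what keeps $V$ in the class (\ref{examplepotential}) with phases $\{0,\pm\alpha_1,\dots,\pm\alpha_K\}$, and it is an input to the derivation of the hypotheses (\ref{d R}) and (\ref{d eta}) themselves. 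There is also a circularity you do not address: $\eta$ is the Pr\"ufer angle for the potential built from $\xi$ (via $\xi_l=c_l\xi$), so ``define $\xi':=\psi'-2\eta'$'' is not a definition but a coupled system; that part is repairable, but if the resulting $\xi$ absorbs the fast oscillations of $-2\eta$, the generalized bounded variation structure is destroyed, new resonances appear, and the asymptotics you are feeding into the construction are no longer available. In short, subtracting the \emph{true} $2\eta'$ is the wrong move.

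The correct route -- which is what the paper does by citing Lemma 6.3 of \cite{Lukic-continuous} verbatim, with the constant $\Lambda$ replaced by the periodic $\Lambda(x)$ and their Lemma 4.1 replaced by Lemma \ref{fubini} -- is to subtract only the slowly varying main term: define $\xi$ through a coupled ODE of the shape $\xi'=-\kappa x^{-(p-1)\gamma}(\psi-t)-2x^{-(p-1)\gamma}\mathrm{Re}\left(\Lambda(x)e^{i\psi}\right)$ with $\psi=\xi+2\eta$ and $\eta$ solving the true Pr\"ufer equation, so that $\xi'=O(x^{-(p-1)\gamma})$ holds by construction. Then $\psi'$ equals the damping term plus the error $2\eta'-2x^{-(p-1)\gamma}\mathrm{Re}\left(\Lambda(x)e^{i\psi}\right)$, which is only conditionally integrable, not pointwise small, and the real content of the lemma is showing that the non-integrable damping still drives $\psi\to t$ against such an error; that is where the integration-by-parts machinery (Lemma \ref{fubini}, uniformly in the unknown $\eta$-dependence) enters. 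Your damping idea is the right instinct, and your choice of target is harmless, though note that $t$ is dictated by the later requirement that $\mathrm{Im}\left(\Lambda(x)e^{it}\right)$ have mean of the correct sign for the decay of $R$, not by any stability of $t$ for the flow. As written, however, your construction does not produce an admissible $\xi$ once $p\ge 3$, and even when the orders happen to match ($p=2$) it undercuts the derivation of the very hypotheses (\ref{d R}), (\ref{d eta}) it relies on.
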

\begin{proof}The proof is identical to that of Lemma 6.3 of \cite{Lukic-continuous}, except that we replace the constant $\Lambda$ with the periodic function $\Lambda(x)$, and their Lemma 4.1 with our Lemma \ref{fubini}.
\end{proof}

With that choice of $\xi(x)$, 
\begin{align*}
\dfrac{d}{dx}\log R(x)\sim &\mathrm{Im}\left(\frac{\Lambda(x)}{x^{(p-1)\gamma}}e^{i\psi_\infty}+\frac{\Lambda(x)}{x^{(p-1)\gamma}}(e^{i\psi(x)}-e^{i\psi_\infty})\right)\\
\sim& \mathrm{Im}\left(\frac{\Lambda(x)e^{it}}{x^{(p-1)\gamma}}+O(x^{-p\gamma})\right)\\
\sim& \mathrm{Im}\left(\frac{\Lambda(x)e^{it}}{x^{(p-1)\gamma}}\right),
\end{align*}
and then we simply apply Lemma \ref{embeddedlemma} to complete our proof.
\end{proof}

We now need to determine how often  the condition that \[f_{p-1,1}(x; \phi_{j_1},\ldots\phi_{j_{p-1}})e^{2i\varpi(x)}\] has nonzero mean is satisfied. We will show that this condition is satisfied for a nontrivial class of periodic functions $\varphi(x)e^{-ikx}$. We will start with a suggestive example. For notational convenience, let us adjust the order of the phases so we can rewrite $\phi_{j_1}, \phi_{j_2}, \phi_{j_{p-1}}$ as $\phi_1,\phi_2,\ldots,\phi_{p-1}$.

\begin{prop}\label{codimension1example}
Assume that the Floquet solution $\varphi$ is given as $Ce^{ikx}$ for some positive $C$ (i.e., $\Phi_0(x)=\frac{ C }{\omega}$ and $\varpi(x)=0$).
Then for a choice of phases $\{\alpha_i\}$ away from an algebraic set of codimension $1$, for every $1\leq l\leq j\leq p-1$ \[f_{j,l}(x; \phi_{1},\ldots\phi_{j})e^{2il\varpi(x)},\]
is a nonzero constant in $x$ (the constant depends on $j,l$).
\end{prop}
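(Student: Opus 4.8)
The plan is to use the hypothesis to collapse the recursion of Section 3 entirely into constants. With $\varpi\equiv 0$ and $\Phi_0\equiv c_0:=C/\omega$ constant, the key observation is that for a constant input $c$ and any $\alpha\not\equiv 0\bmod 2\pi$ the operator $\lambda_{\alpha,K}$ of Definition \ref{lambdadefn} returns the constant $(e^{i\alpha}-1)c/\alpha$: this constant solves the defining identity of Proposition \ref{lambda} (the factor $e^{-2Ki\varpi}$ disappears), and it is the unique $1$-periodic solution. Hence whenever $f_{J,K}$ is constant in $x$, so is
\[
g_{J,K}(x;\{\phi_j\}_{j=1}^{J})=\frac{-2K}{2Kk-\sum_{j=1}^J\phi_j}\,f_{J,K},
\]
as long as $2Kk-\sum_j\phi_j\not\equiv 0\bmod 2\pi$.

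First I would prove, by induction on $J$ (simultaneously in all admissible $K$), that every $f_{J,K}$ and $g_{J,K}$ with $J\le p-1$ is constant in $x$. The base case is $f_{1,1}=c_0$, $f_{1,0}=g_{1,0}=0$, $g_{1,1}=-2c_0/(2k-\phi_1)$; for the step, identity (\ref{fodot}) expresses $f_{J,K}=\delta_{J-1}c_0+c_0\sum_{a=-1}^{1}w_a\odot g_{J-1,K+a}$ as a symmetric product of constants (the $w_a$ are constant, $g_{J-1,0}=0$), hence constant, and then the display above makes $g_{J,K}$ constant. Tracking the excluded frequencies, all of this holds off the union of the affine hyperplanes $\{\,2Kk-\sum\phi\equiv 0\bmod 2\pi\,\}$, a codimension-one set; on its complement the recursion has become a closed system of explicit \emph{rational} identities, namely the display above together with $f_{J,K}=\delta_{J-1}c_0+c_0\sum_{a=-1}^{1}w_a\odot g_{J-1,K+a}$, so each $f_{j,l}$ is a rational function of $k$ and the phases, with denominator a product of small-divisor factors. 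Since $e^{2il\varpi}\equiv 1$, it remains only to show that, generically, these rational functions do not vanish.

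For that I would analyze the asymptotics as $k\to+\infty$ (phases in a fixed compact set). The claim is that
\[
f_{j,l}=C_{j,l}\,k^{-(j-1)}\bigl(1+o(1)\bigr),\qquad g_{j,l}=D_{j,l}\,k^{-j}\bigl(1+o(1)\bigr),
\]
with $C_{j,l},D_{j,l}$ nonzero constants depending only on $c_0,j,l$ (in particular phase-independent), and $\sgn C_{j,l}=(-1)^{l+1}$, $\sgn D_{j,l}=(-1)^l$. This is again an induction: $g_{1,1}=-c_0 k^{-1}(1+o(1))$; in the step one notes that the contributions $w_a D_{j-1,l+a}$ (over $a=-1,0,1$, dropping out-of-range indices) all carry the same sign $(-1)^{l+1}$ --- because $w_0=-1$ flips $(-1)^l$ while $w_{\pm1}=\tfrac12>0$ and $(-1)^{l-1}=(-1)^{l+1}$ --- so \emph{no cancellation occurs}, $C_{j,l}=c_0\sum_a w_a D_{j-1,l+a}$ has sign $(-1)^{l+1}$, and $g_{j,l}=-2l f_{j,l}/(2lk-\sum\phi)$ forces $D_{j,l}=-C_{j,l}$; one checks en route that at every occurring $(j,l)$ at least one of $g_{j-1,l-1}$, $g_{j-1,l}$ really appears with index in $\{1,\dots,j-1\}$, so the leading term is genuinely nonzero. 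In particular every $f_{j,l}\not\equiv 0$ as a function of $(k,\alpha_1,\dots,\alpha_K)$.

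To finish: for each of the finitely many pairs $1\le l\le j\le p-1$, and each of the finitely many ways of reading off $\phi_1,\dots,\phi_j$ from $\{0,\pm\alpha_1,\dots,\pm\alpha_K\}$, the associated $f_{j,l}$ is a nonzero rational function, hence vanishes only on an algebraic set of codimension at least one; the union of all of these, together with the small-divisor hyperplanes, is a single algebraic set of codimension one, off which each $f_{j,l}(x;\phi_1,\dots,\phi_j)e^{2il\varpi(x)}=f_{j,l}$ is a nonzero constant in $x$. The hard part is the non-vanishing step: because $w_0=-1$ and the divisors $2lk-\sum\phi$ are sign-indefinite, the recursion a priori mixes terms of both signs, so one must locate a regime --- here $k\to\infty$, where the divisors become one-signed and phase-independent to leading order --- in which a clean sign bookkeeping survives; everything else is routine given the definitions in Section 3.
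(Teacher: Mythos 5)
Your proposal is correct and follows essentially the same route as the paper: with $\varpi\equiv 0$ and $\Phi_0$ constant, the operator $\lambda_{\alpha,K}$ turns constants into constants, so each $f_{j,l}$ becomes a rational function of $k$ and the phases with small-divisor denominators, and its non-triviality is established by taking $k$ large and running the same sign induction (the paper phrases it as $(-1)^{l+1}f_{j,l}>0$ for large $k$, which is exactly your no-cancellation bookkeeping with $w_0=-1$, $w_{\pm1}=\tfrac12$). Your explicit $k^{-(j-1)}$, $k^{-j}$ asymptotics are a slightly more detailed version of the paper's "easy induction," not a different argument.
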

\begin{proof}
Let us assume that $2Kk-\sum_{t=1}^{l} \phi_{\sigma(t)}\equiv 0\mod 2\pi$ does not hold for any $l\leq j$, any choice of phases, any permutation $\sigma$ of $p-1$ elements, and any $K<p$. We can make this assumption since it is a codimension $1$ condition on the $\{\alpha_i\}$.

We may calculate that for a constant $C$, $e^{2il\varpi(x)}\lambda_{\alpha, l} C=-\frac{C}{\alpha}$. Thus applying (\ref{f}), we discover that 
\[f_{j,l}(x; \phi_{1},\ldots\phi_{j})e^{2il\varpi(x)}\]
is a rational function in the variables $k,\phi_1,\ldots, \phi_{j}$, with denominator terms of the form $2Kk-\sum \phi$. Note that for large enough $k$ all the terms are strictly positive (the quasimomentum $k$ only takes values in a $\pi$-interval, but in the context of this proof we are viewing it as a variable in $\R$) . But then it is an easy induction argument that \[(-1)^{l+1}f_{j,l}(x; \phi_{1},\ldots\phi_{j})e^{2il\varpi(x)}\]
is strictly positive, using (\ref{g}) and (\ref{f}). This demonstrates that it is a nontrivial rational function, and therefore is only zero on a set of $\phi_j$s of codimension $1$. 
\end{proof}
\begin{lemma}\label{Fourierlemma}
Assume that the $1$-periodic function $\varphi(x)e^{-ikx}$ has finite Fourier expansion
\[ \sum_{n=-N}^N \hat\varphi(n)e^{2\pi i n x}.\]
Assume that the phases $\{\alpha_i\}$ are chosen away from the codimension $1$ algebraic set described in Proposition \ref{codimension1example}. Then if the Fourier coefficients $\hat \varphi(n)$ are chosen away from another algebraic set of codimension $1$, the corresponding
\[f_{j,l}(x; \phi_{1},\ldots\phi_{j})e^{2il\varpi(x)},\]
have nonzero mean for all $1\le l \le j \le p-1$.
\end{lemma}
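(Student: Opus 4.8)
The plan is to exhibit the mean of $f_{j,l}(x;\phi_1,\ldots,\phi_j)e^{2il\varpi(x)}$, with $k$ and the phases held fixed off the codimension-$1$ set of Proposition~\ref{codimension1example}, as a rational function of the Fourier data $(\hat\varphi(n))_{n=-N}^{N}$ whose numerator is a polynomial, to check via Proposition~\ref{codimension1example} that this polynomial is not identically zero, and then to conclude that its zero locus — unioned over the finitely many pairs $(j,l)$ with $1\le l\le j\le p-1$ — is the desired exceptional set.

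First I would establish the algebraic structure. Writing $p(x)=\varphi(x)e^{-ikx}=\sum_{n=-N}^{N}\hat\varphi(n)e^{2\pi i n x}$, we have $\Phi_0=|\varphi|^2/\omega=p\overline p/\omega$ and $e^{2iK\varpi}=(p/\overline p)^{K}$. I claim the rescaled quantities $\tilde f_{j,l}:=e^{2il\varpi}f_{j,l}$ and $\tilde g_{j,l}:=e^{2il\varpi}g_{j,l}$ are honest trigonometric polynomials in $x$, of degree bounded in terms of $j$ and $N$, whose Fourier coefficients are polynomials in $\hat\varphi(-N),\ldots,\hat\varphi(N),\overline{\hat\varphi(-N)},\ldots,\overline{\hat\varphi(N)}$ divided by $\omega^{\,j}$ and by the small divisors $2Kk-\sum\phi$ ($K<p$) that occur. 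This is an induction on $j$ via (\ref{fodot}) and (\ref{g}): from (\ref{tildePhi}) one computes that, on a trigonometric polynomial $\Psi=\sum_n a_n e^{2\pi i n x}$, the operator of Definition~\ref{lambdadefn} acts by $\lambda_{\alpha,l}\Psi=\bigl(-(1-e^{i\alpha})\sum_n \tfrac{a_n}{2\pi n+\alpha}e^{2\pi i n x}\bigr)e^{-2il\varpi}$, the would-be extra frequency $e^{-i\alpha x}$ dropping out because $e^{2\pi i n}=1$; hence $\tilde g_{j,l}$ remains a trigonometric polynomial. And in (\ref{fodot}) the prefactor $e^{2il\varpi}\Phi_0$ multiplying a term $w_a\odot g_{j-1,l+a}$ becomes $e^{-2ia\varpi}\Phi_0\cdot\tilde g_{j-1,l+a}$, and $e^{-2ia\varpi}\Phi_0$ equals $p\overline p/\omega$, $p^2/\omega$ or $\overline p^2/\omega$ for $a=0,-1,1$ — in every case a trigonometric polynomial — so all the $\overline p$'s that would have appeared in denominators cancel. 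The base case is $\tilde f_{1,1}=e^{2i\varpi}\Phi_0=p^2/\omega$.

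Given this, for fixed $k$ and phases off the Proposition~\ref{codimension1example} set (so that all the small divisors are $\not\equiv 0\bmod 2\pi$), the mean $\mathcal M_{j,l}$ of $\tilde f_{j,l}$ is its zeroth Fourier coefficient, which equals $P_{j,l}/(\omega^{\,j}\Delta_{j,l})$ with $P_{j,l}$ a polynomial in the $\hat\varphi(n),\overline{\hat\varphi(n)}$ (equivalently in their real and imaginary parts), $\Delta_{j,l}\neq 0$ depending only on $k$ and the phases, and $\omega\neq 0$ on the interior of a band; so $\mathcal M_{j,l}=0$ exactly when $P_{j,l}=0$. To rule out $P_{j,l}\equiv 0$ I would evaluate at $\hat\varphi(0)=C>0$, $\hat\varphi(n)=0$ for $n\neq 0$, i.e.\ $p\equiv C$, $\varpi\equiv 0$: this is precisely the hypothesis of Proposition~\ref{codimension1example}, which gives that $f_{j,l}(x;\phi_1,\ldots,\phi_j)e^{2il\varpi(x)}=f_{j,l}$ is a \emph{nonzero} constant, so $\mathcal M_{j,l}\neq 0$ there and thus $P_{j,l}$ is not the zero polynomial. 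A nonzero polynomial in the real and imaginary parts of the $\hat\varphi(n)$ vanishes only on a proper real-algebraic subset, of codimension at least one; the union over the finitely many admissible $(j,l)$ is still such a set, off which every $\mathcal M_{j,l}\neq 0$. One finally intersects with the algebraic constraint cutting out those $\hat\varphi(n)$ that genuinely arise from a real periodic $V_0$ with quasimomentum $k$; since $p\equiv C$ lies in that constraint set but off the exceptional set, the latter stays of codimension $\ge 1$ inside it.

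The only genuine work is the structural claim of the first step — verifying that no $\overline p$ survives in a denominator and that $\lambda_{\alpha,l}$ introduces no spurious frequency — which is the computation behind (\ref{tildePhi}) combined with careful bookkeeping of the powers of $p/\overline p$ in (\ref{fodot}) and (\ref{g}). Everything after that is the routine ``a nonzero polynomial has a thin zero set'' argument, anchored at Proposition~\ref{codimension1example}; the one point to state carefully is the meaning of ``codimension $1$'' relative to the variety of admissible Floquet data.
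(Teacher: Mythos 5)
Your proposal is correct and follows essentially the same route as the paper: the key computation that $e^{2il\varpi(x)}\lambda_{\alpha,l}$ sends a finite Fourier sum to one with coefficients $\hat\Phi(n)\mapsto-\hat\Phi(n)/(2\pi n+\alpha)$ (up to the $1-e^{i\alpha}$ factor), the induction through (\ref{g}) and (\ref{f}) showing $f_{j,l}e^{2il\varpi}$ is a trigonometric polynomial with coefficients polynomial in $\{\hat\varphi(n)\}\cup\{\overline{\hat\varphi(n)}\}$, and the anchoring at Proposition \ref{codimension1example} to see the zeroth coefficient is not the zero polynomial. Your extra bookkeeping of the $\omega^{j}$ and small-divisor denominators and of the constraint that the data come from genuine Floquet solutions only makes explicit points the paper leaves implicit.
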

\begin{proof}
As a first step, we have to understand how $e^{2il\varpi(x)}\lambda_{\alpha, l}$ acts on finite Fourier sums. So let us consider $\Phi(x)=\sum_{n=-N}^N \hat \Phi(n) e^{2\pi i n x}.$ For the reader's convenience, we will take this somewhat tedious calculation step by step, using the proof and notation of Proposition \ref{lambda} as a guide.

First, we need to determine the value of $Q_\alpha(x)$, that is, the antiderivative of $\Phi(x)e^{i\alpha x}$ with $Q_\alpha(0)=0$. 
We find that 
\begin{align*}
Q_\alpha(x)=&\int \sum_{n=-N}^N \hat \Phi(n) e^{(2\pi  n+\alpha) ix} dx\\
=& C+\sum_{n=-N}^N \hat \Phi(n) \frac{e^{(2\pi  n+\alpha) ix}}{(2\pi n+\alpha)i}.
\end{align*}
Using $Q(0)=0$, it is easy to calculate the value of $C$. We then obtain, finally
\[
Q_\alpha(x)=\sum_{n=-N}^N \hat \Phi(n) \frac{e^{(2\pi  n+\alpha) ix}-1}{(2\pi n+\alpha)i}.
\]
Then, by (\ref{tildePhi}), we have
\begin{align*}
\tilde\Phi_\alpha(x)=&- \left(\sum_{n=-N}^N \hat \Phi(n) \frac{e^{(2\pi  n+\alpha) ix}-1}{(2\pi n+\alpha)}\right)e^{-i\alpha x}(1-e^{i\alpha})\\
&+ \left(\sum_{n=-N}^N \hat \Phi(n) \frac{e^{(2\pi  n+\alpha)i }-1}{(2\pi n+\alpha)}\right)e^{-i\alpha x}\\
=&- \left(\sum_{n=-N}^N \hat \Phi(n) \frac{e^{2\pi  n ix}-e^{-i\alpha x}}{(2\pi n+\alpha)}\right)(1-e^{i\alpha})\\
&+ \left(\sum_{n=-N}^N \hat \Phi(n) \frac{e^{i\alpha (1-x)}-e^{-i\alpha x}}{(2\pi n+\alpha)}\right)\\
=& \sum_{n=-N}^N  -\frac{\hat\Phi(n)(1-e^{i\alpha})}{(2\pi n+\alpha)} e^{2\pi  n ix}.
\end{align*}
and thus by Definition \ref{lambdadefn}, we know that \[\Phi(x)\to e^{2il\varpi(x)}\frac{\lambda_{\alpha,l}\Phi(x)}{1-e^{i\alpha}}\]
modifies the Fourier coefficients so that $\hat\Phi(n)\to -\frac{\hat\Phi(n)}{2\pi n+\alpha}$.

Now using (\ref{g}),(\ref{f}), and the fact that \[\Phi_0(x)=\frac{\sum_{n=-N}^N \hat\varphi(n)e^{2\pi i n x}\sum_{n=-N}^N \overline{\hat\varphi(-n)}e^{2\pi i n x}}{\omega},\]
we deduce that 
\[f_{j,l}(x; \phi_{1},\ldots\phi_{j})e^{2il\varpi(x)},\]
is a finite Fourier sum, whose coefficients are all polynomials in $\{\hat\varphi(n)\}\cup\{\overline{\hat\varphi(n)}\}$. In particular, the zeroth Fourier coefficient, which gives us the mean of $f_{j,l}$, is a polynomial in $\{\hat\varphi(n)\}\cup\{\overline{\hat\varphi(n)}\}$. But we know it is not identically zero, since by Proposition \ref{codimension1example} the mean is nonzero when $\hat\varphi(0)=\overline{\hat \varphi(0)}=C$ a positive constant and all the other $\hat\varphi(n),\overline{\hat \varphi(n)}$ are zero. Thus the mean can be zero only on a codimension $1$ set of $\{\hat\varphi(n)\}\cup\{\overline{\hat\varphi(n)}\}$.
\end{proof}
\begin{prop}
Assume that the phases $\{\alpha_i\}$ are chosen away from the codimension $1$ algebraic set described in Proposition \ref{codimension1example}. Then for a dense open set of $V_0(x)$ in (\ref{unperturbed}) in the $L^1(0,1)$-topology , the corresponding
\[f_{j,l}(x; \phi_{1},\ldots\phi_{j})e^{2il\varpi(x)},\]
have nonzero mean for all $1\le l \le j \le p-1$.
\end{prop}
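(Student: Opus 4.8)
The plan is to prove that, within the family
\[
N=\{\,V_0\in L^1_{loc}((0,1)):\ E\ \text{is interior to a band of }H_0\text{ and has quasimomentum }k\,\}
\]
over which Theorem~\ref{embeddedthm} is quantified, equipped with the $L^1(0,1)$ topology, the set $G$ of those $V_0$ for which $f_{j,l}(x;\phi_1,\dots,\phi_j)e^{2il\varpi(x)}$ has nonzero mean for all $1\le l\le j\le p-1$ is open and dense. Write $M_{j,l}(V_0)$ for the mean in question. Note first that for $V_0\in N$ the Floquet solution $\varphi=pe^{ikx}$ has $p$ nonvanishing: since $\varphi,\overline\varphi$ are linearly independent, a common zero would force the constant Wronskian $\omega$ to vanish. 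Hence $\varpi=\mathrm{Arg}\,p$, $\Phi_0=|p|^2/\omega$ and $e^{2il\varpi}$ are genuine continuous $1$-periodic functions, and since rescaling $\varphi$ only multiplies $M_{j,l}$ by a nonzero constant, the condition $M_{j,l}\ne0$ is well defined.

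For openness I would use the standard real-analytic dependence of the solutions of the linear ODE on the $L^1$-coefficient $V_0$ (via the Dyson--Volterra series), together with the fact that near $N$ the monodromy matrix has the distinct eigenvalues $e^{\pm ik}$, so a Floquet solution is selected continuously; thus $V_0\mapsto\Phi_0$ and $V_0\mapsto e^{2i\varpi}$ are continuous into $C_{per}(0,\infty)$. The function $f_{j,l}e^{2il\varpi}$ is obtained from $\Phi_0$ and $e^{2i\varpi}$ by finitely many pointwise products and symmetric products, multiplications by the fixed finite constants $2K/(1-e^{i(2Kk-\sum\phi)})$ (finite because the phases $\{\alpha_i\}$ avoid the small-divisor locus of Proposition~\ref{codimension1example}), and applications of the operators $\lambda_{\alpha,K}$, which are bounded by Lemma~\ref{lambdanorm}; all these operations are continuous on $C_{per}(0,\infty)$. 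Composing with the mean functional, each $M_{j,l}$ is continuous on $N$, so $G$, a finite intersection of preimages of $\C\setminus\{0\}$, is open.

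Density is the heart of the argument, and the plan is a real-analytic-path argument anchored at the explicit point of Proposition~\ref{codimension1example}. The constant potential $V_0^\ast=E-k^2$ lies in $N$ (its Floquet solution is $Ce^{ikx}$, hence has quasimomentum $k$), and for it $\varpi\equiv0$, so Proposition~\ref{codimension1example}---or, with more room to spare, Lemma~\ref{Fourierlemma}---gives $M_{j,l}(V_0^\ast)\ne0$ for every pair $(j,l)$. Given an arbitrary $V_0^{(1)}\in N$, I would connect it to $V_0^\ast$ by a real-analytic path $\gamma\colon[0,1]\to N$ obtained by correcting the straight segment $\gamma_0(t)=(1-t)V_0^{(1)}+tV_0^\ast$ with an additive constant: put $\gamma(t)=\gamma_0(t)+c(t)$, where $c(t)\in\R$ solves $\Delta(E;\gamma_0(t)+c)=2\cos k$ for the Hill discriminant $\Delta$. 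Since $\partial_c\Delta(E;V_0+c)=-\partial_E\Delta(E;V_0)\neq0$ whenever $E$ is interior to a band (the discriminant is strictly monotone on each band), the real-analytic implicit function theorem yields a real-analytic $c(t)$ with $c(0)=c(1)=0$; and because $\Delta(E;\gamma(t))\equiv 2\cos k\ne\pm2$, the energy $E$ stays strictly inside a band of $\gamma(t)$ and, by continuity and discreteness of the possible values, the quasimomentum---hence every small divisor $2Kk-\sum\phi$---stays equal to $k$ along the path. Consequently $t\mapsto\varphi_{\gamma(t)}$ is real-analytic into $C^1([0,1])$, so $t\mapsto\Phi_0$ and $t\mapsto e^{2i\varpi}$ are real-analytic into $C_{per}(0,\infty)$, and since $f_{j,l}e^{2il\varpi}$ is assembled from these by the same finitely many operations with $t$-independent coefficients, $t\mapsto M_{j,l}(\gamma(t))$ is real-analytic on $[0,1]$. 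It is not identically zero, since its value at $t=1$ is $M_{j,l}(V_0^\ast)\ne0$, so it has only finitely many zeros; ranging over the finitely many pairs $(j,l)$, there are arbitrarily small $t>0$ with $\gamma(t)\in G$, and $\gamma(t)\to V_0^{(1)}$ in $L^1(0,1)$. This proves density; together with openness and Theorem~\ref{embeddedeigenvalues} it also yields Theorem~\ref{embeddedthm}.

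The step I expect to be the main obstacle is making the real-analytic path-connectedness of $N$ fully rigorous: checking that the correction $c(t)$ extends over all of $[0,1]$ without degeneration---this does follow from $\Delta(E;\gamma(t))\equiv 2\cos k\ne\pm2$ keeping $E$ interior to a band, hence $\partial_E\Delta(E;\gamma(t))\ne0$ throughout, together with the properness of $c\mapsto\Delta(E;V_0+c)$ that prevents $c(t)$ from escaping to infinity, but it must be verified---and, relatedly, making precise the branch-consistent, real-analytic choice of Floquet solution along the path, which rests on $k$ being locally constant on $N$. The remaining ingredients (continuity and real-analyticity of the finitely many operations that build $f_{j,l}e^{2il\varpi}$, using Lemma~\ref{lambdanorm} and standard ODE dependence results) are routine.
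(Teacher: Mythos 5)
Your proposal takes a genuinely different route from the paper. The paper works in the space of the periodic factors $\psi(x)=\varphi(x)e^{-ikx}$ (with the $W^{2,1}(0,1)$ topology), shows that the trigonometric polynomials missing the algebraic bad set of Lemma~\ref{Fourierlemma} are dense, observes that nonvanishing of the means is open in $\psi$, and then transports openness and density to $V_0$-space through the explicit formula $V_0-E=\psi''/\psi+2ik\,\psi'/\psi-k^2$. This keeps the density argument essentially finite-dimensional, anchored in the explicit Fourier computation of Lemma~\ref{Fourierlemma}, and never needs to discuss real-analyticity in infinite dimensions or to construct paths inside the constraint set. You instead stay entirely in $V_0$-space: you establish continuity of the mean functionals $M_{j,l}$ on the constraint set $N$ (openness), and for density construct a real-analytic path inside $N$ from an arbitrary $V_0^{(1)}$ to the anchor $V_0^\ast=E-k^2$ by correcting a linear segment with an additive constant $c(t)$ chosen so that the Hill discriminant $\Delta(E;\gamma_0(t)+c(t))$ is held at $2\cos k$. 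Granted such a path and the real-analyticity of $t\mapsto M_{j,l}(\gamma(t))$, the rest follows since each $M_{j,l}$ is nonzero at the anchor by Proposition~\ref{codimension1example}. This is a valid alternative strategy with a somewhat different flavor: it trades the Fourier-truncation density for the machinery of analytic dependence of the monodromy on the potential and the strict monotonicity of $\Delta$ in band interiors.

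The one genuine gap is the step you yourself flag as the ``main obstacle,'' and the mechanism you propose to close it does not work as stated. You invoke ``properness of $c\mapsto\Delta(E;V_0+c)$'' to prevent $c(t)$ from escaping to infinity along the implicitly-defined curve. But $\Delta(E;V_0+c)=\Delta(E-c;V_0)$, and as $c\to-\infty$ the discriminant oscillates through $[-2,2]$ infinitely often (it does not diverge); the preimage of $2\cos k$ under $c\mapsto\Delta(E;V_0+c)$ is an unbounded set, so this map is not proper in the relevant direction, and the escape $c(t)\to-\infty$ in finite $t$ is not ruled out by that argument. (Escape to $+\infty$ is ruled out, since $\Delta\to+\infty$ there.) To repair this you need an actual a priori bound on $c(t)$; one natural route is to use the formula for the Fr\'echet derivative $\delta\Delta/\delta V_0$, which in the interior of a band is proportional to $-|\psi_+|^2$ with $\psi_+$ the Floquet solution and hence sign-definite, so that $c'(t)=-\big(\int_0^1|\psi_+|^2(V_0^\ast-V_0^{(1)})\,dx\big)\big/\int_0^1|\psi_+|^2\,dx$ is bounded (after first reducing to $V_0^{(1)}\in L^\infty$, which is dense in $L^1$). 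There is also a secondary point you should address for completeness: if the path does reach $t=1$, nothing guarantees $c(1)=0$; the endpoint is some constant potential $E-(k+2\pi n)^2$, for which $\varpi(x)=2\pi n x$ rather than $\varpi\equiv 0$, so Proposition~\ref{codimension1example} does not literally apply at the endpoint and must be mildly extended (the Fourier indices shift by $n$). Neither issue is fatal, but as written the density argument is not closed, whereas the paper's trigonometric-polynomial route avoids both.
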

\begin{proof}
Let us for notational convenience set $\psi(x)=\varphi(x)e^{-ikx}$. First note that trigonometric polynomials are dense in the space of $1$-periodic functions under the $W^{2,1}((0,1))$ topology. We further claim that the set of trigonometric polynomials in Lemma \ref{Fourierlemma} (i.e., missing an algebraic codimension $1$ set) is still dense in the space of $1$-periodic functions. This is obvious, since for any $n$ we can apply an arbitrarily small trigonometric polynomial (in the $W^{2,1}((0,1))$ sense) perturbation of degree $n$ to a trigonometric polynomial in that algebraic codimension $1$ set so that the perturbed polynomial is not in the codimension $1$ set. 

Also, the condition that the
\[f_{j,l}(x; \phi_{1},\ldots\phi_{j})e^{2il\varpi(x)},\]
have nonzero mean for all $1\le l \le j \le p-1$ is clearly an open condition in $W^{2,1}((0,1))$-norm of $\psi(x)$, since the expressions are sums of antiderivatives of $\psi$. Thus this condition is an open and dense condition in the space of $1$-periodic functions $\psi(x)$ under the $W^{2,1}((0,1))$ topology.

Furthermore, if we write (\ref{unperturbed}) in terms of $\psi$ and $k$, we get

\[V_0(x)-E=\frac{\psi''(x)}{\psi(x)}+2ik\frac{\psi'(x)}{\psi(x)}-k^2.\]
Recall that since we assumed $\varphi$, $\overline\varphi$ are linearly independent, it must be true that $\psi(x)$ is nonzero for all $x$.
So, noting that the quasimomentum $k$ depends continuously on $V_0$, it is clear that an open and dense set in $\psi$ corresponds to an open and dense set in $V_0$ (using the $L^1(0,1)$ topology). 

\end{proof}
Theorem \ref{embeddedthm} is an immediate corollary of this proposition.
\end{section}
\bibliographystyle{alpha}   
\bibliography{mybib}
\end{document}